\newcommand{\mc}{\mathcal}
\newcommand{\pt}{\partial}
\newcommand{\br}{\mathbb{R}}
\renewcommand{\(}{\left(}
\renewcommand{\)}{\right)}
\renewcommand{\[}{\left[}
\renewcommand{\]}{\right]}
\newcommand{\na}{\nabla}
\newtheorem{thm}{Theorem}
\newtheorem{lem}[thm]{Lemma}
\newtheorem{prop}[thm]{Proposition}
\newtheorem{remark}[thm]{Remark}
\def\be{\begin{equation}}
\def\ee{\end{equation}}
\def\bea{\begin{eqnarray}}
\def\eea{\end{eqnarray}}
\newcommand{\ds}{\displaystyle}
\numberwithin{thm}{section}
\numberwithin{equation}{section}
\newcommand{\RR}{\mathbb{R}}
\renewcommand{\div}{\operatorname{div}}
\newcommand*\di{\mathop{}\!\mathrm{d}}
\title{Global strong solutions in $\RR^3$ for ionic Vlasov-Poisson systems}
\author{
Megan Griffin-Pickering 
  \thanks{Durham University, Department of Mathematical Sciences, Lower Mountjoy, Stockton Road, Durham DH1 3LE, UK. \newline Email: \textsf{megan.k.griffin-pickering@durham.ac.uk}}
  \and
Mikaela Iacobelli
  \thanks{ETH Zurich, Department of Mathematics, Ramistrasse 101, 8092 Zurich, Switzerland.\newline Email: \textsf{mikaela.iacobelli@math.ethz.ch}}
}
\begin{document}
\maketitle

\begin{abstract}
Systems of Vlasov-Poisson type are kinetic models describing dilute plasma.
The structure of the model differs according to whether it describes the electrons or positively charged ions in the plasma.
In contrast to the electron case, where the well-posedness theory for Vlasov-Poisson systems is well established, the well-posedness theory for ion models has been investigated more recently.
In this article, we prove global well-posedness for two Vlasov-Poisson systems for ions, posed on the whole three-dimensional Euclidean space $\mathbb{R}^3$, under minimal assumptions on the initial data and the confining potential.

\end{abstract}

\section{Introduction}

In this article, we investigate the well-posedness theory of a kinetic model for the ions in a dilute plasma. Plasma is an ionised gas, which forms when an electrically neutral gas is subjected to a high temperature or a strong electromagnetic field. This causes the gas particles to dissociate: electrons split apart from the rest of the gas particle.
A plasma therefore contains two distinguished types of charged particle: negatively charged electrons and positively charged ions. 

The Vlasov-Poisson system is a well established kinetic model used to describe plasma. 
The version of the system that has been most widely discussed in the mathematics literature is a model for the electrons in the plasma, evolving against a background of ions that is presumed to have a given stationary distribution. This model takes the following form:
\be \label{eq:VP}
(VP) : = 
\begin{cases} \ds 
\partial_t f_e + v \cdot \nabla_x f_e + \frac{q_e}{m_e} E \cdot \nabla_v f_e = 0, \\ \ds
\nabla_x \times E = 0, \quad
\epsilon_0 \nabla_x \cdot E = q_i \rho[f_i] + q_e \rho[f_e] , \\ \ds
\rho[f_e](t,x) : = \int_{\RR^d} f_e(t,x,v) \di v  ,\\ \ds
f_e(0,x,v) = f_{e,0}(x,v) \geq 0 .
\end{cases}
\ee
Here $f_e(t,x,v)$ represents the phase-space density of electrons, $q_e$ and $q_i$ denote respectively the charge on each electron and each ion, $m_e$ is the mass of an electron, $\epsilon_0$ is the vacuum permittivity, and $\rho[f_i](x)$ denotes the spatial density of ions which is assumed to be given and independent of time. The assumption that the ion distribution is stationary is justified by the fact that the mass of an ion is typically much greater than the mass of an electron. It is therefore common to make the approximation that the ions are stationary and uniformly distributed.

In this article, we instead consider a Vlasov-Poisson type system, extensively used in physics, describing the ions in a plasma. In analogy with the electron model \eqref{eq:VP}, we consider a system of the form
\be \label{eq:VP-ions-physical}
\begin{cases} \ds 
\partial_t f_i + v \cdot \nabla_x f_i + \frac{q_i}{m_i} E \cdot \nabla_v f_i = 0, \\ \ds
\nabla_x \times E = 0, \quad
\epsilon_0 \nabla_x \cdot E = q_i \rho[f_i] + q_e \rho[f_e] , \\ \ds
\rho[f_i](t,x) : = \int_{\RR^d} f_i(t,x,v) \di v  ,\\ \ds
f_i(0,x,v) = f_{i,0}(x,v) \geq 0 .
\end{cases}
\ee
To complete this model, it is necessary to specify the electron distribution $\rho[f_e]$. A widely used assumption is that the electrons are in thermal equilibrium. This is justified by the fact that the electrons are relatively very light and so fast moving, with a significant collision frequency. Thus the equilibrium distribution is a Maxwell-Boltzmann law of the form
\be
\rho[ f_e ] \sim e^{- \beta_e q_e \Phi},
\ee
where the ambient electrostatic potential $\Phi$ is defined to be a function such that $E = - \nabla_x \Phi$, while $\beta_e$ denotes the inverse electron temperature.

After an appropriate rescaling, this choice of electron distribution results in the following system:
\be \label{eq:VP-ions}
\begin{cases} \ds 
\partial_t f + v \cdot \nabla_x f + E \cdot \nabla_v f = 0, \\ \ds
E = - \nabla_x U, \quad
\Delta U = A(t, U) \, e^U - \rho[f], \\ \ds
\rho[f](t,x) : = \int_{\RR^d} f(t,x,v) \di v  ,\\ \ds
f(0,x,v) = f_{0}(x,v) \geq 0,  \int_{\br^{2d}} f_{0} \di x \di v=1 .
\end{cases}
\ee
Here $A(t, U) > 0$ is a scaling term in the electron distribution, which we will discuss further below.

It is natural to include a further spatial confinement of the electrons, using an external potential. That is, we assume that the electrons are also subject to a given external potential $H$. Their thermal equilibrium is then of the form
\be
\rho[ f_e ] \sim e^{- H + U} = g e^U,
\ee
where the function $g: \RR^d \to [0, + \infty)$ is defined by $g : = e^{-H}$. We assume throughout the paper a minimal condition on $g,$ namely that $g$ is fixed and belongs to the space $L^1 \cap L^\infty(\RR^d)$.

We consider the two most natural versions of the Vlasov-Poisson system for ions. These differ based on the choice of the scaling $A$.
Choosing $A=1$ results in the following system:
\begin{equation}
\label{eq:vpme-var}
(VPME)_V := \left\{ \begin{array}{ccc}\pt_t f+v\cdot \nabla_x f+ E\cdot \nabla_v f=0,  \\
E=-\nabla U, \\
\Delta U= g e^{U}- \rho_f,\\
f\vert_{t=0}=f_{0}\ge0,\ \  \int_{\br^{2d}} f_{0}\,dx\,dv=1.
\end{array} \right.
\end{equation}
Note that for solutions of \eqref{eq:vpme-var}, the total charge is not necessarily conserved and the system therefore may not be globally neutral at all times. An alternative choice is to enforce global neutrality. For this $A$ must be chosen to normalise the electron distribution, that is,  $A = \left ( \int_{\br^d} g e^{U} \di x\right )^{-1}$, which results in the following alternative system:
\begin{equation}
\label{eq:vpme-fixed}
(VPME)_F := \left\{ \begin{array}{ccc}\pt_t f+v\cdot \nabla_x f+ E\cdot \nabla_v f=0,  \\
E=-\nabla U, \\
\Delta U= \frac{g e^{U}}{\int_{\br^d} g e^{U} \di x} - \rho_f,\\
f\vert_{t=0}=f_{0}\ge0,\ \  \int_{\br^{2d}} f_{0}\,dx\,dv=1.
\end{array} \right.
\end{equation}

Both systems are usually referred to as the Vlasov-Poisson system \textit{with massless electrons}, abbreviated to VPME. This refers to the fact that these systems can be derived from a coupled system of ions and electrons in the limit of `massless electrons', in which the ratio of the electron and ion masses, $\frac{m_e}{m_i}$, tends to zero.
For example, Bardos, Golse, Nguyen and Sentis \cite{BGNS18} discuss this limit for coupled kinetic systems of the form
\be \label{eq:VP-coupled}
\begin{cases}
\partial_t f_i + v \cdot \nabla_x f_i + \frac{q_i}{m_i} E \cdot \nabla_v f_i = 0, \\
\partial_t f_e + v \cdot \nabla_x f_e + \frac{q_e}{m_e} E \cdot \nabla_v f_e = C(m_e) Q(f_e), \\
\nabla_x \times E = 0, \quad
\epsilon_0 \nabla_x \cdot E = q_i \rho[f_i] + q_e \rho[f_e] .
\end{cases}
\ee
In the equation above, $Q$ represents a collision operator such as a Boltzmann or BGK operator.
Under the assumption that sufficiently regular solutions of this system exist, they identify that in the limit the electrons indeed assume a Maxwell-Boltzmann law, leading to a model for the ions that is similar to \eqref{eq:vpme-var}, but with a time-dependent electron temperature. Also, for such a model, they prove a global well-posedness result on the torus.

Systems of the form \eqref{eq:VP-ions} have been used in astrophysics literature, for example in studies of the expansion of plasma into vacuum \cite{Medvedev2011}, numerical investigations of the formation of ion-acoustic shocks \cite{Mason71, SCM} and of the phase-space vortices that form behind these shocks \cite{BPLT1991}. 

In this article, we consider the well-posedness of both \eqref{eq:vpme-var} and \eqref{eq:vpme-fixed}.
We remark that the well-posedness theory for Vlasov-Poisson-type systems heavily depends on the dimension $d$ in which the problem is posed and on the boundary conditions imposed on the system. Two frequently considered boundary conditions are the periodic case, in which the system is posed on the $d$-dimensional flat torus, and the whole space case, in which the problem is posed on all of $\RR^d$ with a condition that $f$ and $E$ decay at infinity.

\begin{remark}
Note that for the Vlasov-Poisson system for ions on the torus, the external confining potential $H$ is not typically used (in other words, $g \equiv 1$). Moreover one may take $A=1$ without loss of generality, since changing $A$ corresponds to adding a constant to $U$. On the torus, the Poisson equation
\be
\Delta U = h
\ee
has a solution only if $h$ has total integral zero; it follows that if a solution of \eqref{eq:vpme-var} on the torus exists, it must necessarily be globally neutral at all times. Thus on the torus there is no distinction between the system \eqref{eq:vpme-var} with variable total charge and the system \eqref{eq:vpme-fixed} with fixed total charge.
\end{remark}

In one dimension ($d=1$), global well-posedness for VPME was proved by Han-Kwan and the second author \cite{IHK1}.
In dimension $d=3$, Bouchut \cite{Bouchut} proved that global weak solutions exist for both systems \eqref{eq:vpme-var} and \eqref{eq:vpme-fixed} on the whole space.
In a recent work \cite{IGP-WP}, the authors proved global well-posedness for the Vlasov-Poisson system for ions in dimension $d=2$ and $d=3$ in the periodic case, i.e. when the problem is posed on the flat torus, with $g \equiv 1$. However, a similar well-posedness result was not previously available for the whole space case. This is the goal of this work.

To make a parallel, the classical Vlasov-Poisson system for electrons is known to be globally well-posed in dimension $d \leq 3$.
Global existence of classical solutions in the whole space or on the torus was shown in, for example, \cite{Ukai-Okabe, Pfaffelmoser, Lions-Perthame, Batt-Rein}. Uniqueness for solutions with bounded density was proved by Loeper \cite{Loeper}.
For a more detailed account of the development of this theory, see for example \cite{WP-proceedings}.

\subsection{Main Result}

The main result of this paper is a global well-posedness result for the VPME systems \eqref{eq:vpme-var} and \eqref{eq:vpme-fixed} in $\RR^3$, under minimal assumptions on the initial data and the confining potential $g$.
To state the main theorem, we first define the energy functionals associated to each of the systems \eqref{eq:vpme-var} and \eqref{eq:vpme-fixed}. Each of these functionals is conserved by sufficiently regular solutions of the associated system.
For system \eqref{eq:vpme-var} where the total charge is variable, we use the following functional:
\be
\mc{E}_V [f] : = \int_{\br^3 \times \br^3} |v|^2 f \di x \di v + \int_{\br^3} |E|^2 \di x + 2 \int_{\br^3} (U - 1) g e^{U} \di x.
\ee
For system \eqref{eq:vpme-fixed}, with fixed total charge, we use
\be
\mc{E}_F [f] : = \int_{\br^3 \times \br^3} |v|^2 f \di x \di v + \int_{\br^3} |E|^2 \di x + 2 \int_{\br^3} \phi g e^{\phi} \di x,
\ee
where
\be
\phi = U - \log{\left ( \int_{\br^3} g e^U \di x \right )} .
\ee
The following theorem is the main result of this article.

\begin{thm}[Global well-posedness]
\label{thm:wp}
Let ${f_0 \in L^1\cap L^\infty(\br^3 \times \br^3)}$ be a probability density 
satisfying
$$
\int_{\br^3\times \br^3}|v|^{m_0}f_0(x,v)\,dx\,dv <+\infty \,\,\,\text{ for some $m_0>6$},\qquad f_0(x,v)\leq \frac{C}{(1+|v|)^r} \,\,\,\text{ for some $r>3$}.
$$
Assume that $g\in L^1\cap L^\infty(\br^3)$, with $g\geq 0$ satisfying $\int_{\br^3}g=1$, and that 
$\mc{E}_V[ f_0] \leq C$ (resp.  $\mc{E}_F[ f_0] \leq C$). Then there exists a unique solution ${f\in L^\infty([0,T] ; L^1\cap L^\infty(\br^3 \times \br^3))}$ of  \eqref{eq:vpme-var}  (resp. \eqref{eq:vpme-fixed}) with initial datum $f_0$ such that $\rho_f \in L^{\infty}([0,T] ; L^\infty(\br^3))$.
\end{thm}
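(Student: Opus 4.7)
The plan is to carry over to $\br^3$ the three-step scheme developed on the torus in \cite{IGP-WP}, handling the loss of compactness by exploiting the confining weight $g$. The three ingredients are: (i) a nonlinear elliptic analysis of the modified Poisson equation $\Delta U = g e^U - \rho_f$ (resp.\ its fixed-charge variant); (ii) a Lions--Perthame type propagation of velocity moments culminating in an $L^\infty_{t,x}$ bound on $\rho_f$; (iii) a Loeper-type Wasserstein stability estimate for uniqueness. The whole analysis hinges on the splitting $U = \bar U + \wt U$, where $\bar U = \tfrac{1}{4\pi|\cdot|} \ast \rho_f$ is the standard Coulomb potential and $\wt U$ solves a purely nonlinear equation; the nonlinear correction $\wt U$ turns out to be substantially smoother than $\bar U$, which is the key structural feature that enables every subsequent step.

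\textbf{Construction of the nonlinear part.} For a non-negative $\rho \in L^1 \cap L^\infty(\br^3)$ with unit mass, $\bar U$ is bounded by splitting the convolution with $1/|x|$ on $|y| \le 1$ and $|y| \ge 1$ using the $L^\infty$ and $L^1$ controls on $\rho$. The correction $\wt U$ then solves
\be
\Delta \wt U = g\, e^{\bar U + \wt U},
\ee
which is the Euler--Lagrange equation of the strictly convex and coercive functional
\be
\mc{J}(w) = \int_{\br^3} \left( \tfrac12 |\nabla w|^2 + g\, e^{\bar U + w} \right) \di x.
\ee
Coercivity uses $g \in L^1 \cap L^\infty$; elliptic bootstrap then yields $\wt U \in L^\infty \cap W^{2,p}_{\mathrm{loc}}$ with quantitative bounds in terms of $\|\rho\|_{L^1 \cap L^\infty}$ and $\|g\|_{L^1 \cap L^\infty}$. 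The fixed-charge case \eqref{eq:vpme-fixed} is handled analogously after replacing $g e^U$ by $g e^U / \int g e^U$.

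\textbf{Existence of strong solutions.} Strong solutions are built through an approximation scheme: mollify $f_0$, iterate along the characteristics of a truncated field, and pass to the limit. The driving a~priori estimates are energy conservation, which is well-posed because $(s-1)e^s \ge -1$ (resp.\ $s e^s \ge -e^{-1}$) and thus bounds $\|E\|_{L^2}$ and $\int g e^U$ from above, together with propagation of velocity moments of order up to $m_0$. Because $\nabla \wt U$ is more regular than $\nabla \bar U$, the total force $E = -\nabla U$ has the same singular structure as in the classical Vlasov--Poisson system, so the Lions--Perthame scheme \cite{Lions-Perthame} can be run to control the maximal velocity support of $f$; this is precisely the role of the initial hypotheses $m_0 > 6$ and $f_0(x,v) \le C(1+|v|)^{-r}$ with $r > 3$, which ensure enough time-integrability of $\|\rho_f\|_\infty$ to close the bootstrap.

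\textbf{Uniqueness and main obstacle.} Uniqueness in the class of bounded-density solutions follows from a Loeper-type argument \cite{Loeper}: estimating the second-order Wasserstein distance $W_2(f_1(t), f_2(t))$ leads to a differential inequality of the form
\be
\tfrac{\di}{\di t} W_2^2(f_1, f_2) \lesssim W_2^2(f_1, f_2) \log\!\left(\tfrac{1}{W_2^2(f_1, f_2)}\right),
\ee
obtained by decomposing $E_1 - E_2 = -\nabla(\bar U_1 - \bar U_2) - \nabla(\wt U_1 - \wt U_2)$, bounding the first term via the classical Loeper interpolation inequality, and bounding the second via the monotonicity of $s \mapsto e^s$ and the improved regularity of $\wt U$ from step~(i); Osgood's lemma then forces $W_2(f_1,f_2) \equiv 0$. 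The main obstacle is expected to be the elliptic step~(i) itself: on the torus the exponential term is immediately subcritical, whereas on $\br^3$ one must simultaneously manage the slow decay of $\bar U$ at infinity and the integrability of $g\, e^{\bar U + \wt U}$. The hypothesis $g \in L^1 \cap L^\infty$ with unit mass is precisely what keeps $\mc{J}$ coercive and its minimiser integrable, and every subsequent quantitative estimate relies on this construction.
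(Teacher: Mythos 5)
Your overall scheme (the splitting $U=\bar U+\widehat U$, a variational construction of the smooth correction, Lions--Perthame moment propagation plus the decay hypothesis $r>3$ to get $\rho_f\in L^\infty$, and a Loeper-type $W_2$ stability estimate with an Osgood/log-Gronwall closure) is exactly the route the paper takes for the variable-charge system \eqref{eq:vpme-var}, and that part of your proposal is sound.

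The genuine gap is your treatment of the fixed-charge system \eqref{eq:vpme-fixed}, which you dismiss as ``handled analogously after replacing $g e^U$ by $g e^U/\int g e^U$.'' It is not analogous, and this is precisely where the whole-space problem departs from both the torus case and the variable-charge case. First, for existence of $\widehat U$: the natural functional associated with the normalised nonlinearity, $J_F[h]=\int_{\br^3}|\nabla h|^2\,\di x+\log\bigl(\int_{\br^3} g e^{\bar U+h}\,\di x\bigr)$, is not bounded below on $\dot W^{1,2}(\br^3)$, so the direct method you invoke (convexity plus coercivity) does not apply; the paper has to minimise a truncated functional $J_K$ in which the logarithm is replaced by a smooth nondecreasing cutoff $L_K$, derive a $K$-independent bound $\|\Delta h^{(K)}\|_{L^1}\le 1$ hence $\|h^{(K)}\|_{L^{3,\infty}}\le C$, and then use Bouchut's lower bound $\int g e^{-|U|}\ge C e^{-C\|U\|_{L^{3,\infty}}\|g\|_{L^\infty}^{1/3}}$ (Lemma~\ref{lem:Bouchut-Marc}) to show the truncation is inactive for $K$ large, so that the minimiser of $J_K$ solves the true equation. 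Second, for uniqueness: your stability step relies on ``the monotonicity of $s\mapsto e^s$,'' but the normalised nonlinearity $h\mapsto g e^{\bar U+h}/\int g e^{\bar U+h}$ is \emph{not} monotone in $h$, so testing the difference equation with $\widehat U_1-\widehat U_2$ does not directly give a sign. The paper needs a separate estimate (Lemma~\ref{lem:fixed-stability}) which rewrites the difference in terms of $\widehat U_i+\phi_i-\log m_i$ with $m_i=\int g e^{\widehat U_i+\phi_i}$, uses that the zero-mean term drops out, and again invokes the Bouchut-type lower bound on $m_i$ to control the constants. Without these two ingredients your argument does not cover the ``(resp.\ \eqref{eq:vpme-fixed})'' half of the theorem.
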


\begin{remark}
We refer to the class of solutions constructed here -- bounded distributional solutions $f$ of 
\eqref{eq:vpme-var} and \eqref{eq:vpme-fixed} whose density $\rho_f$ is uniformly bounded -- as {\it strong} solutions.

Uniqueness holds for this class of solutions, as we prove below in Theorem~\ref{thm:uniq}.
Moreover, for strong solutions the electric field has at least a log-Lipschitz regularity, and so the associated characteristic flow is well-defined.
In particular it is possible to show that, if the initial datum is additionally $C^1$, then the corresponding solution is also $C^1$. 
Thus our global existence of strong solutions (Theorem~\ref{thm:wp}) also includes, as a byproduct, the global existence of classical solutions.
\end{remark}

\begin{remark}
Instead of assuming $f_0(x,v)\leq \frac{C}{(1+|v|)^r}$ for some $r>3$, one can replace this hypothesis with assumption (10) in \cite[Corollary 3]{Lions-Perthame}.
\end{remark}

\begin{remark}
Our result is essentially optimal in terms of the assumptions.
Indeed, as shown in  \cite[Equation 16]{Lions-Perthame}, controlling moments of order larger than $6$ is needed to guarantee that our solution is strong (i.e., $\rho_f \in L^{\infty}([0,T] ; L^\infty(\br^3))$).
Also, the boundedness of $g$ is needed to ensure that the electric field enjoys at least a log-Lipschitz regularity, so that characteristics exist and are unique.
\end{remark}

\subsection{Strategy}

The first step in the proof is to obtain regularity and stability estimates on the electrostatic potential $U.$ This is carried out in Section~\ref{sec:electric}. This provides the necessary tools to complete the proof of well-posedness.
This is done in two stages. In Section~\ref{sec:uniqueness} we prove uniqueness for solutions with bounded density.
Then, in Section~\ref{sec:moments}, we show the global existence of strong solutions.

\subsubsection{Analysis of the Electrostatic Potential}

The analysis of the VPME systems \eqref{eq:vpme-var} and \eqref{eq:vpme-fixed} hinges on an understanding of the electrostatic potential $U$.

We require two kinds of estimates. On the one hand, we prove regularity estimates that give quantitative bounds on $U$ given certain $L^p$ bounds on $\rho_f$. On the other hand, we prove stability estimates, controlling the distance between the electric fields for two solutions of the VPME system, as is needed to prove the uniqueness of solutions. 

Our strategy is based on the following decomposition of the electric field. We write the electrostatic potential in the form
\be
U = \bar U + \widehat U,
\ee
where $\bar U$ satisfies the equation
\be \label{eq:barU}
- \Delta \bar U = \rho_f, \qquad \lim_{|x| \to 0} \bar U(x) = 0 .
\ee
In other words, $\bar U$ satisfies the same equation as the electrostatic potential in the Vlasov-Poisson system for electrons. The remainder $\widehat U$ must then satisfy either
\be \label{eq:hatU-both}
\Delta \widehat U = g e^{\bar U + \widehat U} \quad \text{or} \quad \Delta \widehat U = \frac{ g e^{\bar U + \widehat U}}{\int_{\br^3} g e^{\bar U + \widehat U} \di x}.
\ee
{
We will similarly write
\be
E = \bar E + \widehat E, \qquad \bar E : = - \nabla \bar U , \quad \widehat E : = - \nabla \widehat U .
\ee }
This decomposition was introduced in \cite{IHK1} in order to study the Vlasov-Poisson system for ions in the one dimensional case. It was then used in \cite{IGP-WP} to study well-posedness in the cases $d=2,3$ on the torus.

In Section~\ref{sec:electric} of this article, we analyse the remainder term $\widehat U$. To deal with the nonlinearity in the equation satisfied by $\widehat U$, we make use of techniques from the calculus of variations.

In this way we are able to show that, under assumptions on $\rho_f$ that we expect to be satisfied by solutions of the Vlasov-Poisson systems \eqref{eq:vpme-var} and \eqref{eq:vpme-fixed}, the equations \eqref{eq:hatU-both} for $\widehat U$ are well-posed and enjoy good regularity estimates.
Specifically, we show that $\widehat U \in C^{1, \alpha}$ for any $\alpha \in (0,1)$,  with a uniform in time bound on this norm.
Thus $U$ is close to $\bar U$ up to a smoother correction, which is controlled uniformly in time in a strong norm.  These estimates then allow methods developed for the Vlasov-Poisson system for electrons to be adapted to the ion case.

This strategy was previously used in \cite{IGP-WP} to show well-posedness on the torus in dimension two and three. Here we apply it to the case where $x \in \RR^3$.
There are two main differences in the whole space case compared to the torus case.
One is that the domain is unbounded and we therefore need to account for the decay of the potential at infinity. 
In practice, this involves identifying an appropriate functional setting for the optimisation problem characterising $\widehat U$.

The other is that in the whole space we study two different models, with different nonlinearities. In particular, for the model \eqref{eq:vpme-fixed} with fixed total charge, the nonlinearity is different from the torus case due to the normalisation of the electron density.
Notice, for instance, that this nonlinearity is not monotone in $\widehat U$.
To handle this nonlinearity with techniques from the calculus of variations, it is necessary to modify the functional used to show the well-posedness of the equation. The natural choice of functional is not bounded below, and so the proof in Subsection~\ref{sec:fixed-exist} differs from the one in \cite{IGP-WP}.
Moreover, a new stability estimate is needed (Lemma~\ref{lem:fixed-stability}).

\subsubsection{Well-posedness in $\RR^3$}
The well-posedness consists of two parts: existence and uniqueness of strong solutions. 
The existence is based on showing the propagation of moments. The idea is to show an a priori estimate on solutions, to the effect that, if the initial datum has a velocity moment of sufficiently high order: if
\be
\int_{\RR^3 \times \RR^3} |v|^{m_0} f_0 \di x \di v < + \infty,
\ee
then the velocity moments of the solution can also be controlled:
\be
\sup_{t \in [0,T]} \int_{\RR^3 \times \RR^3} |v|^{m_0} f(t,x,v) \di x \di v < + \infty .
\ee

In Section~\ref{sec:moments}, we prove the propagation of moments in this sense for the VPME systems \eqref{eq:vpme-var} and \eqref{eq:vpme-fixed}. The principle is to follow the approach of Lions and Perthame \cite{Lions-Perthame}, adapting it to include the extra part of the electrostatic potential $\widehat U$. This is possible thanks to the uniform estimates obtained in Section~\ref{sec:electric}.

For the uniqueness part of Theorem~\ref{thm:wp}, we use an approach in the style of Loeper \cite{Loeper}, who proved uniqueness for solutions of the Vlasov-Poisson system for electrons such that $\rho_f$ is bounded in $L^\infty(\RR^d)$. Loeper's strategy is to prove a stability property for solutions with respect to the initial data, quantified in the second order Wasserstein distance $W_2$. In Section~\ref{sec:uniqueness}, we prove an estimate of this type for the VPME systems \eqref{eq:vpme-var} and \eqref{eq:vpme-fixed} in the whole space.  The key here is that we need to prove suitable stability estimates for the smooth part of the potential $\widehat U$, in the case of the unbounded domain $\RR^3$. We carry this out in Subsection~\ref{sec:electric-stability}.

In Subsection~\ref{sec:thm-proof}, we show how to use these results to complete the proof of Theorem~\ref{thm:wp} -- in particular, to show that under the assumptions of the theorem, the resulting solutions have bounded density so that the uniqueness result may be applied.

\subsection{Proof of the Main Result} \label{sec:thm-proof}

\begin{proof}[Proof of Theorem \ref{thm:wp}]
Arguing as in \cite{Lions-Perthame} and in \cite{IGP-WP}, by approximation one can construct a global solution $f\in L^\infty([0,T] ; L^1\cap L^\infty(\br^3 \times \br^3))$ of \eqref{eq:vpme-var} (resp. \eqref{eq:vpme-fixed}) with uniformly bounded energy. Then, it follows by Proposition \ref{prop:moment-control} below that all moments of order less than $m_0$ are uniformly bounded on every finite time interval.

{
Next, we show that this implies that the solution has bounded density. We introduce the characteristic system associated to the Vlasov-Poisson system, which is the following ODE system: for $(x,v) \in \br^3 \times \br^3$,
\be
\frac{\rm{d}}{\rm{d}t} X(t ,x,v) = V(t , x,v), \; \frac{\rm{d}}{\rm{d}t} V(t , x,v) = E\left ( X(t , x,v) \right ) ; \quad X(0,x,v) = 0 , \,V(0,x,v) = v .
\ee }
As in \cite{Lions-Perthame}, since $m_0>6$ this implies that $\bar E$ is uniformly bounded (see \cite[Equation 16]{Lions-Perthame}), while $\widehat E$ is uniformly bounded thanks to Propositions \ref{prop:variable-hatU-reg}-\ref{prop:fixed-hatU-reg}.
This implies that $E$ is uniformly bounded, and therefore the characteristics satisfy the bound
$$
|V(t,x,v)-v|\leq C_T \qquad \text{for all }(t,x,v) \in [0,T]\times \br^3\times \br^3.
$$
Thus
$$
f(t,X(t,x,v),V(t,x,v))=f_0(x,v)\leq \frac{C}{(1+|v|)^r} \leq \frac{C_T}{(1+|V(t,x,v)|)^r} \qquad \text{for all }(t,x,v) \in [0,T]\times \br^3\times \br^3,
$$
and so
$$
f(t,y,w)\leq \frac{C_T}{(1+|w|)^r} \qquad \text{for all }(t,y,w) \in [0,T]\times \br^3\times \br^3.
$$
Since $r>3$, this yields
$$
\rho_f(t,y) \leq C_T \int_{\br^3}\frac{1}{(1+|w|)^r}\,dw \leq C_T \qquad \text{for all }(t,y) \in [0,T]\times \br^3,
$$
and the uniqueness follows by Theorem \ref{thm:uniq}.

\end{proof}

\subsection{Energy Functionals} \label{sec:energy}

We noted above that each of the VPME systems has an associated energy functional, which we denoted respectively by $\mc{E}_V$ and $\mc{E}_F$. These energy functionals are formally conserved by their associated systems.
The control of these energy functionals implies an integrability bound on the mass density $\rho_f$.

\begin{lem}[Control of the energy implies a moment bound] \label{energy-moment}
Assume one of the conditions
\be
\mc{E}_V[f] \leq C_0, \qquad \mc{E}_F[f] \leq C_0 .
\ee
Then there exists a constant $C$ depending on $C_0$ and $\| g \|_{L^1(\br^3)}$ only such that
\be
\int_{\br^3 \times \br^3} |v|^2 f \di x \di v \leq C. 
\ee
It follows that, if $f_0\in L^{\infty}(\br^3 \times \br^3),$ the associated mass density, $\rho_f = \int_{\br^3} f \di v$ satisfies
\be \label{rho-l53}
\| \rho_f \|_{L^{\frac{5}{3}}(\br^3)} \leq C .
\ee
\end{lem}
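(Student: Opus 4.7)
The plan is to observe that, in both energy functionals, the kinetic and field contributions $\int_{\br^3\times\br^3} |v|^2 f\,\di x\,\di v$ and $\int_{\br^3} |E|^2\,\di x$ are manifestly nonnegative. It therefore suffices to bound the remaining electron term from below by a constant depending only on $\|g\|_{L^1(\br^3)}$, and then apply a standard velocity-space interpolation.

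First I would handle the variable-charge case. The elementary one-variable inequality $(s-1)e^s \ge -1$ (whose minimum is attained at $s=0$), multiplied by $g\ge 0$ and integrated over $\br^3$, yields
\begin{equation}
\int_{\br^3} (U-1) g e^U \di x \ge -\|g\|_{L^1(\br^3)},
\end{equation}
and hence $\int |v|^2 f\,\di x\,\di v \le \mc{E}_V[f] + 2\|g\|_{L^1(\br^3)}$. For the fixed-charge case the integrand $\phi g e^\phi$ may change sign, so I would argue in one of two equivalent ways. Either I use the pointwise bound $s e^s \ge -e^{-1}$ to get $\int \phi g e^\phi\,\di x \ge -e^{-1}\|g\|_{L^1(\br^3)}$; or I use the identity
\begin{equation}
\int_{\br^3} \phi g e^\phi \di x = \int_{\br^3} g e^\phi \log\!\left(\frac{g e^\phi}{g}\right)\di x,
\end{equation}
which, together with $\int g e^\phi\,\di x = 1$ (built into the definition of $\phi$), is a Kullback-Leibler divergence, nonnegative by Jensen's inequality (modulo a harmless constant depending on $\|g\|_{L^1}$). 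Either way we obtain $\int |v|^2 f\,\di x\,\di v \le \mc{E}_F[f] + C\|g\|_{L^1(\br^3)}$.

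Once the kinetic energy is controlled, the $L^{5/3}$ bound on $\rho_f$ follows from the standard velocity-splitting interpolation. For any $R>0$,
\begin{equation}
\rho_f(x) \le \|f\|_{L^\infty}\cdot\frac{4\pi}{3} R^3 + R^{-2}\int_{\br^3} |v|^2 f(x,v)\di v,
\end{equation}
and optimizing in $R$ (choosing $R^5 \sim \|f\|_{L^\infty}^{-1}\int |v|^2 f\,\di v$) produces the pointwise inequality
\begin{equation}
\rho_f(x)^{5/3} \le C\, \|f\|_{L^\infty}^{2/3} \int_{\br^3} |v|^2 f(x,v)\di v.
\end{equation}
Integrating in $x$ and combining with the kinetic energy bound proved in the previous step gives \eqref{rho-l53}.

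There is no serious obstacle here. The only mild subtlety is obtaining a lower bound for $\int \phi g e^\phi\,\di x$ in the fixed-charge case, precisely because $\phi$ need not have a definite sign; this is exactly where one exploits either the pointwise minimum of $s\mapsto s e^s$ or the fact that the normalisation in the definition of $\phi$ turns $g e^\phi$ into a probability density, enabling the relative-entropy interpretation.
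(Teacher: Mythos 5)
Your proposal is correct and follows essentially the same route as the paper: the pointwise lower bounds $(s-1)e^s\geq -1$ and $se^s\geq -e^{-1}$ control the electron term by $\|g\|_{L^1(\br^3)}$, and the $L^{5/3}$ bound comes from the standard velocity-splitting interpolation (which the paper delegates to Lemma~\ref{lem:rho-interpolation}). The relative-entropy remark is a nice optional alternative but adds nothing beyond the pointwise bound already used.
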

\begin{proof}
Observe that the functions $x e^x$, $(x-1)e^x$ are bounded from below, uniformly for all $x \in \br$:
\be
x e^x \geq - e^{-1}, \qquad (x-1) e^x \geq -1 .
\ee
Therefore, since $g \geq 0$, in the variable charge case we have
\be
\int_{\br^3 \times \br^3} |v|^2 f \di x \di v \leq \mc{E}_V[f] + 2 \| g \|_{L^1(\br^3)}  \leq C\left ( C_0, \| g \|_{L^1(\br^3)} \right ).
\ee
In the fixed charge case, we have
\be
\int_{\br^3 \times \br^3} |v|^2 f \di x \di v \leq \mc{E}_V[f] + \frac{2}{e} \| g \|_{L^1(\br^3)}  \leq C\left ( C_0, \| g \|_{L^1(\br^3)} \right ).
\ee
The estimate \eqref{rho-l53} then follows from a standard interpolation argument; see Lemma~\ref{lem:rho-interpolation} below.

\end{proof}

\begin{paragraph}{Notation.}
The notation $L^p(g)$ denotes $L^p$ norms taken with respect to the density $g$:
\be
\| f\|_{L^p(g)}^p = \int_{\br^3} |f(x)|^p g(x) \di x .
\ee
\end{paragraph}

\section{Electric Field Estimates} \label{sec:electric}

\subsection{Decomposition}

We decompose the electrostatic potential $U$ into the form $U = \bar U + \widehat U$, where $ \bar U $ satisfies
\be \label{eq:Poisson-sing}
- \Delta \bar U= \rho_f , \qquad \lim_{|x| \to \infty} \bar U(x) = 0 .
\ee
Thus $\bar U$ is exactly the electrostatic potential we would have in the case of the classical Vlasov-Poisson system. The remainder $\widehat U$ must satisfy either
\be \label{eq:hatU-var}
\Delta \widehat U = g e^{\bar U + \widehat U},
\ee
in the case of variable total charge, or
\be \label{eq:hatU-fixed}
\Delta \widehat U = \frac{ g e^{\bar U + \widehat U}}{\int_{\br^3} g e^{\bar U + \widehat U} \di x},
\ee
in the case of fixed total charge.

In the rest of this section, we show that $\bar U$ and $\widehat U$ exist and exhibit regularity estimates for them.

\subsection{Singular Part}

We recall some basic estimates on $\bar U$ satisfying the Poisson equation \eqref{eq:Poisson-sing}, in the case where $\rho_f \in L^1 \cap L^{5/3}(\RR^3)$. This is the degree of integrability we expect to have on $\rho_f$ when $f$ is a solution of the VPME system, based on the conservation of mass and energy.

To study $\bar U$, we make use of the Green's function for the Laplace equation on $\RR^3$, which is the function
\be
G(x) = \frac{1}{4 \pi |x|} , \quad x \neq 0.
\ee
The Poisson equation \eqref{eq:Poisson-sing} has a distributional solution of the form $G \ast \rho_f$ (see for example \cite[Theorem 6.21]{Lieb-Loss}). This solution decays at infinity and thus is the unique such solution by Liouville's theorem for harmonic functions.

We have the following integrability estimates on $\bar U$, which follow from \cite[Section 4.5]{Hormander}.

\begin{lem} \label{lem:Ubar}
Let $\rho_f \in L^1 \cap L^{\frac{5}{3}}(\br^3)$. Then $\bar U \in L^{3, \infty} \cap L^\infty (\br^3)$ with the estimates
\be
\| \bar U \|_{L^{3, \infty}(\br^3)} \leq C \|\rho_f \|_{L^1(\br^3)}, \quad \| \bar U \|_{L^\infty(\br^3)} \leq C \|\rho_f \|^{\frac{5}{6}}_{L^{\frac{5}{3}}(\br^3)} \| \rho_f \|^{\frac{1}{6}}_{L^1(\br^3)}, \quad [ \bar U ]_{C^{0,\frac{1}{5}}(\br^3)} \leq C \|\rho_f \|_{L^{\frac{5}{3}}(\br^3)}.
\ee 

Let $\rho_f \in L^1 \cap L^p(\br^3)$, where $p \in (1, 3)$. Then
\be
\| \bar E \|_{L^{\frac{3}{2}, \infty}(\br^3)} \leq C \| \rho_f \|_{L^1(\br^3)}, \qquad \| \bar E \|_{L^q(\br^3)} \leq C \| \rho_f \|_{L^p(\br^3)},
\ee
where
\be
\frac{1}{q} = \frac{1}{p} -\frac{1}{3} .
\ee

\end{lem}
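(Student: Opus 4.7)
The plan is to write $\bar U = G \ast \rho_f$ with $G(x) = (4\pi|x|)^{-1}$ and $\bar E = -\nabla G \ast \rho_f$, and reduce every claim to a convolution estimate with the kernels $G$ or $\nabla G$. The key kernel facts are that $G \in L^{3,\infty}(\RR^3)$ and $|\nabla G| \in L^{3/2,\infty}(\RR^3)$: these sit precisely at the critical weak exponents in dimension three.

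The weak-$L^3$ bound on $\bar U$ and the weak-$L^{3/2}$ bound on $\bar E$ under $\rho_f \in L^1$ then both follow immediately from the weak-type Young (O'Neil) convolution inequality. The strong $L^q$ estimate on $\bar E$ with $1/q = 1/p - 1/3$ is the Hardy--Littlewood--Sobolev inequality for the Riesz potential of order one, since $|\nabla G|$ agrees up to a constant with the kernel $I_1$.

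For the $L^\infty$ bound on $\bar U$ I would use the standard near/far splitting at a radius $R > 0$,
\begin{equation*}
\bar U(x) = \int_{|y| < R} G(y)\rho_f(x-y)\, \di y + \int_{|y| \geq R} G(y)\rho_f(x-y)\, \di y .
\end{equation*}
On the inner ball, Hölder with exponents $(5/3, 5/2)$ gives a contribution of order $R^{1/5}\|\rho_f\|_{L^{5/3}}$, using that $|y|^{-1} \in L^{5/2}(B_R)$ since $5/2 < 3$; on the outer region, the pointwise bound $G \leq (4\pi R)^{-1}$ paired with $\|\rho_f\|_{L^1}$ gives a contribution of order $R^{-1}\|\rho_f\|_{L^1}$. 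Optimising by choosing $R^{6/5} \sim \|\rho_f\|_{L^1}/\|\rho_f\|_{L^{5/3}}$ produces exactly the exponents $5/6$ and $1/6$ appearing in the statement.

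For the Hölder seminorm $[\bar U]_{C^{0,1/5}}$ the cleanest route is Sobolev embedding: Calderón--Zygmund theory applied to $-\Delta \bar U = \rho_f \in L^{5/3}$ gives $\nabla^2 \bar U \in L^{5/3}$, the Gagliardo--Nirenberg--Sobolev inequality then promotes $\nabla \bar U$ to $L^{15/4}$, and Morrey's inequality at exponent $1 - 3/(15/4) = 1/5$ delivers the Hölder control. All these ingredients are classical and collected in \cite[Section 4.5]{Hormander}, so there is no genuine obstacle here: the only care point is tracking the scaling, which works out naturally because the pair $(5/3, 1/5)$ is linked by the critical Sobolev relation in dimension three.
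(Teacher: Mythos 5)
Your proposal is correct, but it is worth noting that the paper does not actually prove this lemma: it simply cites the classical potential estimates in \cite[Section 4.5]{Hormander} (together with the representation $\bar U = G \ast \rho_f$, justified via Liouville's theorem). What you have written is essentially a self-contained reconstruction of those classical facts, and the individual steps are sound: the weak-type Young/O'Neil inequality with $G \in L^{3,\infty}$ and $\nabla G \in L^{3/2,\infty}$ gives the two weak bounds from $\|\rho_f\|_{L^1}$; Hardy--Littlewood--Sobolev with kernel $|x|^{-2}$ gives $\|\bar E\|_{L^q} \leq C\|\rho_f\|_{L^p}$ for $1<p<3$, $\frac1q=\frac1p-\frac13$; and your near/far splitting with H\"older exponents $(5/3,5/2)$ on $B_R$ ($\||y|^{-1}\|_{L^{5/2}(B_R)} \sim R^{1/5}$) and the crude bound $G\le (4\pi R)^{-1}$ outside, optimised at $R^{6/5}\sim \|\rho_f\|_{L^1}/\|\rho_f\|_{L^{5/3}}$, reproduces exactly the $L^\infty$ bound with exponents $5/6$ and $1/6$. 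For the H\"older seminorm, your route through Calder\'on--Zygmund ($\nabla^2\bar U \in L^{5/3}$) plus homogeneous Sobolev plus Morrey works, but it is slightly roundabout and the Sobolev step for $\nabla\bar U$ on all of $\RR^3$ implicitly uses the decay of $\bar E$ at infinity (which does hold here); a cleaner path is to note that the second part of the lemma with $p=5/3$ already gives $\|\nabla \bar U\|_{L^{15/4}(\br^3)} \leq C\|\rho_f\|_{L^{5/3}(\br^3)}$, and then Morrey's inequality at exponent $1-\frac{3}{15/4}=\frac15$ yields $[\bar U]_{C^{0,1/5}(\br^3)} \leq C\|\rho_f\|_{L^{5/3}(\br^3)}$ directly, with no Calder\'on--Zygmund theory needed.
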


Note in particular that for $p = \frac{5}{3}$, we have $q = \frac{15}{4}$. We thus expect to control $\bar E$, uniformly in time, in the spaces $L^{\frac{3}{2}, \infty}(\br^3)$ and $L^{\frac{15}{4}}(\br^3)$.

\subsection{Existence of the Smooth Part}

\subsubsection{Variable Total Charge}

We prove the existence of $\widehat U$ by making use of techniques from the calculus of variations. 
Consider the functional
\be
J_V[h] : = \int_{\br^3} |\nabla h (x)|^2 + g(x) e^{h(x) + \bar U(x)} \di x  \geq 0 .
\ee
The idea is to minimise $J_V$ over those functions $h$ decaying at infinity for which $\nabla h \in L^2(\RR^3)$. Note that, by a Sobolev inequality, these functions belong to $L^6(\br^3)$. Hence we introduce the following classical notation:
$$
\dot W^{1,2}(\RR^3):=\{h :\br^3\to \br\,:\, h \in L^6(\br^3),\,\nabla h \in L^2(\br^3)\}.
 $$

\begin{lem} \label{lem:var-Uhat-exist}
Assume that $\bar{U}\in\dot{W}^{1,2}(\RR^3).$ There exists a unique minimiser of $J_V$ over $\dot W^{1,2}(\RR^3)$.
\end{lem}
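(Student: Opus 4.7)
The plan is to apply the direct method of the calculus of variations. Observe first that $J_V$ is well-defined and finite on $\dot W^{1,2}(\br^3)$: by Lemma \ref{lem:Ubar}, $\bar U \in L^\infty(\br^3)$, and since $g \in L^1(\br^3)$ we have $J_V[0] = \int_{\br^3} g e^{\bar U}\di x < \infty$, while $J_V[h] \in [0,\infty]$ for every $h \in \dot W^{1,2}(\br^3)$ by non-negativity of $g$. Set $m := \inf_{\dot W^{1,2}(\br^3)} J_V \in [0, J_V[0]]$ and take a minimising sequence $\{h_n\}$ with $J_V[h_n] \to m$. From $\|\nabla h_n\|_{L^2}^2 \leq J_V[h_n] \leq m+1$ for $n$ large, the Sobolev inequality gives $\|h_n\|_{L^6} \leq C \|\nabla h_n\|_{L^2} \leq C$, so the sequence is bounded in $\dot W^{1,2}(\br^3)$.

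Up to extraction, $\nabla h_n \rightharpoonup \nabla h$ in $L^2(\br^3)$ and $h_n \rightharpoonup h$ in $L^6(\br^3)$ for some $h \in \dot W^{1,2}(\br^3)$. By Rellich--Kondrachov applied on balls $B_R$ together with a diagonal extraction, one can further assume $h_n \to h$ a.e.\ on $\br^3$. The gradient term is weakly lower semi-continuous, so $\|\nabla h\|_{L^2}^2 \leq \liminf_n \|\nabla h_n\|_{L^2}^2$. For the exponential term, the integrand $g e^{h_n + \bar U}$ is non-negative and converges a.e.\ to $g e^{h + \bar U}$, so Fatou's lemma yields $\int_{\br^3} g e^{h + \bar U}\di x \leq \liminf_n \int_{\br^3} g e^{h_n + \bar U}\di x$. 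Summing gives $J_V[h] \leq \liminf_n J_V[h_n] = m$, so $h$ is a minimiser.

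For uniqueness, we rely on strict convexity. The map $p \mapsto |p|^2$ on $\br^3$ is strictly convex, and if $h_1, h_2 \in \dot W^{1,2}(\br^3)$ satisfy $\nabla h_1 = \nabla h_2$ then $h_1 - h_2$ is a constant in $L^6(\br^3)$ and hence zero; thus $h \mapsto \int|\nabla h|^2\di x$ is strictly convex on $\dot W^{1,2}(\br^3)$. Since $t \mapsto e^t$ is strictly convex and $g \geq 0$, the exponential term is convex. Therefore $J_V$ is strictly convex on $\dot W^{1,2}(\br^3)$, and no second minimiser is possible.

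The main obstacle is the passage to the limit in the nonlinear exponential term on the unbounded domain $\br^3$, where there is no global compact Sobolev embedding. The resolution is to combine weak convergence in $L^6(\br^3)$ with the local Rellich--Kondrachov compactness on balls to extract an a.e.\ convergent subsequence, and then exploit the non-negativity of the integrand via Fatou's lemma rather than attempting to obtain strong convergence of $e^{h_n + \bar U}$ directly; the explicit $L^\infty$ control on $\bar U$ from Lemma \ref{lem:Ubar} ensures that $g e^{\bar U}$ is a legitimate $L^1$ weight throughout.
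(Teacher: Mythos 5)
Your proof is correct and follows essentially the same route as the paper: the direct method with a minimising sequence bounded in $\dot W^{1,2}(\br^3)$, a.e.\ convergence via Rellich--Kondrachov on balls plus a diagonal extraction, weak lower semicontinuity of the gradient term together with Fatou for the exponential term, and convexity (which you sharpen to strict convexity) for uniqueness. The only difference is cosmetic: your finiteness check uses the competitor $h=0$, which requires $\bar U\in L^\infty(\br^3)$ via Lemma~\ref{lem:Ubar} (not among this lemma's stated hypotheses, though available in the application), whereas the paper tests with $h=-\bar U$, giving $J_V[-\bar U]=\|\nabla \bar U\|_{L^2(\br^3)}^2+\|g\|_{L^1(\br^3)}<\infty$ using only $\bar U\in\dot W^{1,2}(\br^3)$.
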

\begin{proof}
Consider a minimising sequence $(h_n)_n \subset \dot W^{1,2}(\RR^3)$. For sufficiently large $n$ we have the bound
{
\be
J_V [h_n] \leq J_V{[- \bar U]} = \int_{\br^3} | \nabla \bar U|^2 \di x + \int_{\br^3} g(x) \di x .
\ee }
It follows that $(\nabla h_n)_n$ is uniformly bounded in $L^2(\br^3)$. We may therefore pass to a subsequence such that $h_n \rightharpoonup h$ in $L^6(\br^3)$ and $\nabla h_n \rightharpoonup \nabla h$  in $L^2(\br^3)$.
Also, by the Rellich-Kondrachov theorem, for any bounded set $A$ the sequence $h_n \mathbbm{1}_A$ converges to $h\mathbbm{1}_A$ strongly in $L^p(\br^3)$ for any $p<6$.
Hence, by a diagonal argument, it follows that (by passing to a further subsequence) we may assume that $h_n$ converges to $h$ almost everywhere on $\br^3$.

By lower semi-continuity of the norm under weak convergence, we have
\be
\int_{\br^3} | \nabla h|^2 \di x \leq \liminf_{n \to \infty} \int_{\br^3} | \nabla h_n|^2 \di x .
\ee
By Fatou's lemma, we have
\be
\int_{\br^3} g e^{h + \bar U} \di x = \int_{\br^3} \lim_{n \to \infty} g e^{h_n + \bar U} \di x \leq \liminf_{n \to \infty}  \int_{\br^3} g e^{h_n + \bar U} \di x .
\ee
It follows that
\be
J_V[h] \leq \liminf_{n \to \infty} J_V[h_n] = \inf_{\phi} J_V[\phi] .
\ee
Thus $h$ is a minimiser. The uniqueness of $h$ follows from the convexity of $J_V$.
\end{proof}

We now show that the smooth part of the potential $\widehat U$ can be taken to be the minimiser of $J_V$.
Let $\widehat U$ denote the minimiser of $J_V$ and note that
\be
\int_{\RR^3} g e^{\widehat U + \bar U} \di x \leq J_V[\widehat U] \leq J_V[- \bar U]
\ee
and thus $ge^{\widehat U + \bar U}$ is a function in $L^1(\RR^3)$.

It is then possible to show that $\widehat U$ satisfies
\be
\Delta  \widehat U = g e^{\bar U + \widehat U} ,
\ee
which is the Euler-Lagrange equation associated to the minimisation problem above (see Appendix B).

\subsubsection{Fixed Total Charge} \label{sec:fixed-exist}

In this subsection we prove the existence of $\widehat U$ in the case of fixed total charge.
We will use an estimate due to Bouchut \cite[Lemma 2.6]{Bouchut}, which is used to obtain lower bounds on the integral
\be
\int_{\RR^3} g e^{U} \di x.
\ee
This will provide upper bounds on the nonlinearity in the Poisson equation in the fixed total charge case.

\begin{lem} \label{lem:Bouchut-Marc}
Let $g \in L^1 \cap L^\infty(\br^3)$ with $\int_{\br^3} g \di x = 1$. Then, for $U \in L^{3,\infty}(\br^3)$, the following estimate holds:
\be
\int_{\br^3} g e^{-|U|} \di x \geq C e^{- C \| U \|_{L^{3, \infty}(\br^3)} \, \| g \|^{\frac{1}{3}}_{L^\infty(\br^3)}} .
\ee
\end{lem}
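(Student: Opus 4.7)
The plan is to exploit the definition of $L^{3,\infty}$ in its form as a distribution function estimate on superlevel sets, together with the $L^\infty$ bound on $g$, in order to show that $|U|$ cannot be large on too much of the $g$-mass. Concretely, for a parameter $\lambda>0$ to be optimised later, I introduce the superlevel set $A_\lambda := \{x\in\br^3 : |U(x)|>\lambda\}$ and split
\begin{equation}
\int_{\br^3} g e^{-|U|}\di x \;\geq\; \int_{A_\lambda^c} g e^{-|U|}\di x \;\geq\; e^{-\lambda}\int_{A_\lambda^c} g\di x.
\end{equation}

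Next, the normalisation $\int_{\br^3} g\di x = 1$ lets me rewrite $\int_{A_\lambda^c} g\di x = 1 - \int_{A_\lambda} g\di x$, and the key point is to estimate the ``bad'' mass $\int_{A_\lambda} g\di x$ from above. Using the $L^\infty$ bound on $g$ and the weak $L^3$ bound on $U$,
\begin{equation}
\int_{A_\lambda} g\di x \;\leq\; \|g\|_{L^\infty(\br^3)}\,|A_\lambda|\;\leq\;\frac{\|g\|_{L^\infty(\br^3)}\,\|U\|_{L^{3,\infty}(\br^3)}^3}{\lambda^3}.
\end{equation}
Thus I obtain
\begin{equation}
\int_{\br^3} g e^{-|U|}\di x \;\geq\; e^{-\lambda}\left(1-\frac{\|g\|_{L^\infty(\br^3)}\,\|U\|_{L^{3,\infty}(\br^3)}^3}{\lambda^3}\right).
\end{equation}

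To finish, I optimise in $\lambda$: choosing $\lambda^3 = 2\,\|g\|_{L^\infty(\br^3)}\,\|U\|_{L^{3,\infty}(\br^3)}^3$, so that $\lambda = 2^{1/3}\|g\|_{L^\infty(\br^3)}^{1/3}\|U\|_{L^{3,\infty}(\br^3)}$, makes the parenthesis equal to $1/2$, and yields
\begin{equation}
\int_{\br^3} g e^{-|U|}\di x \;\geq\; \tfrac{1}{2}\exp\!\left(-2^{1/3}\,\|g\|_{L^\infty(\br^3)}^{1/3}\,\|U\|_{L^{3,\infty}(\br^3)}\right),
\end{equation}
which is precisely the claimed bound for a suitable absolute constant $C$. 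There is no real obstacle in this argument: it is just a Chebyshev-type distribution function estimate combined with an optimisation in the threshold $\lambda$; the only subtlety worth noting is that the homogeneities in $\|g\|_{L^\infty}^{1/3}$ and in $\|U\|_{L^{3,\infty}}$ (to the first power) arise naturally from the scaling of the weak-$L^3$ Chebyshev inequality, which forces the cube root on $\|g\|_{L^\infty}$ and the first power on $\|U\|_{L^{3,\infty}}$ inside the exponential.
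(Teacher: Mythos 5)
Your proof is correct. Note that the paper does not actually prove this lemma: it is quoted as an external result from \cite{Bouchut} (Lemma 2.6 there), so there is no internal proof to compare against, and your Chebyshev-plus-optimisation argument is a correct, self-contained derivation of the stated bound (and is the natural argument behind the cited estimate: cut at a level $\lambda$, bound the $g$-mass of the superlevel set $\{|U|>\lambda\}$ by $\|g\|_{L^\infty}\|U\|_{L^{3,\infty}}^3\lambda^{-3}$, and optimise in $\lambda$). Two small remarks: you implicitly use $g\geq 0$, both to discard the integral over $A_\lambda$ and to write $\int_{A_\lambda}g\leq \|g\|_{L^\infty(\br^3)}|A_\lambda|$ -- harmless here, since the paper assumes $g=e^{-H}\geq 0$ throughout; and your final bound $\tfrac12\exp\bigl(-2^{1/3}\|g\|_{L^\infty(\br^3)}^{1/3}\|U\|_{L^{3,\infty}(\br^3)}\bigr)$ matches the statement under the usual convention that the two occurrences of $C$ denote possibly different universal constants.
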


We recall that $\bar U$ has the representation $G \ast \rho_f$ and is therefore non-negative in the cases we consider ($d=3$).

\begin{lem}[Existence of $\widehat U$]
Let $\bar U \in \dot W^{1,2}(\br^3)$ be non-negative. Then there exists a unique solution $\widehat U \in \dot W^{1,2}(\br^3)$ satisfying
\be
\Delta \widehat U = \frac{g e^{\widehat U + \bar U}}{ \int_{\br^3} g e^{\bar U + \widehat U} \di x} .
\ee
For this $\widehat U$, we have
\be
0 <  \int_{\br^3} g e^{\bar U + \widehat U} \di x < + \infty .
\ee
\end{lem}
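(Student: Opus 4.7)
My plan is to construct $\widehat U$ via the direct method of the calculus of variations, as the unique minimiser on $\dot W^{1,2}(\br^3)$ of the functional
\be
J_F[h] := \int_{\br^3} |\nabla h|^2 \di x + 2 \log \int_{\br^3} g e^{h + \bar U} \di x.
\ee
A formal first-variation computation shows that the Euler-Lagrange equation of $J_F$ is exactly the desired PDE: the logarithm is what produces the non-local denominator $(\int g e^{h+\bar U})^{-1}$, and it is precisely this modification of $J_V$ that accommodates the fixed-charge nonlinearity.

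The main obstacle will be showing that $J_F$ is bounded below, since the log term could a priori drive it to $-\infty$. I would overcome this by exploiting the hypothesis $\bar U \geq 0$, so that $e^{\bar U} \geq 1$, together with Jensen's inequality applied to the probability measure $g\,\di x$:
\be
\log \int g e^{h+\bar U} \di x \geq \log \int g e^{h} \di x \geq \int g h \di x \geq -\|g\|_{L^{6/5}} \|h\|_{L^6} \geq - C_g \|\nabla h\|_{L^2}.
\ee
The last step uses the interpolation bound $\|g\|_{L^{6/5}} \leq \|g\|_{L^1}^{5/6} \|g\|_{L^\infty}^{1/6}$ (available since $g \in L^1 \cap L^\infty$) together with the Sobolev embedding $\dot W^{1,2}(\br^3) \hookrightarrow L^6(\br^3)$. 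This yields the coercive lower bound $J_F[h] \geq \|\nabla h\|_{L^2}^2 - 2 C_g \|\nabla h\|_{L^2} \geq - C_g^2$, and the test function $-\bar U \in \dot W^{1,2}$ gives $\inf J_F \leq J_F[-\bar U] = \|\nabla \bar U\|_{L^2}^2 < \infty$.

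Once coercivity is in hand, the argument mirrors that of Lemma \ref{lem:var-Uhat-exist}. For a minimising sequence $(h_n)$ one has $\sup_n \|\nabla h_n\|_{L^2} < \infty$; passing to a subsequence, $h_n \rightharpoonup \widehat U$ weakly in $\dot W^{1,2}$ and, by Rellich-Kondrachov plus a diagonal extraction, $h_n \to \widehat U$ almost everywhere. The Dirichlet integral is lower semi-continuous under weak convergence, while Fatou's lemma combined with the monotonicity of $\log$ gives
\be
\log \int g e^{\widehat U + \bar U} \di x \leq \liminf_{n\to\infty} \log \int g e^{h_n + \bar U} \di x,
\ee
so $\widehat U$ is a minimiser. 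Uniqueness follows from strict convexity: $\|\nabla h\|_{L^2}^2$ is strictly convex on $\dot W^{1,2}$ and $\log \int g e^{h+\bar U}$ is convex by H\"older's inequality; any $\dot W^{1,2}$-solution of the PDE is a critical point of the convex $J_F$, hence coincides with the minimiser. A standard first-variation computation against $\varphi \in C_c^\infty(\br^3)$, followed by integration by parts, confirms that $\widehat U$ satisfies the stated equation distributionally.

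Finally, the two-sided bound on $\int g e^{\widehat U + \bar U}\di x$ is immediate: strict positivity follows from $g \geq 0$ with $\int g = 1$, while
\be
2 \log \int g e^{\widehat U + \bar U} \di x \leq J_F[\widehat U] \leq J_F[-\bar U] = \|\nabla \bar U\|_{L^2}^2
\ee
yields the upper bound $\int g e^{\widehat U + \bar U} \di x \leq \exp(\|\nabla \bar U\|_{L^2}^2/2) < +\infty$.
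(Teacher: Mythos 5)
Your proposal is essentially correct, but it takes a genuinely different route from the paper's. The paper treats the unmodified functional as problematic (it asserts $J_F$ is not bounded below) and therefore minimises a truncated functional $J_K$, with $\log$ replaced by the bounded-below approximation $L_K$; it derives the approximate Euler--Lagrange equation \eqref{Poisson-K}, uses the uniform bound $\|\Delta h^{(K)}\|_{L^1(\br^3)}\leq 1$ together with Lemma~\ref{lem:Bouchut-Marc} to get a $K$-independent lower bound on $\int_{\br^3} g e^{\bar U + h^{(K)}}\di x$, and then chooses $K$ so large that the truncation is inactive. You instead observe that, under the standing hypotheses $g\geq 0$, $\int_{\br^3} g\,\di x=1$, $g\in L^1\cap L^\infty(\br^3)$ (hence $g\in L^{6/5}(\br^3)$) and $\bar U\geq 0$, the untruncated functional is already coercive: Jensen's inequality for the probability measure $g\,\di x$, H\"older's inequality and the Sobolev embedding $\dot W^{1,2}(\br^3)\hookrightarrow L^6(\br^3)$ give $\log\int_{\br^3} g e^{h+\bar U}\di x\geq \int_{\br^3} g h\,\di x\geq -C_g\|\nabla h\|_{L^2(\br^3)}$, so $J_F[h]\geq -C_g^2$, and the direct method then runs exactly as in Lemma~\ref{lem:var-Uhat-exist} (weak lower semicontinuity of the Dirichlet term, Fatou plus monotonicity of $\log$ for the nonlinear term). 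This is a correct and more direct argument, which avoids the truncation and the use of Lemma~\ref{lem:Bouchut-Marc} at this stage; your factor $2$ in front of the logarithm also makes the Euler--Lagrange equation come out exactly as stated. What the paper's construction buys in exchange is quantitative information obtained along the way -- the universal bound $\|\widehat U\|_{L^{3,\infty}(\br^3)}\leq C$ and the lower bound \eqref{eq:g} on the normalising integral -- which is reused in Proposition~\ref{prop:fixed-hatU-reg}; in your approach these facts would be recovered afterwards from the equation (or from Jensen again).

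One caveat concerns uniqueness. Strict convexity gives uniqueness of the \emph{minimiser}, but the lemma asserts uniqueness of $\dot W^{1,2}(\br^3)$ solutions of \eqref{eq:Poisson-fixed}. Your step ``any $\dot W^{1,2}$ solution is a critical point of the convex $J_F$, hence the minimiser'' requires extending the weak formulation from $C_c^\infty$ test functions to increments $w-u\in\dot W^{1,2}(\br^3)$, which needs an integrability statement such as $g e^{u+\bar U}\in L^{6/5}(\br^3)$; this is not immediate here because $\bar U$ is only assumed to lie in $\dot W^{1,2}(\br^3)$ and may be unbounded. The paper sidesteps this point by citing \cite[Lemma 2.5]{Bouchut}; you should either do the same or supply that extension explicitly.
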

\begin{proof}
The uniqueness of solutions in the class $\dot W^{1,2}(\br^3)$ follows from \cite[Lemma 2.5]{Bouchut}. To construct a solution, we look for a minimiser of
\be
J_F [h] : = \int_{\br^3} |\nabla h|^2 \di x + \log{ \left ( \int_{\br^3} g e^{\bar U + h} \di x \right ) } .
\ee
The difficulty in this case compared to the variable charge case is that this functional is not bounded below. We therefore introduce an approximating functional $J_K$,defined by
\be
J_K[h] : = \int_{\br^3} |\nabla h|^2 \di x + L_K\left ({ \int_{\br^3} g e^{\bar U + h} \di x }\right) .
\ee
The function $L_K$ is a smooth and non-decreasing approximation of the logarithm function, satisfying
\be
L_K(x) : = \begin{cases}
\log{x} & x > e^{-(K-1)} \\
-K & x \leq e^{-K} 
\end{cases}, \qquad |L_K'(x)| \leq \frac{1}{x} \wedge e^{K-1} \qquad \| L_K'' \|_{L^\infty} \leq C_K .
\ee

We minimise $J_K$ over the space $\dot W^{1,2}(\br^3)$. First, note that
\be
\inf J_K[h] \leq J_K[-\bar U] = \| \nabla \bar U \|_{L^2(\br^3)}^2 + L_K\(\| g \|_{L^1(\br^3)}\) .
\ee
Let $(h_n)_n$ be a minimising sequence. Since $L_K$ is bounded from below by $-K$, we have the uniform estimates
\begin{align}
\| \nabla h_n \|^2_{L^2(\br^3)} \leq \| \nabla \bar U \|_{L^2(\br^3)}^2 + L_K(\| g \|_{L^1(\br^3)}) + K \\
\int_{\br^3} g e^{\bar U + h_n}  \di x \leq e^{-(K-1)} \vee \exp{\left [ \| \nabla \bar U \|_{L^2(\br^3)}^2 + L_K(\| g \|_{L^1(\br^3)}) \right ]}.
\end{align}
As in the proof of Lemma~\ref{lem:var-Uhat-exist}, we may pass to a subsequence such that $h_n$ converges almost everywhere to some $h^{(K)}$, with $\nabla h_n$ converging weakly in $L^2(\br^3)$ to $\nabla h^{(K)}$. Therefore
\be
\| \nabla h^{(K)} \|_{L^2(\br^3)} \leq \liminf_{n \to \infty} \| \nabla h_n \|_{L^2(\br^3)}, \qquad \int_{\br^3} g e^{\bar U + h^{(K)}} \di x  \di x \leq \liminf_{n \to \infty} \int_{\br^3} g e^{\bar U + h_n}  \di x.
\ee
Since $L_K$ is smooth and increasing, we have that
\be
J_K[h^{(K)}] \leq \liminf_{n \to \infty} J_K[h_n] = \inf_h J_K[h].
\ee
Hence $h^{(K)}$ is a minimiser of $J_K$. It follows that $h^{(K)}$ is a solution of the associated Euler-Lagrange equation
\be \label{Poisson-K}
\Delta h^{(K)} = g e^{\bar U + h^{(K)}} \, L_K' \left ( \int_{\br^3} g e^{\bar U + h^{(K)}} \di x \right ) .
\ee

The right hand side of the approximating Poisson equation \eqref{Poisson-K} is non-negative and its $L^1$ norm satisfies
\be
\int_{\br^3} g e^{\bar U + h^{(K)}} \, L_K' \left ( \int_{\br^3} g e^{\bar U + h^{(K)}} \di x \right ) \di x \leq M_K\left (  \int_{\br^3} g e^{\bar U + h^{(K)}} \di x \right ),
\ee
where $M_K$ denotes the function
\be
M_K(x) = x L_K'(x) .
\ee
By assumption on $L_K$, $|M_K| \leq 1$. Therefore $\Delta h^{(K)} \in L^1(\br^3)$ with
\be
\| \Delta h^{(K)} \|_{L^1(\br^3)} \leq 1 .
\ee
It follows that there exists $C$ independent of $K$ such that
\be
\| h^{(K)} \|_{L^{3, \infty}(\br^3)} \leq C.
\ee
Therefore, by Lemma~\ref{lem:Bouchut-Marc},
\be
\int_{\br^3} g e^{\bar U + h^{(K)}} \di x \geq \int_{\br^3} g e^{h^{(K)}} \di x \geq C_{g} > 0,
\ee
where $C_{g}$ depends only on $g$, and in particular is independent of $K$. We may choose $K$ sufficiently large such that $e^{-(K-1)} < C_g$. This implies that
\be
L_K' \left ( \int_{\br^3} g e^{\bar U + h^{(K)}} \di x \right ) = \frac{1}{ \int_{\br^3} g e^{\bar U + h^{(K)}} \di x},
\ee
so that for this choice of $K$, $h^{(K)}$ is in fact a solution of \eqref{eq:Poisson-fixed}. We let $\widehat U = h^{(K)}$.

\end{proof}

\subsection{Regularity of the Smooth Part}

In this subsection, we prove regularity estimates on $\widehat U$.

\subsubsection{Variable Total Charge}

We prove the following regularity estimates on the function $\widehat U$, constructed above as the unique minimiser of $J_V$ over $\dot{W}^{1,2}(\RR^3)$.

\begin{prop}\label{prop:variable-hatU-reg}
Let $\rho \in L^1 \cap L^{\frac{5}{3}}(\br^3)$. Let $\bar U = G \ast \rho$. Then there exists $\widehat U$ satisfying \eqref{eq:hatU-var} and the estimates
\begin{align}
\| \widehat U \|_{L^{3, \infty}} &\leq C \| g \|_{L^1(\br^3)} \exp\left\{C\| \rho \|_{L^1(\br^3)}^{\frac{1}{6}}\| \rho \|_{L^\frac{5}{3}(\br^3)}^{\frac{5}{6}}\right\} \\
\| \nabla \widehat U \|_{L^{\frac{3}{2}, \infty}} &\leq C  \| g \|_{L^1(\br^3)}\exp\left\{C\| \rho \|_{L^1(\br^3)}^{\frac{1}{6}}\| \rho \|_{L^\frac{5}{3}(\br^3)}^{\frac{5}{6}}\right\} \\
\| \widehat U \|_{C^{1, \alpha}} & \leq C  \| g \|_{L^\infty(\br^3)}\exp\left\{C\| \rho \|_{L^1(\br^3)}^{\frac{1}{6}}\| \rho \|_{L^\frac{5}{3}(\br^3)}^{\frac{5}{6}}\right\}, \qquad \text{ for all } \alpha \in (0,1) .\\
\end{align}
\end{prop}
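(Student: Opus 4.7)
The plan is to exploit two facts about $\widehat U$: first, that it is non-positive, and second, that the RHS of its Poisson equation can be controlled in $L^1 \cap L^\infty$ purely in terms of quantities already estimated in Lemma~\ref{lem:Ubar}. Because $\bar U = G \ast \rho \geq 0$ and the minimiser constructed above satisfies $\Delta \widehat U = g e^{\bar U + \widehat U} \geq 0$, the function $-\widehat U$ is a decaying solution of $-\Delta(-\widehat U) = g e^{\bar U + \widehat U}$ with nonnegative right-hand side. Since $g e^{\bar U + \widehat U} \in L^1(\RR^3)$ (as noted after Lemma~\ref{lem:var-Uhat-exist}), the unique $\dot W^{1,2}$ solution admits the representation $-\widehat U = G \ast (g e^{\bar U + \widehat U})$, and the positivity of $G$ immediately yields $\widehat U \leq 0$ pointwise.

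With $\widehat U \leq 0$, one has $e^{\widehat U} \leq 1$, hence the pointwise bound $g e^{\bar U + \widehat U} \leq g \, e^{\|\bar U\|_{L^\infty(\RR^3)}}$. Invoking the $L^\infty$ estimate $\|\bar U\|_{L^\infty} \leq C \|\rho\|_{L^1}^{1/6} \|\rho\|_{L^{5/3}}^{5/6}$ from Lemma~\ref{lem:Ubar} then gives
\begin{equation*}
\|g e^{\bar U + \widehat U}\|_{L^1(\RR^3)} \leq \|g\|_{L^1} \exp\bigl\{C\|\rho\|_{L^1}^{1/6}\|\rho\|_{L^{5/3}}^{5/6}\bigr\},
\end{equation*}
and the analogous bound with $\|g\|_{L^\infty}$ on the left. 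Writing $\widehat U = -G \ast (ge^{\bar U + \widehat U})$ and $\nabla \widehat U = -\nabla G \ast (ge^{\bar U + \widehat U})$, the weak-type convolution estimates for the Riesz kernels $G \in L^{3,\infty}$ and $\nabla G \in L^{3/2,\infty}$ (i.e.\ the same tools used to prove Lemma~\ref{lem:Ubar}) yield the first two estimates in the proposition, since convolution of an $L^1$ function with a weak $L^p$ kernel lands in the same weak space.

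For the $C^{1,\alpha}$ bound, the plan is to apply Calderón--Zygmund theory rather than Schauder (the RHS is only $L^\infty$, not Hölder). Given any $p \in (3,\infty)$, the bound on $\|g e^{\bar U + \widehat U}\|_{L^p}$ (which interpolates between the $L^1$ and $L^\infty$ bounds above, both of which carry the factor $\|g\|_{L^\infty}\exp\{C\|\rho\|_{L^1}^{1/6}\|\rho\|_{L^{5/3}}^{5/6}\}$) gives $\widehat U \in \dot W^{2,p}(\RR^3)$ with the corresponding estimate. The Morrey/Sobolev embedding $\dot W^{2,p} \hookrightarrow C^{1,1-3/p}$ then delivers any Hölder exponent $\alpha \in (0,1)$ by choosing $p$ large. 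The main technical point, and the only one where care is required, is the pointwise nonpositivity of $\widehat U$ that closes the nonlinearity; once that is in hand the remaining estimates are a direct consequence of the elementary linear potential theory already invoked in Lemma~\ref{lem:Ubar}.
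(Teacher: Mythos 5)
Your proposal is correct and follows essentially the same route as the paper: you use the representation $\widehat U = -G \ast (g e^{\bar U + \widehat U})$ to get $\widehat U \leq 0$, dominate the nonlinearity pointwise by $g\,e^{\|\bar U\|_{L^\infty}}$ with $\|\bar U\|_{L^\infty}$ controlled via Lemma~\ref{lem:Ubar}, and then conclude by linear potential theory; the only difference is that for the H\"older bound the paper invokes the estimates for Newtonian potentials from H\"ormander (as in Lemma~\ref{lem:hatU-reg}), whereas you route through Calder\'on--Zygmund $\dot W^{2,p}$ bounds plus Morrey embedding, which is an inessential variation. One small point to patch: the homogeneous embedding $\dot W^{2,p}\hookrightarrow \dot C^{1,1-3/p}$ only controls the H\"older \emph{seminorm} of $\nabla \widehat U$, so to obtain the full $C^{1,\alpha}$ norm you should also record the sup-norm bounds on $\widehat U$ and $\nabla \widehat U$, which follow at once from the same representation by splitting the kernels $G$ and $\nabla G$ near and away from the origin against the $L^1\cap L^\infty$ bound on $g e^{\bar U + \widehat U}$ (and note that this $L^1$ bound carries $\|g\|_{L^1}$ rather than $\|g\|_{L^\infty}$, so the constant in the third estimate depends on both norms of $g$, exactly as in the paper).
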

These estimates will follow from standard regularity theory for the Poisson equation, provided that we can prove suitable integrability estimates on $g e^{\bar U + \widehat U}$. To do this, we first find a representation for $\widehat U$ in terms of the Green's function $G$. First recall that $\widehat U$ satisfies the equation
\be
\Delta \widehat U = g e^{\bar U + \widehat U} .
\ee
Then note that the following convolution with $G$ is a solution of the same equation:
\be
- G \ast (g e^{\bar U + \widehat U}).
\ee
Since $g e^{\bar U + \widehat U} \in L^1$, this convolution belongs to the space $L^{3, \infty}(\RR^3)$. Thus the difference $- G \ast (g e^{\bar U + \widehat U}) - \widehat U$ is a harmonic function decaying at infinity. Then by Liouville's theorem
\be \label{eq:hatU-conv-G}
\widehat U = - G \ast (g e^{\bar U + \widehat U}) .
\ee

From this representation it follows that $\widehat U \leq 0$.
In particular,
\be
g e^{\bar U + \widehat U} \leq g e^{\bar U} .
\ee
Then, for all $p \in [1, + \infty]$,
\begin{align} \label{est:Poi-var-Lp}
\| g e^{\bar U + \widehat U} \|_{L^p(\RR^3)} & \leq \| g e^{\bar U} \|_{L^p(\RR^3)} \\
& \leq e^{\| \bar U \|_{L^\infty(\RR^3)}} \| g \|^{1 - 1/p}_{L^\infty(\RR^3)} \| g \|_{L^1(\RR^3)}^{1/p} < + \infty .
\end{align}

Using this, we may deduce the following lemma.

\begin{lem} \label{lem:hatU-reg}
Assume that $\bar U \in L^\infty(\br^3)$. Then $\widehat U \in L^{3, \infty}\cap C^{1, \alpha}(\br^3)$ for all $\alpha \in (0,1)$, with the estimates
\be
\| \widehat U \|_{L^{3, \infty}(\br^3)} \leq C e^{\| \bar U \|_{L^{\infty}(\br^3)}} \| g \|_{L^1(\br^3)}, \qquad \| \widehat U \|_{C^{1,\alpha}(\br^3)} \leq C \| g \|_{L^\infty(\br^3)} \, e^{\| \bar U \|_{L^\infty(\br^3)}} .
\ee
\end{lem}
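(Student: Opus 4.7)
The plan is to exploit the convolution representation
\[
\widehat U = - G \ast \rho, \qquad \rho := g\, e^{\bar U + \widehat U},
\]
that was just established in \eqref{eq:hatU-conv-G}, together with the $L^p$ bounds on $\rho$ already recorded in \eqref{est:Poi-var-Lp}. Since $\widehat U \leq 0$, we have the pointwise bound $\rho \leq g\,e^{\|\bar U\|_{L^\infty}}$, which gives
\[
\|\rho\|_{L^1(\br^3)} \leq e^{\|\bar U\|_{L^\infty}}\|g\|_{L^1(\br^3)}, \qquad \|\rho\|_{L^\infty(\br^3)} \leq e^{\|\bar U\|_{L^\infty}}\|g\|_{L^\infty(\br^3)}.
\]
These are the only two ingredients needed.

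For the weak-$L^3$ estimate I would observe that the Newtonian kernel $G(x)=(4\pi|x|)^{-1}$ lies in $L^{3,\infty}(\br^3)$. The generalized Young inequality for Lorentz spaces ($L^1 \ast L^{3,\infty} \hookrightarrow L^{3,\infty}$) then yields
\[
\|\widehat U\|_{L^{3,\infty}(\br^3)} \leq C \,\|G\|_{L^{3,\infty}} \,\|\rho\|_{L^1(\br^3)} \leq C\, e^{\|\bar U\|_{L^\infty}}\|g\|_{L^1(\br^3)},
\]
which is the first estimate.

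For the $C^{1,\alpha}$ bound I would proceed in two steps. First, splitting the convolution at radius one and using $|G(x)| = (4\pi|x|)^{-1}$,
\[
|\widehat U(x)| \leq \frac{1}{4\pi}\!\int_{|x-y|<1}\!\!\frac{\rho(y)}{|x-y|}\,dy + \frac{1}{4\pi}\!\int_{|x-y|\geq 1}\!\!\frac{\rho(y)}{|x-y|}\,dy \leq C\bigl(\|\rho\|_{L^\infty}+\|\rho\|_{L^1}\bigr),
\]
so that $\widehat U \in L^\infty(\br^3)$ with $\|\widehat U\|_{L^\infty}\leq C\,e^{\|\bar U\|_{L^\infty}}\bigl(\|g\|_{L^\infty}+\|g\|_{L^1}\bigr)$. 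Second, with $\Delta\widehat U=\rho \in L^\infty(\br^3)$ and $\widehat U\in L^\infty(\br^3)$, I would apply interior Calderón–Zygmund/Schauder-type estimates to the Poisson equation on every unit ball: for each $x\in\br^3$ and any $\alpha\in(0,1)$,
\[
\|\widehat U\|_{C^{1,\alpha}(B_1(x))} \leq C_\alpha\bigl(\|\widehat U\|_{L^\infty(B_2(x))} + \|\rho\|_{L^\infty(B_2(x))}\bigr).
\]
Taking the supremum over $x$ upgrades this to a uniform $C^{1,\alpha}(\br^3)$ bound of the claimed form, using $\|g\|_{L^1}=1$ (or absorbing it into $C$).

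The main delicate point is the passage from local Schauder-type control to a global $C^{1,\alpha}$ estimate on the unbounded domain $\br^3$. The standard interior estimate is strictly local, so one really needs the uniform $L^\infty$ bound on $\widehat U$ itself — which in turn requires the decay of $\rho$ supplied by $g\in L^1$, not just its boundedness. This is what makes the local-to-global covering argument work with a constant independent of $x$, and this is the step where the hypothesis $g\in L^1\cap L^\infty$ is really used simultaneously.
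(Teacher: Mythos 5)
Your argument is correct and follows essentially the same route as the paper: the paper likewise starts from the representation $\widehat U = -G\ast\bigl(ge^{\bar U+\widehat U}\bigr)$ together with the bounds $\|\Delta\widehat U\|_{L^1}\le e^{\|\bar U\|_{L^\infty}}\|g\|_{L^1}$ and $\|\Delta\widehat U\|_{L^\infty}\le e^{\|\bar U\|_{L^\infty}}\|g\|_{L^\infty}$, and then simply cites the potential estimates of H\"ormander (Section 4.5) where you instead spell out the Lorentz-space Young inequality for the $L^{3,\infty}$ bound and a kernel-splitting $L^\infty$ bound plus interior Schauder/covering argument for the $C^{1,\alpha}$ bound. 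The extra $\|g\|_{L^1}$ factor appearing in your $C^{1,\alpha}$ estimate is harmless, since $g$ is normalised in $L^1$ throughout the paper (and indeed some dependence on $\|g\|_{L^1}$ is unavoidable if one wants to control $\|\widehat U\|_{L^\infty}$ itself).
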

\begin{proof}
We use the representation \eqref{eq:hatU-conv-G} in combination with the $L^p$ estimates \eqref{est:Poi-var-Lp}.

In the case $p = 1$, we have
\be
\| \Delta \widehat U \|_{L^1(\RR^3)} \leq e^{\| \bar U \|_{L^\infty(\RR^3)}} \| g \|_{L^1(\RR^3)}
\ee
By \cite[Section 4.5]{Hormander}, $\widehat U \in L^{3, \infty}(\RR^3)$ and $\widehat E \in L^{\frac{3}{2}, \infty} \cap L^\infty(\br^3)$, with
\be
\| \widehat U \|_{L^{3, \infty}(\br^3)} \leq C e^{\| \bar U \|_{L^\infty(\RR^3)}} \| g \|_{L^1(\RR^3)} , \quad
\| \widehat E \|_{L^{\frac{3}{2}, \infty}(\br^3)} \leq C e^{\| \bar U \|_{L^\infty(\RR^3)}} \| g \|_{L^1(\RR^3)} .
\ee

In the case $p = \infty$, we have 
\be \label{est:hatU-var-linfty}
\| \Delta \widehat U \|_{L^\infty(\RR^3)} \leq e^{\| \bar U \|_{L^\infty(\RR^3)}} \| g \|_{L^\infty(\RR^3)} .
\ee
By \cite[Section 4.5]{Hormander}, $\widehat E \in C^{0, \alpha}(\br^3)$ for all $\alpha \in (0,1)$, with
\be
\| \widehat E \|_{C^{0,\alpha}(\br^3)} \leq C_g e^{\| \bar U \|_{L^\infty(\br^3)}} .
\ee

\end{proof}

\subsubsection{Fixed Total Charge}

In this case, $\widehat U$ satisfies
\be \label{eq:Poisson-fixed}
\Delta \widehat U = \frac{g e^{\widehat U + \bar U}}{\int_{\br^3} g e^{\widehat U + \bar U} \di x} .
\ee
We will perform a similar analysis as in the variable charge case above. The idea is to prove integrability estimates for $\Delta \widehat U$. In the fixed charge case, we always have
\be
\| \Delta \widehat U \|_{L^1(\RR^3)} = 1 .
\ee
This implies that $\widehat U \in L^{3, \infty}(\br^3)$ and that for some universal constant $C$,
\be \label{est:Uhat-fixed-L1-unif}
\| \widehat U \|_{L^{3, \infty}(\br^3)} \leq C, \quad \| \widehat E \|_{L^{\frac{3}{2}, \infty}(\br^3)} \leq C  .
\ee

We next consider an $L^\infty$ estimate.
Once again, we have the representation of $\widehat U$ in terms of a convolution with the fundamental solution $G$. This representation implies that $\widehat U \leq 0$, and so
\be
g e^{\bar U + \widehat U} \leq g e^{\| \bar U\|_{L^\infty(\RR^3)}} .
\ee
In order to prove an $L^\infty$ estimate on $\Delta \widehat U$, the remaining step is to find a lower bound for the integral
\be
\int_{\br^3} g e^{\widehat U + \bar U} \di x .
\ee

To do this, we use the fact that $\bar U \geq 0$ to deduce that
\be
\int_{\br^3} g e^{\widehat U + \bar U} \di x \geq \int_{\br^3} g e^{\widehat U} \di x .
\ee
Then, by estimate \eqref{est:Uhat-fixed-L1-unif} and Lemma~\ref{lem:Bouchut-Marc}, there exists a constant $C_g > 0$ depending on $g$ only such that
\be
\label{eq:g}
\int_{\br^3} g e^{\widehat U + \bar U} \di x \geq C_g > 0.
\ee
Thus
\be \label{est:hatU-fixed-linfty}
\| \Delta \widehat U \|_{L^\infty(\RR^3)} \leq C_g e^{\| \bar U \|_{L^\infty(\br^3)}} .
\ee
From these estimates we deduce the following proposition.

\begin{prop} \label{prop:fixed-hatU-reg}
Let $\rho \geq 0$ satisfy $\| \rho \|_{L^1(\br^3)} = 1$ and $\rho \in L^{\frac{5}{3}}(\br^3)$. Let $\bar U$ be the unique $\dot W^{1,2}(\br^3)$ solution of \eqref{eq:Poisson-sing}. Then there exists a solution of \eqref{eq:Poisson-fixed}, which satisfies for all $\alpha \in (0,1)$,
\be
\| \widehat U \|_{L^{3,\infty}(\br^3)} \leq C, \quad \| \widehat E \|_{L^{\frac{3}{2},\infty}(\br^3)} \leq C, \quad \| \widehat U \|_{C^{1,\alpha}(\br^3)} \leq \exp{\left [C_{\alpha,g} \left (\| \rho \|^{\frac{5}{6}}_{L^{\frac{5}{3}}(\br^3)}\right ) \right ]} .
\ee
\end{prop}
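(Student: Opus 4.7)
The plan is to assemble the proposition directly from the material already developed in the paragraphs preceding its statement, together with Lemma~\ref{lem:Ubar} and the standard elliptic regularity quoted from \cite[Section 4.5]{Hormander}. The first two estimates, namely $\|\widehat U\|_{L^{3,\infty}} \leq C$ and $\|\widehat E\|_{L^{3/2,\infty}} \leq C$, have in fact been established immediately above the proposition in display \eqref{est:Uhat-fixed-L1-unif}. They follow from the identity $\|\Delta \widehat U\|_{L^1(\RR^3)} = 1$ (which is a direct consequence of the Poisson equation \eqref{eq:Poisson-fixed} together with the normalisation $\int g = 1$) and the weak-$L^p$ mapping properties of the Riesz potentials. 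So the only substantive step left is the $C^{1,\alpha}$ bound.

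For this I would start from the $L^\infty$ bound on the Laplacian established in \eqref{est:hatU-fixed-linfty}, namely
\begin{equation}
\|\Delta \widehat U\|_{L^\infty(\RR^3)} \leq C_g \, e^{\|\bar U\|_{L^\infty(\RR^3)}}.
\end{equation}
Since $\|\rho\|_{L^1} = 1$, the $L^\infty$ estimate on $\bar U$ from Lemma~\ref{lem:Ubar} gives
\begin{equation}
\|\bar U\|_{L^\infty(\RR^3)} \leq C \|\rho\|_{L^{5/3}(\RR^3)}^{5/6},
\end{equation}
so that
\begin{equation}
\|\Delta \widehat U\|_{L^\infty(\RR^3)} \leq C_g \exp\!\left(C \|\rho\|_{L^{5/3}(\RR^3)}^{5/6}\right).
\end{equation}

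Combining this $L^\infty$ control of $\Delta \widehat U$ with the weak-$L^{3/2}$ control of $\nabla\widehat U$ already obtained, I would invoke the Calderón--Zygmund / Riesz potential regularity cited in \cite[Section~4.5]{Hormander} -- exactly as was done in the last display of Lemma~\ref{lem:hatU-reg} in the variable-charge setting -- to deduce $\widehat E \in C^{0,\alpha}(\RR^3)$ for every $\alpha \in (0,1)$, with
\begin{equation}
\|\widehat U\|_{C^{1,\alpha}(\RR^3)} \leq C_{\alpha,g}\, \exp\!\left(C \|\rho\|_{L^{5/3}(\RR^3)}^{5/6}\right),
\end{equation}
which absorbing constants into $C_{\alpha,g}$ is the asserted bound.

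There is no serious obstacle: both the construction of $\widehat U$ and the two $L^1$/$L^\infty$ bounds on $\Delta \widehat U$ are already in place. The only thing to check carefully, compared to the variable-charge case treated in Lemma~\ref{lem:hatU-reg}, is that the lower bound \eqref{eq:g} on $\int g\, e^{\bar U+\widehat U}\di x$ depends only on $g$ (and not on $\rho$). This is exactly what Lemma~\ref{lem:Bouchut-Marc} provides via the uniform-in-$\rho$ estimate $\|\widehat U\|_{L^{3,\infty}} \leq C$, which is why the $C^{1,\alpha}$ bound picks up only the exponential dependence on $\|\rho\|_{L^{5/3}}^{5/6}$ coming from $\|\bar U\|_{L^\infty}$, and no further $\rho$-dependence from the normalisation in the denominator of \eqref{eq:Poisson-fixed}.
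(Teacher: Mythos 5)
Your proposal is correct and follows essentially the same route as the paper: the weak-type bounds come from $\|\Delta \widehat U\|_{L^1(\br^3)}=1$, the $L^\infty$ bound on $\Delta\widehat U$ comes from $\widehat U\le 0$ together with the lower bound \eqref{eq:g} on the normalising integral via Lemma~\ref{lem:Bouchut-Marc}, and the $C^{1,\alpha}$ estimate then follows from the same elliptic regularity used in Lemma~\ref{lem:hatU-reg}, with the $\rho$-dependence entering only through $\|\bar U\|_{L^\infty(\br^3)}\le C\|\rho\|_{L^{5/3}(\br^3)}^{5/6}$ from Lemma~\ref{lem:Ubar}. Your closing remark that the constant $C_g$ in \eqref{eq:g} is uniform in $\rho$, thanks to the $\rho$-independent bound \eqref{est:Uhat-fixed-L1-unif}, is exactly the point the paper's argument hinges on.
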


This is proved using the same Sobolev embedding estimates as in the variable charge case, using the corresponding $L^p$ estimates on $\Delta \widehat U$ proved above.

\subsection{Stability estimates} \label{sec:electric-stability}

We want to extend to the VPME setting the uniqueness results in the style of Loeper for the case of solutions with $\rho_f \in L^\infty(\br^3)$. For this, we will need some stability estimates for the electrostatic potential with respect to the charge density. The aim of this section is to prove the following results.

\begin{prop}[Stability estimates: variable total charge]
\label{prop:stable variable}
Let $\rho_1, \rho_2 \in L^\infty(\br^3)$ be probability densities on $\br^3$. Let $\bar U_i \in \dot W^{1,2} \cap L^\infty(\br^3)$ solve respectively for $i=1,2$
\be
-\Delta \bar U_i = \rho_i .
\ee
Let $\widehat U_i \in L^{3, \infty} \cap L^\infty \cap \dot W^{1,2}(\br^3)$ satisfy
\be
\Delta \widehat U_i = g e^{\widehat U_i + \bar U_i} .
\ee
Then
\begin{align}
&\| \nabla \bar U_1 - \nabla \bar U_2 \|_{L^2(\br^3)} \leq \max_i \| \rho_i \|^{\frac{1}{2}}_{L^\infty(\br^3)} W_2(\rho_1, \rho_2), \\
& \| \nabla \widehat U_1 - \nabla \widehat U_2 \|_{L^2(\br^3)} \leq C \max_i \| \rho_i \|^{\frac{1}{2}}_{L^\infty(\br^3)} W_2(\rho_1, \rho_2) ,
\end{align}
where
\be
C = \| g\|^{\frac12}_{L^{\frac{3}{2}}(\br^3)} \exp{\left \{ C_0 \left [ 1 + \max_i{\| \bar U_i \|_{L^\infty(\br^3)}} + \max_i{\| \widehat U_i \|_{L^\infty(\br^3)}} \right ] \right \}} .
\ee
\end{prop}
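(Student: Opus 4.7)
The first bound is Loeper's classical Coulombic $W_2$ estimate and the plan is to reproduce its proof. Integration by parts (justified since $\nabla\bar U_i\in L^2$, $\bar U_i\in L^\infty$ and $\rho_i\in L^1$) gives
\[
\|\nabla\bar U_1-\nabla\bar U_2\|_{L^2}^2=\int_{\br^3}(\bar U_1-\bar U_2)(\rho_1-\rho_2)\,dx.
\]
I would introduce an optimal transport plan $\pi$ realising $W_2(\rho_1,\rho_2)$, rewrite the right-hand side as $\int[(\bar U_1-\bar U_2)(x)-(\bar U_1-\bar U_2)(y)]\,d\pi(x,y)$, and express the bracket as an integral of $\nabla(\bar U_1-\bar U_2)\cdot(x-y)$ along the straight segment from $y$ to $x$. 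Cauchy--Schwarz then yields a factor $W_2(\rho_1,\rho_2)$ and the other factor is controlled by $\int_0^1\|\rho_s\|_{L^\infty}\|\nabla(\bar U_1-\bar U_2)\|_{L^2}^2\,ds$, where $\rho_s$ is the McCann displacement interpolant; the displacement convexity $\|\rho_s\|_{L^\infty}\le\max_i\|\rho_i\|_{L^\infty}$ closes the estimate.

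For the second bound, set $W:=\widehat U_1-\widehat U_2$ and $V:=\bar U_1-\bar U_2$. Subtracting the two Euler--Lagrange equations and applying the fundamental theorem of calculus to the two interpolating segments produces
\[
\Delta W=g(aW+bV),
\]
with
\[
a:=e^{\bar U_1}\!\int_0^1\! e^{(1-s)\widehat U_2+s\widehat U_1}\,ds\ge 0,\qquad b:=e^{\widehat U_2}\!\int_0^1\! e^{(1-s)\bar U_2+s\bar U_1}\,ds\ge 0.
\]
Testing against $W\in\dot W^{1,2}$ and integrating by parts yields the identity
\[
\|\nabla W\|_{L^2}^2+\int_{\br^3} g\,a\,W^2\,dx=-\int_{\br^3} g\,b\,W\,V\,dx,
\]
in which the monotone term $\int g a W^2$ on the left is the key structural gain from the $\widehat U$-nonlinearity.

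The plan from here is to extract the factor $\|g\|_{L^{3/2}}^{1/2}$ by using this monotone term as an absorbing device. From the Green's-function representations of the previous subsections one has $\bar U_i=G\ast\rho_i\ge 0$ and $\widehat U_i\le 0$; consequently
\[
a\ge c_-:=e^{-\max_i\|\widehat U_i\|_{L^\infty}},\qquad b\le c_+:=e^{\max_i\|\bar U_i\|_{L^\infty}}.
\]
Weighted Cauchy--Schwarz and Young's inequality balanced with $c_-$ give
\[
\Bigl|\int g b W V\,dx\Bigr|\le c_+\Bigl(\int gW^2\Bigr)^{\!1/2}\Bigl(\int gV^2\Bigr)^{\!1/2}\le\frac{c_-}{2}\int gW^2+\frac{c_+^2}{2c_-}\int gV^2,
\]
and absorbing the first piece into $c_-\!\int gW^2\le\int gaW^2$ on the left leaves $\|\nabla W\|_{L^2}^2\le\frac{c_+^2}{2c_-}\int gV^2$. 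Hölder with $g\in L^{3/2}$ and the Sobolev embedding $\dot W^{1,2}(\br^3)\hookrightarrow L^6(\br^3)$ then give $\int gV^2\le C\|g\|_{L^{3/2}}\|\nabla V\|_{L^2}^2$, and inserting the first estimate for $\|\nabla V\|_{L^2}$ yields the claim.

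The main obstacle I anticipate is precisely this bookkeeping step: a naive Cauchy--Schwarz on the right-hand side, without first isolating and then absorbing the monotone term, produces the weaker constant $\|g\|_{L^{3/2}}$ in place of the sharper $\|g\|_{L^{3/2}}^{1/2}$. The sign information $\bar U_i\ge 0$ and $\widehat U_i\le 0$ (not merely the $L^\infty$ bounds on them) is what allows $c_\pm$ to enter in the form $c_+/c_-^{1/2}\le\exp\{\max_i\|\bar U_i\|_{L^\infty}+\max_i\|\widehat U_i\|_{L^\infty}\}$ compatible with the statement.
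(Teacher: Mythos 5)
Your proposal is correct and follows essentially the same route as the paper: the bound for $\bar U$ is exactly Loeper's $W_2$ estimate (which the paper simply cites), and for $\widehat U$ you test the equation for the difference with $\widehat U_1-\widehat U_2$, split the exponential difference into a monotone term in $\widehat U$ (your $\int g\,a\,W^2$, which the paper obtains via the pointwise mean-value inequalities) plus a term driven by $\bar U_1-\bar U_2$, absorb the former by Young's inequality with parameter, and close with H\"older against $g\in L^{3/2}$ and the Sobolev embedding $\dot W^{1,2}\hookrightarrow L^6$, yielding the same constant $\| g\|_{L^{3/2}}^{1/2}\exp\{C_0[1+\max_i\|\bar U_i\|_{L^\infty}+\max_i\|\widehat U_i\|_{L^\infty}]\}$. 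The only differences are presentational (integral-form versus pointwise mean value theorem, and reproving rather than citing Loeper), so no changes are needed.
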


\begin{prop}[Stability estimates: fixed total charge]
\label{prop:stable fixed}
Let $\rho_1, \rho_2 \in L^\infty(\br^3)$ be probability densities on $\br^3$. Let $\bar U_i \in \dot W^{1,2} \cap L^\infty(\br^3)$ solve respectively for $i=1,2$
\be
-\Delta \bar U_i = \rho_i .
\ee
Let $\widehat U_i \in L^{3, \infty} \cap L^\infty \cap \dot W^{1,2}(\br^3)$ satisfy
\be
\Delta \widehat U_i = \frac{g e^{\widehat U_i + \bar U_i}}{\int_{\br^3} g e^{\widehat U_i + \bar U_i} \di x } .
\ee
Then
\begin{align}
& \| \nabla \bar U_1 - \nabla \bar U_2 \|_{L^2(\br^3)} \leq \max_i \| \rho_i \|^{\frac{1}{2}}_{L^\infty(\br^3)} W_2(\rho_1, \rho_2), \\
& \| \nabla \widehat U_1 - \nabla \widehat U_2 \|_{L^2(\br^3)} \leq C \max_i \| \rho_i \|^{\frac{1}{2}}_{L^\infty(\br^3)} W_2(\rho_1, \rho_2) ,
\end{align}
where
\be
C = \| g\|^{\frac12}_{L^{\frac{3}{2}}(\br^3)} \exp{\left \{ C_0 \left [ 1 + \max_i{\| U_i \|_{L^\infty(\br^3)}} + \max_i{\| \widehat U_i \|_{L^\infty(\br^3)}} \right ] \right \}} .
\ee
\end{prop}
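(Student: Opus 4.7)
The plan is to establish both inequalities in the spirit of the classical Loeper argument, but exploiting the algebraic structure of the fixed-charge nonlinearity to absorb the normalization factor cleanly. For the first estimate, since $-\Delta(\bar U_1 - \bar U_2) = \rho_1 - \rho_2$ and $\bar U_i \in \dot W^{1,2}(\br^3)$, integration by parts gives $\|\nabla(\bar U_1 - \bar U_2)\|_{L^2(\br^3)}^2 = \int_{\br^3}(\bar U_1 - \bar U_2)(\rho_1 - \rho_2)\,\di x$. Combining the Benamou-Brenier dynamical formulation of $W_2$ with Cauchy-Schwarz and Loeper's displacement-convexity bound $\max_s\|\rho_s\|_{L^\infty} \leq \max_i\|\rho_i\|_{L^\infty}$ along the geodesic $\rho_s$ between $\rho_1$ and $\rho_2$ yields the duality estimate
\[
\left|\int_{\br^3} f(\rho_1 - \rho_2)\,\di x\right| \leq \max_i\|\rho_i\|_{L^\infty(\br^3)}^{1/2}\, W_2(\rho_1,\rho_2)\, \|\nabla f\|_{L^2(\br^3)}
\]
for every $f \in \dot W^{1,2}(\br^3)$, from which the first bound follows.

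For the second bound, the key idea is to pass to the shifted potentials $\phi_i := U_i - \log Z_i$ with $Z_i := \int_{\br^3} g e^{U_i}\,\di x$, so that the fixed-charge nonlinearity simplifies to $\mu_i = g e^{\phi_i}$ and $\Delta U_i = \mu_i - \rho_i$. I would then work with $\tilde\psi := U_1 - U_2 = (\widehat U_1 - \widehat U_2) + (\bar U_1 - \bar U_2)$, which lies in $\dot W^{1,2}(\br^3)$ by hypothesis and satisfies $\Delta \tilde\psi = g(e^{\phi_1} - e^{\phi_2}) - (\rho_1 - \rho_2) \in L^1 \cap L^\infty(\br^3)$. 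Multiplying by $\tilde\psi$ and integrating by parts (justified by approximating $\tilde\psi$ with functions in $C_c^\infty(\br^3)$ in the $\dot W^{1,2}$ norm, since $\tilde\psi \in L^6$ pairs with $\Delta \tilde\psi \in L^{6/5}$), and decomposing $\tilde\psi = (\phi_1 - \phi_2) + \log(Z_1/Z_2)$, the additive constant $\log(Z_1/Z_2)$ drops out of both pairings because $\int_{\br^3} g e^{\phi_i}\,\di x = \int_{\br^3}\rho_i\,\di x = 1$. One is left with the identity
\[
\|\nabla \tilde\psi\|_{L^2(\br^3)}^2 + \int_{\br^3} g(\phi_1 - \phi_2)(e^{\phi_1} - e^{\phi_2})\,\di x = \int_{\br^3} \tilde\psi(\rho_1 - \rho_2)\,\di x.
\]
Monotonicity of $x\mapsto e^x$ makes the middle term non-negative; dropping it and applying the duality bound above to the right-hand side yields $\|\nabla(U_1 - U_2)\|_{L^2} \leq \max_i\|\rho_i\|_{L^\infty}^{1/2}W_2(\rho_1,\rho_2)$, and the triangle inequality then gives $\|\nabla(\widehat U_1 - \widehat U_2)\|_{L^2} \leq 2\max_i\|\rho_i\|_{L^\infty}^{1/2} W_2(\rho_1,\rho_2)$, which is in fact stronger than the stated bound.

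The principal obstacle is purely technical: making the integration by parts and the cancellation of the constant $\log(Z_1/Z_2)$ fully rigorous, and checking that $\tilde\psi$ has the regularity needed for the Loeper-type duality estimate. If one prefers to avoid the shift and work directly with $W := \widehat U_1 - \widehat U_2$, one must decompose $\mu_1 - \mu_2 = Z_1^{-1} g(e^{U_1} - e^{U_2}) + g e^{U_2}(Z_1^{-1} - Z_2^{-1})$, test against $W$, use the mean-value identity $e^{U_1}-e^{U_2} = \nu (U_1 - U_2)$ with $\nu = \int_0^1 e^{sU_1 + (1-s)U_2}\,ds$ and split $U_1 - U_2 = W + (\bar U_1 - \bar U_2)$, then handle the normalization term via the Bouchut lower bound $Z_i \geq C_g > 0$ from Lemma~\ref{lem:Bouchut-Marc} combined with Sobolev and Hölder estimates. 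This more hands-on route, parallel to the variable-charge case, naturally produces the constant $C = \|g\|_{L^{3/2}}^{1/2}\exp\{C_0[1 + \max_i\|U_i\|_{L^\infty} + \max_i\|\widehat U_i\|_{L^\infty}]\}$ appearing in the statement.
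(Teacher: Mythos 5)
Your proposal is correct, but for the key estimate on $\widehat U_1-\widehat U_2$ it takes a genuinely different route from the paper. The paper first invokes Loeper's estimate for $\bar U_1-\bar U_2$ and then proves a separate nonlinear stability lemma (Lemma~\ref{lem:fixed-stability}) for the difference of the two $\widehat U$-equations with $\phi_i=\bar U_i$ as given data: there the normalisation constants $m_i=\int_{\br^3} g e^{\widehat U_i+\bar U_i}\di x$ are handled by adding and subtracting $\log m_i$, the sign-definite term is retained with its constant $c_1$ and balanced against the error term via Young's inequality, the lower bound on $m_i$ comes from Bouchut's Lemma~\ref{lem:Bouchut-Marc}, and a H\"older--Sobolev step produces the constant $\| g\|_{L^{3/2}}^{1/2}\exp\{C_0[\cdots]\}$. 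You instead work with the full potential $U_i=\bar U_i+\widehat U_i$, which solves $\Delta U_i=g e^{\phi_i}-\rho_i$ with $\phi_i=U_i-\log Z_i$; after this shift the nonlinearity is pointwise monotone, and since $g e^{\phi_i}$ and $\rho_i$ all have unit mass the additive constants $\log Z_i$ drop out of every pairing, so the monotone term can simply be discarded and the Loeper/Benamou--Brenier duality controls $\int_{\br^3}(U_1-U_2)(\rho_1-\rho_2)\di x$, giving $\|\nabla(U_1-U_2)\|_{L^2}\le \max_i\|\rho_i\|_{L^\infty}^{1/2}W_2(\rho_1,\rho_2)$ and, by the triangle inequality, the $\widehat U$ bound with the universal constant $2$. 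This avoids Bouchut's lemma and all exponential dependence on $\|\bar U_i\|_{L^\infty}$ and $\|\widehat U_i\|_{L^\infty}$; the technical points you flag (extending the weak form to test functions in $\dot W^{1,2}(\br^3)$, and the duality estimate for $f\in\dot W^{1,2}(\br^3)$) are justified exactly as in the paper's own arguments, since the right-hand sides lie in $L^1\cap L^\infty\subset L^{6/5}$ and pair with $L^6$, while $Z_i\in(0,\infty)$ because $U_i\in L^\infty$. Two caveats: your constant $2$ is not literally of the stated form (for very spread-out $g$ the prefactor $\|g\|_{L^{3/2}}^{1/2}$ can make the stated $C$ smaller than $2$), though a bound of this type is all that is used in Lemmas~\ref{lem:I2} and \ref{lem:I4}, so nothing is lost for the paper's purposes; and your argument, unlike the paper's, uses that $\bar U_i$ is generated by the same $\rho_i$ entering the Wasserstein distance, i.e.\ it proves the proposition directly rather than the more modular stability statement of Lemma~\ref{lem:fixed-stability} (which holds for arbitrary non-negative data $\phi_i$).
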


To prove these results, we first recall the following estimate from  \cite[Theorem 2.9]{Loeper}. Note that in the original statement it is assumed that the densities have finite second moments.
However, by approximation, this assumption can be dropped.

\begin{lem}[Stability for $\bar U$]
Let $\rho_1, \rho_2 \in L^\infty(\br^3)$ be probability densities on $\br^3$. Let $\bar U_i$ solve respectively for $i=1,2$
\be
-\Delta \bar U_i = \rho_i, \qquad \bar U_i(x) \to 0 \text{ as } |x| \to \infty .
\ee
Then
\be
\| \nabla \bar U_1 - \nabla \bar U_2 \|_{L^2(\br^3)} \leq \max_i \| \rho_i \|^{\frac{1}{2}}_{L^\infty(\br^3)} W_2(\rho_1, \rho_2) .
\ee
\end{lem}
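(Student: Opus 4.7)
The plan is to follow Loeper's original argument \cite[Theorem 2.9]{Loeper}, which combines an $H^{-1}$-type duality identity with a Brenier/McCann transport coupling. The starting point is the integration by parts identity
\[
\|\nabla \bar U_1 - \nabla \bar U_2\|_{L^2(\br^3)}^2 = \int_{\br^3} (\bar U_1 - \bar U_2)(\rho_1 - \rho_2)\,dx,
\]
valid because $\rho_i \in L^1 \cap L^\infty$ and the assumed decay at infinity of $\bar U_i$ make the boundary terms vanish. The task is then to bound the right-hand side by $\max_i \|\rho_i\|_{L^\infty}^{1/2}\, W_2(\rho_1,\rho_2)\, \|\nabla(\bar U_1-\bar U_2)\|_{L^2}$ and divide out the common factor.

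To this end, I would first add the temporary assumption that $\rho_1,\rho_2$ have finite second moments, so that $W_2(\rho_1,\rho_2)<\infty$. Let $T=\nabla\Phi$ be Brenier's optimal transport map from $\rho_1$ to $\rho_2$, and define the displacement interpolation $\rho_s:=((1-s)\,\mathrm{id}+sT)_{\#}\rho_1$, $s\in[0,1]$, with associated velocity field $v_s$ characterised by $v_s\circ((1-s)\,\mathrm{id}+sT)=T-\mathrm{id}$, so that $\partial_s\rho_s+\nabla\cdot(\rho_s v_s)=0$. Writing $\rho_1-\rho_2=-\int_0^1 \partial_s\rho_s\,ds$ and integrating the continuity equation against $\bar U_1-\bar U_2$,
\[
\int_{\br^3}(\bar U_1-\bar U_2)(\rho_1-\rho_2)\,dx = -\int_0^1\!\!\int_{\br^3}\nabla(\bar U_1-\bar U_2)\cdot v_s\,\rho_s\,dx\,ds.
\]
Cauchy--Schwarz splits the right-hand side as
\[
\left(\int_0^1\!\!\int_{\br^3}|\nabla(\bar U_1-\bar U_2)|^2\rho_s\,dx\,ds\right)^{1/2}\!\left(\int_0^1\!\!\int_{\br^3}|v_s|^2\rho_s\,dx\,ds\right)^{1/2}.
\]
The second factor equals $W_2(\rho_1,\rho_2)$ by the constant-speed-geodesic property of $\rho_s$, while the first is controlled using the key McCann $L^\infty$ maximum principle for displacement interpolations,
\[
\|\rho_s\|_{L^\infty(\br^3)} \leq \max_i \|\rho_i\|_{L^\infty(\br^3)} \qquad \forall\,s\in[0,1],
\]
which follows from the Monge--Amp\`ere equation for the convex Brenier potential $\Phi$. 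This yields the claim in the finite-second-moment case.

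Finally, the hypothesis of finite second moments is removed by approximation, as flagged in the statement: truncate to $\rho_i^R:=\rho_i\mathbbm{1}_{B_R}/\|\rho_i\mathbbm{1}_{B_R}\|_{L^1}$, which has compact support and $L^\infty$ norm bounded by $(1+o(1))\|\rho_i\|_{L^\infty}$; apply the estimate above to $\rho_i^R$; then pass to the limit $R\to\infty$ using lower semicontinuity of $W_2$ under narrow convergence and the convergence $\nabla\bar U_i^R\to\nabla\bar U_i$ in $L^2_{\mathrm{loc}}$ guaranteed by the uniform bounds from Lemma~\ref{lem:Ubar}. The main delicate point of the whole argument is the $L^\infty$-preservation bound along the McCann interpolation, which is the ingredient that converts the distance $W_2$ (rather than a weaker or stronger one) into the correct factor here; everything else is routine integration by parts plus Cauchy--Schwarz.
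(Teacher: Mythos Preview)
Your approach is exactly what the paper does: it simply cites \cite[Theorem 2.9]{Loeper} and remarks that the finite-second-moment hypothesis can be removed by approximation. You have gone further and reproduced Loeper's argument (Brenier map, displacement interpolation, McCann's $L^\infty$ bound along the geodesic, Cauchy--Schwarz), which is correct.

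One small point in your approximation step: lower semicontinuity of $W_2$ under narrow convergence gives $W_2(\rho_1,\rho_2)\le\liminf_R W_2(\rho_1^R,\rho_2^R)$, which is the wrong inequality for passing to the limit on the right-hand side. What you actually need is $\limsup_R W_2(\rho_1^R,\rho_2^R)\le W_2(\rho_1,\rho_2)$; this is obtained by truncating an optimal coupling $\pi$ for $(\rho_1,\rho_2)$ rather than truncating the marginals independently (or by noting that if $W_2(\rho_1,\rho_2)=+\infty$ the estimate is vacuous). Similarly, on the left-hand side you want lower semicontinuity of the full $L^2$ norm, not just $L^2_{\rm loc}$ convergence; weak $L^2$ convergence of $\nabla\bar U_i^R$ suffices. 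These are routine fixes and do not affect the structure of the argument.
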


The next step is to control the smoother part of the potential in terms of the singular part.

\subsubsection{Variable Total Charge}

{

\begin{lem}[Stability for $\widehat U$: variable total charge]
Let $\phi_1, \phi_2 \in L^{3,\infty} \cap L^\infty \cap \dot W^{1,2}(\br^3)$ be given non-negative functions. Let $\widehat U_1, \widehat U_2 \in L^{3, \infty} \cap L^\infty \cap \dot W^{1,2}(\br^3)$ satisfy
\be
\Delta \widehat U_i = g e^{\widehat U_i + \phi_i}, \; i=1,2 .
\ee
Then
\be
\| \nabla \widehat U_1 - \nabla \widehat U_2 \|^2_{L^2(\br^3)} \leq C \, \| \nabla \phi_1 - \nabla \phi_2 \|_{L^2(\br^3)}^2 ,
\ee
where, for some uniform constant $C_0$,
\be
C = \| g\|_{L^{\frac{3}{2}}(\br^3)} \exp{\left \{ C_0 \left [ 1 + \max_{i=1,2}{\| \phi_i \|_{L^\infty(\br^3)}} + \max_{i=1,2}{\| \widehat U_i \|_{L^\infty(\br^3)}} \right ] \right \}} .
\ee

\end{lem}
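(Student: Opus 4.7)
The plan is to test the difference of the two Poisson equations against $\widehat U_1 - \widehat U_2$ and integrate by parts. Subtracting the equations yields
\begin{equation*}
\Delta(\widehat U_1 - \widehat U_2) \;=\; g\bigl(e^{\widehat U_1 + \phi_1} - e^{\widehat U_2 + \phi_2}\bigr),
\end{equation*}
and multiplication by $\widehat U_1 - \widehat U_2$ -- whose decay, guaranteed by the $\dot W^{1,2}$ hypothesis and the Riesz-potential representation $\widehat U_i = -G\ast(g e^{\widehat U_i + \phi_i})$ used in Section~\ref{sec:electric} -- makes a standard cut-off / dominated-convergence argument deliver
\begin{equation*}
\int_{\RR^3} |\nabla(\widehat U_1 - \widehat U_2)|^2\,dx \;=\; -\int_{\RR^3} g\bigl(e^{\widehat U_1 + \phi_1} - e^{\widehat U_2 + \phi_2}\bigr)(\widehat U_1 - \widehat U_2)\,dx.
\end{equation*}

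The next step is to split the exponential difference so as to exploit the monotonicity of $x\mapsto e^x$. I write
\begin{equation*}
e^{\widehat U_1 + \phi_1} - e^{\widehat U_2 + \phi_2} \;=\; \bigl(e^{\widehat U_1 + \phi_1} - e^{\widehat U_2 + \phi_1}\bigr) + \bigl(e^{\widehat U_2 + \phi_1} - e^{\widehat U_2 + \phi_2}\bigr);
\end{equation*}
the first piece, multiplied by $\widehat U_1 - \widehat U_2$, has non-negative integrand and contributes with the "good" sign on the right-hand side, so it can simply be discarded. Using $|e^a - e^b| \leq e^{\max(a,b)}|a-b|$ on the remainder one obtains
\begin{equation*}
\int_{\RR^3} |\nabla(\widehat U_1 - \widehat U_2)|^2\,dx \;\leq\; e^{\|\widehat U_2\|_{L^\infty} + \max_i \|\phi_i\|_{L^\infty}} \int_{\RR^3} g\,|\phi_1 - \phi_2|\,|\widehat U_1 - \widehat U_2|\,dx.
\end{equation*}

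Finally, I estimate the right-hand integral by H\"older with exponents $(\tfrac{3}{2},6,6)$ combined with the Sobolev embedding $\dot W^{1,2}(\RR^3) \hookrightarrow L^6(\RR^3)$, which yields
\begin{equation*}
\int_{\RR^3} g\,|\phi_1 - \phi_2|\,|\widehat U_1 - \widehat U_2|\,dx \;\leq\; C_{\mathrm{Sob}}^2\,\|g\|_{L^{3/2}}\,\|\nabla(\phi_1-\phi_2)\|_{L^2}\,\|\nabla(\widehat U_1 - \widehat U_2)\|_{L^2}.
\end{equation*}
Dividing by $\|\nabla(\widehat U_1 - \widehat U_2)\|_{L^2}$, squaring, and absorbing the Sobolev constant and numerical factors into the exponential via a suitable choice of $C_0$ gives precisely the announced bound. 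The main obstacle is that the monotonicity trick delivers only a \emph{sign}, not any quantitative coercivity of the nonlinearity, so the entire estimate has to be closed through the cross-term alone; this is what forces $g$ to be measured in the critical Sobolev-dual space $L^{3/2}(\RR^3)$ in dimension three. A secondary technical point -- the justification of the integration by parts on the unbounded domain -- is handled cleanly by a cut-off argument, using the pointwise decay of $\widehat U_i$ and $\nabla \widehat U_i$ inherited from the Riesz representation together with $\widehat U_i \in L^6$ and $g e^{\widehat U_i + \phi_i} \in L^1 \cap L^\infty$.
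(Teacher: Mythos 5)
Your proof has the same skeleton as the paper's -- the energy identity from testing the difference of the two equations with $\widehat U_1 - \widehat U_2$, the same splitting of the exponential difference through the intermediate point $\widehat U_2 + \phi_1$, and the mean-value bound on the $\phi$-part -- but you close the estimate differently, and this changes the constant. The paper does \emph{not} discard the monotone term: it keeps the quantitative lower bound $(e^x-e^y)(x-y)\geq |x-y|^2 e^{\min\{x,y\}}$, which produces a negative term proportional to $\|\widehat U_1-\widehat U_2\|_{L^2(g)}^2$, absorbs the cross term into it by Young's inequality with a parameter, and is then left only with $\|\phi_1-\phi_2\|_{L^2(g)}^2$, which a single H\"older step ($L^{3/2}$ against $L^3$) plus Sobolev turns into $\|g\|_{L^{3/2}(\br^3)}\,\|\nabla\phi_1-\nabla\phi_2\|_{L^2(\br^3)}^2$ -- i.e.\ the first power of $\|g\|_{L^{3/2}(\br^3)}$, as stated. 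Your route -- drop the monotone term (your sign argument for this is correct), bound the cross term by H\"older with exponents $(\tfrac32,6,6)$, apply Sobolev to \emph{both} differences, and divide by $\|\nabla(\widehat U_1-\widehat U_2)\|_{L^2(\br^3)}$ -- is mathematically valid (modulo disposing of the trivial case where that norm vanishes), but after squaring it yields $\|g\|_{L^{3/2}(\br^3)}^2$ in front of the exponential rather than $\|g\|_{L^{3/2}(\br^3)}$; this extra factor depends on $g$ and cannot be absorbed into the uniform constant $C_0$, so you do not get ``precisely the announced bound'', only the lemma with a slightly worse $g$-dependence (harmless for the paper's application, where $g$ is fixed, but not the stated constant). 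Relatedly, your closing remark that monotonicity ``delivers only a sign, not any quantitative coercivity'' describes your choice, not a necessity: the weighted coercivity in $L^2(g)$ is exactly what the paper extracts from the mean-value lower bound, and using it is what buys the linear dependence on $\|g\|_{L^{3/2}(\br^3)}$.
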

\begin{proof}
Consider the difference $\widehat U_1 - \widehat U_2$, which satisfies the equation
\be \label{eq:diff-var}
\Delta (\widehat U_1 - \widehat U_2) = g \left (e^{\widehat U_1 + \phi_1} - e^{\widehat U_2 + \phi_2} \right ) .
\ee
Using $\widehat U_1 - \widehat U_2$ as a test function in the weak form of \eqref{eq:diff-var}, we find that
\begin{align}
\| \nabla( \widehat U_1 - \widehat U_2) \|_{L^2(\br^3)}^2 & = \int_{\br^3} g  \left (e^{\widehat U_2 + \phi_2} - e^{\widehat U_1 + \phi_1} \right ) (\widehat U_1 - \widehat U_2) \di x \\
& =  \int_{\br^3} g e^{\widehat U_2} \left ( e^{\phi_2} - e^{\phi_1} \right ) (\widehat U_1 - \widehat U_2) \di x + \int_{\br^3} g e^{\phi_1} \left ( e^{\widehat U_2} - e^{\widehat U_1} \right ) (\widehat U_1 - \widehat U_2) \di x .
\end{align}
It is valid to use $\widehat U_1 - \widehat U_2$ as a test function since $\widehat U_1 - \widehat U_2 \in \dot W^{1,2}(\br^3)$ and $g  \left (e^{\widehat U_2 + \phi_2} - e^{\widehat U_1 + \phi_1} \right ) \in L^1 \cap L^\infty(\br^3)$.

For all $x,y \in \br$, by the Mean Value Theorem there exists $\xi \in (x,y)$ such that
\be
e^x - e^y = (x-y)e^\xi .
\ee
We therefore have the two inequalities
\be \label{ineq:exp-below}
(e^x - e^y)(x-y) \geq |x-y|^2 e^{\min{\{ x,y\}}}
\ee
and
\be \label{ineq:exp-above}
|e^x - e^y| \leq |x-y| e^{\max{\{x,y\}}} .
\ee

Since $\widehat U_i \leq 0$, we have the estimate
\be
\| \nabla( \widehat U_1 - \widehat U_2) \|_{L^2(\br^3)}^2 \leq \frac{C_{\phi_1, \phi_2}}{C_{\widehat U_1, \widehat U_2} } \int_{\br^3} ge^{\widehat U_2} |\phi_1 - \phi_2| |\widehat U_1 - \widehat U_2 | \di x - \frac{C_{\widehat U_1, \widehat U_2}}{C_{\phi_1, \phi_2}} \int_{\br^3} g e^{\phi_1} |\widehat U_1 - \widehat U_2|^2 \di x ,
\ee
where
\be
C_{\phi_1, \phi_2} = \exp{\left ( \max_{i=1,2}{\|\phi_i\|_{L^\infty(\br^3)}} \right )}, \qquad C_{\widehat U_1, \widehat U_2} = \exp{\left (- \max_{i=1,2}{\|\widehat U_i \|_{L^\infty(\br^3)}} \right )} .
\ee
Using Young's inequality for products, with a small parameter, we obtain for any $\eta > 0$ 
\be
\| \nabla( \widehat U_1 - \widehat U_2) \|_{L^2(\br^3)}^2 \leq \frac{C_{\phi_1, \phi_2}}{4 \eta} \| \phi_1 - \phi_2 \|_{L^2(g)}^2 + \left ( \eta C_{\phi_1, \phi_2} - C_{\widehat U_1, \widehat U_2} \right )  \| \widehat U_1 - \widehat U_2 \|_{L^2(g)}^2 .
\ee
Taking $\eta$ such that $\eta C_{\phi_1, \phi_2} =C_{\widehat U_1, \widehat U_2},$ we conclude that
\be
\| \nabla( \widehat U_1 - \widehat U_2) \|_{L^2(\br^3)}^2 \leq \frac{1}{4}\frac{C^3_{\phi_1, \phi_2}}{C^2_{\widehat U_1, \widehat U_2}} \, \| \phi_1 - \phi_2 \|_{L^2(g)}^2 .
\ee
We may then apply H\"{o}lder and Sobolev inequalities to obtain
\be
\| \nabla( \widehat U_1 - \widehat U_2) \|_{L^2(\br^3)}^2 \leq C \, \| \nabla \phi_1 - \nabla \phi_2 \|_{L^2(\br^3)}^2 .
\ee
where
\be
C = \| g\|_{L^{\frac{3}{2}}(\br^3)} \exp{\left \{ C_0 \left [ 1 + \max_{i=1,2}{\| \phi_i \|_{L^\infty(\br^3)}} + \max_{i=1,2}{\| \widehat U_i \|_{L^\infty(\br^3)}} \right ] \right \}} .
\ee

\end{proof}

\subsubsection{Fixed Total Charge}
\begin{lem}[Stability for $\widehat U$: fixed total charge] \label{lem:fixed-stability}
Let $\phi_1, \phi_2 \in L^{3,\infty} \cap L^\infty \cap \dot W^{1,2}(\br^3)$, $\phi_1, \phi_2 \geq 0$ be given. Let $\widehat U_1, \widehat U_2 \in L^{3, \infty} \cap L^\infty \cap \dot W^{1,2}(\br^3)$ satisfy
\be \label{eq:Poisson-hats-fixed}
\Delta \widehat U_i = \frac{g e^{\widehat U_i + \phi_i}}{\int_{\br^3} g e^{\widehat U_i + \phi_i} \di x } .
\ee
Then
\be
\| \nabla \widehat U_1 -  \nabla \widehat U_2 \|^2_{L^2(\br^3)} \leq C \, \| \nabla \phi_1 - \nabla \phi_2 \|^2_{L^2(\br^3)},
\ee
where, for some uniform constant $C_0$,
\be
C = \| g\|_{L^{\frac{3}{2}}(\br^3)} \exp{\left \{ C_0 \left [ 1 + \max_{i=1,2}{\left \{ \| \phi_i \|_{L^\infty(\br^3)}, \| \widehat U_i \|_{L^\infty(\br^3)} \right\}} \right ] \right \}} .
\ee
\end{lem}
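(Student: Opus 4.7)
The plan is to reduce the fixed-charge stability estimate to an identity of the same form as in the variable-charge case by exploiting the normalisation $\int g e^{\widehat U_i+\phi_i}\di x = Z_i$. The key observation is that, if we set
\be
Z_i := \int_{\br^3} g e^{\widehat U_i + \phi_i}\di x,\qquad \widetilde U_i := \widehat U_i - \log Z_i,
\ee
then $\nabla\widetilde U_i = \nabla \widehat U_i$ and the fixed-charge Poisson equation \eqref{eq:Poisson-hats-fixed} becomes
\be
\Delta \widehat U_i = g e^{\widetilde U_i + \phi_i}, \qquad \int_{\br^3} g e^{\widetilde U_i + \phi_i}\di x = 1.
\ee
This unit-mass property of the right-hand side is what replaces the monotonicity that the original nonlinearity lacks, and it is the main obstacle the proof has to overcome.

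First I would test the equation $\Delta(\widehat U_1-\widehat U_2)=g(e^{\widetilde U_1+\phi_1}-e^{\widetilde U_2+\phi_2})$ against $\widehat U_1-\widehat U_2\in \dot W^{1,2}(\br^3)$. The right-hand side lies in $L^1\cap L^\infty(\br^3)$ by the a priori bounds on $\widehat U_i$ and $\phi_i$, so this pairing is justified. I would then decompose $\widehat U_1-\widehat U_2 = (\widetilde U_1-\widetilde U_2) + \log(Z_1/Z_2)$ inside the integral. The contribution of the constant $\log(Z_1/Z_2)$ is
\be
\log(Z_1/Z_2)\int_{\br^3} g\bigl(e^{\widetilde U_1+\phi_1}-e^{\widetilde U_2+\phi_2}\bigr)\di x = \log(Z_1/Z_2)\,(1-1) = 0,
\ee
which is exactly where the fixed-charge normalisation is used. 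Therefore
\be
\|\nabla(\widehat U_1-\widehat U_2)\|_{L^2(\br^3)}^2 = \int_{\br^3} g\bigl(e^{\widetilde U_2+\phi_2}-e^{\widetilde U_1+\phi_1}\bigr)(\widetilde U_1-\widetilde U_2)\di x,
\ee
which has the same structure as the identity driving the variable-charge proof.

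From this point I would essentially repeat the variable-charge argument on the shifted potentials $\widetilde U_i$. Namely, split the integrand as
\be
g e^{\widetilde U_2}(e^{\phi_2}-e^{\phi_1})(\widetilde U_1-\widetilde U_2) + g e^{\phi_1}(e^{\widetilde U_2}-e^{\widetilde U_1})(\widetilde U_1-\widetilde U_2),
\ee
bound the first term above using the mean value inequality \eqref{ineq:exp-above}, and bound the second term below by $-g e^{\phi_1}e^{\min\{\widetilde U_1,\widetilde U_2\}}|\widetilde U_1-\widetilde U_2|^2$ using \eqref{ineq:exp-below}. Since $\phi_i\geq 0$ gives $e^{\phi_1}\geq 1$, the coercive term in $\|\widetilde U_1-\widetilde U_2\|_{L^2(g)}^2$ has constant $e^{-\max_i\|\widetilde U_i\|_\infty}$. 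Young's inequality with a parameter tuned as in the variable-charge lemma then absorbs the $\widetilde U$-factor of the first term and yields
\be
\|\nabla(\widehat U_1-\widehat U_2)\|_{L^2(\br^3)}^2 \leq C \,\|\phi_1-\phi_2\|_{L^2(g)}^2,
\ee
with $C$ involving $\exp(C_0\max_i\|\phi_i\|_\infty)$ and $\exp(C_0\max_i\|\widetilde U_i\|_\infty)$. A H\"older--Sobolev step exactly as in the previous lemma replaces $\|\phi_1-\phi_2\|_{L^2(g)}^2$ by $\|g\|_{L^{3/2}}\|\nabla(\phi_1-\phi_2)\|_{L^2}^2$.

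The only remaining task is to replace $\|\widetilde U_i\|_\infty$ by $\|\widehat U_i\|_\infty$ in the final constant. This is done by controlling $|\log Z_i|$: an upper bound $Z_i\leq e^{\|\phi_i\|_\infty}\|g\|_{L^1}$ is immediate since $\widehat U_i\leq 0$, while the lower bound $Z_i\geq C_g>0$ comes from Lemma~\ref{lem:Bouchut-Marc} applied to $\widehat U_i\in L^{3,\infty}(\br^3)$ (recall \eqref{eq:g}). Hence $|\log Z_i|$ is bounded by an affine function of $\|\phi_i\|_\infty$ depending only on $g$, and absorbing this into $C_0$ produces exactly the constant stated in the lemma.
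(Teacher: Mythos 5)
Your proposal is correct and takes essentially the same route as the paper: shifting by $\log Z_i$ and using that both normalised right-hand sides integrate to one is exactly the paper's observation that the $\log(m_2/m_1)$ term drops out, and the subsequent mean-value/Young absorption, the H\"older--Sobolev step, and the control of $\log Z_i$ via $\widehat U_i\le 0$, $\phi_i\ge 0$ and Lemma~\ref{lem:Bouchut-Marc} all coincide with the paper's argument. The only cosmetic difference is that you run the coercivity on $\widetilde U_1-\widetilde U_2$ after splitting the exponential as in the variable-charge lemma, whereas the paper keeps the combination $\widehat U_i+\phi_i-\log m_i$ grouped; this does not affect the form of the final constant.
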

\begin{proof}
The difference $\widehat U_1 - \widehat U_2$ satisfies
\be \label{eq:diff-fixed}
\Delta( \widehat U_1 - \widehat U_2) = \frac{g e^{\widehat U_1 + \phi_1}}{\int_{\br^3} g e^{\widehat U_1 + \phi_1} \di x } - \frac{g e^{\widehat U_2 + \phi_2}}{\int_{\br^3} g e^{\widehat U_2 + \phi_2} \di x } .
\ee
We introduce the notation
\be
m_i : = \int_{\br^3} g e^{\widehat U_i + \phi_i} \di x .
\ee
We have the estimates
\be
\max_{i=1,2} \| \widehat U_i \|_{L^{3,\infty}(\br^3)} \leq C,
\ee
since the right hand sides of the equations \eqref{eq:Poisson-hats-fixed} each have total integral equal to one.
By Lemma~\ref{lem:Bouchut-Marc}, and since $\widehat U_i \leq 0$ and $\phi_i \geq 0$, we have uniform upper and lower bounds
\be \label{m-lower}
\| g \|_{L^1(\br^3)} e^{\| \phi_i \|_{L^\infty(\br^3)}} \geq m_i \geq e^{-C} .
\ee

From the weak form of equation \eqref{eq:diff-fixed}, for all $\chi \in C^\infty_c(\br^3)$,
\be \label{eq:diff-fixed-wk}
- \int_{\br^3} \nabla \chi \cdot \nabla (U_1 - U_2) \di x = \int_{\br^3} \chi \left [ \frac{g e^{\widehat U_1 + \phi_1}}{m_1 } - \frac{g e^{\widehat U_2 + \phi_2}}{m_2 }  \right ] \di x
\ee
From the assumptions on $\widehat U_i$, we deduce that the right hand side of \eqref{eq:diff-fixed} is uniformly bounded in $L^\infty$ and $L^1$. We can therefore extend the weak form \eqref{eq:diff-fixed-wk} to test functions $\chi \in \dot W^{1,2}(\br^3)$. We may therefore choose $\chi = \widehat U_1 - \widehat U_2$, which results in the identity
\begin{align}
\| \nabla (\widehat U_1 - \widehat U_2) \|^2_{L^2(\br^3)} & =  \int_{\br^3} (\widehat U_1 - \widehat U_2)\left [ \frac{g e^{\widehat U_2 + \phi_2}}{m_2 } - \frac{g e^{\widehat U_1 + \phi_1}}{m_1 }  \right ] \di x \\
& =  \int_{\br^3} g \left [ (\widehat U_1 + \phi_1 - \log{m_1}) - ( \widehat U_2 + \phi_2 - \log{m_2}) \right ] \left [ \frac{e^{\widehat U_2 + \phi_2}}{m_2 } - \frac{e^{\widehat U_1 + \phi_1}}{m_1 }  \right ] \di x   \\
&\quad - \int_{\br^3} g \left ( \phi_1 - \phi_2 \right ) \left [ \frac{e^{\widehat U_2 + \phi_2}}{m_2 } - \frac{e^{\widehat U_1 + \phi_1}}{m_1}  \right ] \di x - \log{\left ( \frac{m_2}{m_1} \right ) } \int_{\br^3} g  \left [ \frac{e^{\widehat U_2 + \phi_2}}{m_2 } - \frac{e^{\widehat U_1 + \phi_1}}{m_1 }  \right ] \di x .
\end{align}
The final term is equal to zero, by definition of $m_i$. Applying the inequalities \eqref{ineq:exp-below} and \eqref{ineq:exp-above} above results in the inequality
\begin{align}
\| \nabla (\widehat U_1 - \widehat U_2) \|^2_{L^2(\br^3)} &\leq - c_1 \|  (\widehat U_1 + \phi_1 - \log{m_1}) - ( \widehat U_2 + \phi_2 - \log{m_2}) \|^2_{L^2(g)} \\
&\quad + C_1 \int_{\br^3} g \left | \phi_1 - \phi_2 \right | \left | (\widehat U_1 + \phi_1 - \log{m_1}) - ( \widehat U_2 + \phi_2 - \log{m_2})\right | \di x,
\end{align}
where
\be
C_1 = \frac{e^{\max_{i=1,2}{\|\widehat U_i + \phi_i \|_{L^\infty(\br^3)}}}}{\min_{i=1,2}{ m_i}}, \qquad c_1 = \frac{e^{- \max_{i=1,2}{\| \widehat U_i +\phi_i \|_{L^\infty(\br^3)}}}}{\max_{i=1,2}{m_i}} .
\ee
Young's inequality for products, with a parameter, then implies the following estimate for any $\alpha > 0$:
\be
\| \nabla (\widehat U_1 - \widehat U_2) \|^2_{L^2(\br^3)} \leq \left ( \frac{C_1}{4 \alpha} - c_1 \right ) \|  (\widehat U_1 + \phi_1 - \log{m_1}) - ( \widehat U_2 + \phi_2 - \log{m_2}) \|^2_{L^2(g)} + C_1 \alpha \| \phi_1 - \phi_2 \|^2_{L^2(g)} .
\ee
Choosing $\alpha = \frac{C_1}{4 c_1}$ gives
\be
\| \nabla (\widehat U_1 - \widehat U_2) \|^2_{L^2(\br^3)} \leq \frac{C_1^2}{4 c_1}  \| \phi_1 - \phi_2 \|^2_{L^2(g)} .
\ee
Then, since $g \in L^1 \cap L^\infty$, we deduce that
\be
\| \nabla (\widehat U_1 - \widehat U_2) \|^2_{L^2(\br^3)} \leq C_g \| \phi_1 - \phi_2 \|^2_{L^6(\RR^3)} \leq C  \| \nabla \phi_1 - \nabla \phi_2 \|^2_{L^2(\RR^3)},
\ee
where, for some universal constant $C_0 > 0$,
\be
C = \| g \|_{L^{\frac{3}{2}}(\br^3)} \exp{\left \{ C_0 \left [ 1 + \max_{i=1,2}{\left\{\| \phi_i \|_{L^\infty(\br^3)}, \| \widehat U_i \|_{L^\infty(\br^3)} \right\}} \right ] \right \}} .
\ee

\end{proof}
}

\section{Uniqueness} \label{sec:uniqueness}

In this section we prove the uniqueness and stability in $W_2$ of solutions to $(VPME)_V$ and $(VPME)_F$ with bounded density.
Recall that, given two non-negative measures on $\br^d$ with the same mass, one defines
\be
W_2^2(\mu, \nu) : = \inf_{\pi \in \Pi(\mu,\nu)} \int_{\br^d\times \br^d} |x-y|^2 \pi(\di x \di y) ,
\ee
where $\pi \in \Pi(\mu,\nu)$ denotes the set of all probability measures in $\br^{2d}$ that have marginals $\mu$ and $\nu$.

Although the strategy of proof is very similar to the one used in our paper \cite{IGP-WP}, the fact of working in the whole space requires some modifications.
The proof will be identical for the two models \eqref{eq:vpme-var} and \eqref{eq:vpme-fixed}, so we state it as a single theorem.

\begin{thm}[Uniqueness for solutions with bounded density]
\label{thm:uniq}
Let ${f_0 \in L^1(\br^3 \times \br^3)}$ be a probability density with $\rho_{f_0} \in L^\infty(\br^3)$.
Fix a final time $T > 0$, and assume that $g\in L^1\cap L^\infty(\br^3)$, with $g\geq 0$ satisfying $\int_{\br^3}g=1$. Then there exists at most one solution ${f\in C([0,T] ; L^1(\br^3 \times \br^3))}$ of  \eqref{eq:vpme-var}  (resp. \eqref{eq:vpme-fixed}) with initial datum $f_0$ such that $\rho_f \in L^{\infty}([0,T] ; L^\infty(\br^3))$.

Moreover, the following quantitative stability estimate holds. Let $f_i$, $i=1,2$, be two solutions of  \eqref{eq:vpme-var}  (resp. \eqref{eq:vpme-fixed}) with $\rho_{f_i} \in L^{\infty}([0,T] ; L^\infty(\br^3))$.Then there exists a constant $C $, depending only on $g$ and on $\sup_{t\in [0,T]} \left(\|\rho_{f_i}(t)\|_{L^1(\br^3)}+\|\rho_{f_i}(t)\|_{L^\infty(\br^3)}\right)$ ($i=1,2$),
 such that for all $t \in [0,T]$ the following bound holds:
\begin{enumerate}
\item If $W_2(f_1(0), f_2 (0)) > 1/2$ then
$$
W_2(f_1 (t), f_2 (t)) \leq W_2(f_1(0), f_2 (0))e^{Ct}.
$$
\item If $W_2(f_1(0), f_2 (0)) \leq 1/2$, let $t_0>0$ be such that
$  \log[W_2(f_1(0), f_2 (0))]e^{- C t_0}=\log(1/2)$.
Then
\begin{equation} \label{str-str-stab-thm}
W_2(f_1(t), f_2 (t)) \leq \begin{cases}
\exp{\left [ \log[W_2(f_1 (0), f_2 (0))]\, e^{- C t}\right]}& \text{for }t \in [0,t_0] \\
\frac12 e^{C(t-t_0)} & \text{for }t \in [t_0,T] .
\end{cases}
\end{equation}
\end{enumerate}

\end{thm}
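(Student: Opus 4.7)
The plan is to adapt Loeper's $W_2$-stability strategy to the VPME setting, using the decomposition $E=\bar E+\widehat E$ together with the stability estimates of Propositions~\ref{prop:stable variable}--\ref{prop:stable fixed} in place of the pure Coulomb stability available for the classical Vlasov--Poisson system. Uniqueness is an immediate corollary of the quantitative stability estimate, obtained by specialising to $f_1(0)=f_2(0)$, so it suffices to prove the latter.

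\textbf{Setup and coupling.} Since $\rho_{f_i}\in L^\infty([0,T]\times\br^3)$, the singular part $\bar E_i$ is log-Lipschitz in space with a uniform constant depending only on $\sup_t\|\rho_{f_i}(t)\|_{L^1\cap L^\infty}$, while Propositions~\ref{prop:variable-hatU-reg}--\ref{prop:fixed-hatU-reg} yield uniform $C^{1,\alpha}$ bounds on $\widehat U_i$; in particular $\widehat E_i$ is Lipschitz. Hence $E_i$ is log-Lipschitz on $[0,T]\times\br^3$ and the characteristic flow $Z_i(t,x,v)=(X_i(t,x,v),V_i(t,x,v))$ is well-defined. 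Let $\pi_0$ be an optimal $W_2$-coupling between $f_1(0)$ and $f_2(0)$ and set
\be
Q(t):=\int_{\br^6\times\br^6}\bigl(|X_1(t,z_1)-X_2(t,z_2)|^2+|V_1(t,z_1)-V_2(t,z_2)|^2\bigr)\,d\pi_0(z_1,z_2).
\ee
Since $(Z_1(t),Z_2(t))_\#\pi_0$ is a coupling of $f_1(t)$ and $f_2(t)$, we have $W_2^2(f_1(t),f_2(t))\le Q(t)$ and $Q(0)=W_2^2(f_1(0),f_2(0))$.

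\textbf{Differential inequality.} Differentiating and using Young's inequality on the cross term,
\be
Q'(t)\le 2\,Q(t)+2\int (V_1-V_2)\cdot\bigl[E_1(X_1)-E_2(X_2)\bigr]\,d\pi_0.
\ee
Split $E_1(X_1)-E_2(X_2)=[E_1(X_1)-E_1(X_2)]+[E_1(X_2)-E_2(X_2)]$. The first increment is controlled by the log-Lipschitz modulus of $E_1$; an application of Jensen's inequality to the concave function $r\mapsto r(1+\log_+(1/r))$ produces a contribution of size $C\,Q(t)\bigl(1+\log_+(1/Q(t))\bigr)$. For the second increment, since the $z_2$-marginal of $\pi_0$ is $f_2(0)$ and $X_2(t)_\#f_2(0)=\rho_{f_2}(t)$,
\be
\int|E_1(X_2)-E_2(X_2)|^2\,d\pi_0=\int_{\br^3}|E_1(y)-E_2(y)|^2\,\rho_{f_2}(t,y)\,dy\le\|\rho_{f_2}(t)\|_{L^\infty}\|E_1-E_2\|_{L^2(\br^3)}^2.
\ee
Loeper's stability estimate for $\bar E$, combined with Propositions~\ref{prop:stable variable}--\ref{prop:stable fixed} for $\widehat E$, yields $\|E_1-E_2\|_{L^2(\br^3)}^2\le C\,W_2^2(\rho_1,\rho_2)\le C\,W_2^2(f_1(t),f_2(t))\le C\,Q(t)$, where $C$ depends only on the quantities allowed in the theorem.

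\textbf{Osgood integration and conclusion.} Putting the pieces together we arrive at an Osgood-type inequality $Q'(t)\le C\,Q(t)\bigl(1+\log_+(1/Q(t))\bigr)$ on $[0,T]$. Explicit integration in the regime $Q(t)\le 1/2$ gives $Q(t)\le\exp\!\bigl[(\log Q(0))\,e^{-Ct}\bigr]$, while in the complementary regime one obtains the linear bound $Q(t)\le Q(0)\,e^{Ct}$, matching the two cases in the statement; splicing them at the threshold $W_2=1/2$ produces the formula for $t_0$. Setting $f_1(0)=f_2(0)$ forces $Q\equiv 0$ by the Osgood criterion, giving uniqueness. The main obstacle in this plan is obtaining the $L^2$-stability of the full field $E_1-E_2$ in terms of $W_2(\rho_1,\rho_2)$ uniformly in the $L^1\cap L^\infty$ bounds on $\rho_{f_i}$ --- this is precisely the content of Propositions~\ref{prop:stable variable}--\ref{prop:stable fixed}, which have to wrestle with the nonlinear couplings $\widehat U_i$ on the whole space (notably the non-monotone normalising integral in the fixed-charge case); once that input is granted, the argument above is a whole-space adaptation of the torus analysis carried out in \cite{IGP-WP}.
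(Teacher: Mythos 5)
Your proposal follows essentially the same route as the paper: couple the two solutions along characteristics, split the field into $\bar E+\widehat E$, use the log-Lipschitz modulus plus Jensen for the increments evaluated at different characteristics, use the $L^\infty$ density bound together with the stability estimates of Propositions~\ref{prop:stable variable}--\ref{prop:stable fixed} (and Loeper's estimate for $\bar E$) for the $L^2$ difference terms, and close with an Osgood integration giving the two regimes of the statement.

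One justification needs repair. You claim that the uniform $C^{1,\alpha}$ bounds on $\widehat U_i$ from Propositions~\ref{prop:variable-hatU-reg}--\ref{prop:fixed-hatU-reg} make $\widehat E_i$ Lipschitz; they do not --- $\widehat U_i\in C^{1,\alpha}$ only gives $\widehat E_i\in C^{0,\alpha}$, and a H\"older modulus alone would turn your first increment into a term of size $Q^{(1+\alpha)/2}$, which is not Osgood and would destroy uniqueness. The correct (and easy) fix, which is what the paper does in Lemma~\ref{lem:I3}, is to note that $\Delta\widehat U_{1}=g e^{\bar U_1+\widehat U_1}$ (or its normalised version) is bounded in $L^1\cap L^\infty(\br^3)$ uniformly in time, so the log-Lipschitz elliptic estimate of Lemma~\ref{lem:elliptic} applies to $\widehat U_1$ exactly as it does to $\bar U_1$; with that substitution your "hence $E_i$ is log-Lipschitz" step is justified and the rest of your argument goes through as written.
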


\begin{proof}
Let $f_1,f_2 \in C([0,T] ; L^1(\br^3 \times \br^3))$ be two solutions of \eqref{eq:vpme-var}  (resp. \eqref{eq:vpme-fixed}) such that
$\rho_{f_1},\rho_{f_2}\in L^{\infty}([0,T] ; L^\infty(\br^3))$.

We will prove the result by means of a Gronwall type estimate. To do this, we note that as in \cite{IGP-WP}, thanks to our assumptions on the density, the electric field is log-Lipschitz
and therefore our solutions are transported by their respective characteristics, that we denote by $(X^{(1)},V^{(1)})$ and $(X^{(2)},V^{(2)})$.

Fix an arbitrary initial coupling $\pi_0 \in \Pi \left (f_1 (0), f_2 (0) \right )$ and consider the quantity
\be \label{def:D}
D(t) : = \int |X^{(1)}_t - X^{(2)}_t|^2 +  |V^{(1)}_t - V^{(2)}_t|^2 \di \pi_0 .
\ee
As in \cite{IGP-WP}, it follows from the definition of Wasserstein distance that
\be \label{D-ctrl-W}
W_2^2\(\rho_{f_1} (t), \rho_{f_2} (t)\) \leq W_2^2\(f_1 (t), f_2 (t)\) \leq D(t) .
\ee
Moreover, since $\pi_0$ was arbitrary, we have
\be \label{W0-ctrl-D0}
W_2^2\(f_1 (0), f_2 (0)\) = \inf_{\pi_0} D(0) .
\ee
Hence, it suffices to control $D(t)$. This amounts to performing a Gronwall estimate along the trajectories of the characteristic flow.

Differentiating with respect to $t$ gives
\be \label{D-derivative}
\dot D(t) = 2 \int_{(\br^3 \times \br^3)^2} (X^{(1)}_t - X^{(2)}_t) \cdot (V^{(1)}_t - V^{(2)}_t ) + (V^{(1)}_t - V^{(2)}_t ) \cdot \[ E_{1,t}(X^{(1)}_t) - E_{2,t}(X^{(2)}_t) \] \di \pi_0 
\ee
We split the electric field into four parts:
\begin{align}
E_{1,t}(X^{(1)}_t) - E_{2,t}(X^{(2)}_t) & = \[ \bar E_{1,t}(X^{(1)}_t) - \bar E_{1,t}(X^{(2)}_t) \] + \[ \bar E_{1,t}(X^{(2)}_t) - \bar E_{2,t}(X^{(2)}_t) \]  \\
& \qquad + \[ \widehat E_{1,t}(X^{(1)}_t) - \widehat E_{1,t}(X^{(2)}_t) \] + \[ \widehat E_{1,t}(X^{(2)}_t) - \widehat E_{2,t}(X^{(2)}_t)\] .
\end{align}
Applying H\"older's inequality to \eqref{D-derivative}, we obtain
\be
\dot D \leq D + 2 \sqrt{D} \sum_{i=1}^4 I_i^{1/2} ,
\ee
where
\begin{align} \label{def:Ii}
\begin{split}
&I_1(t) := \int_{(\br^3 \times \br^3)^2} |\bar E_{1,t}(X^{(1)}_t) - \bar E_{1,t}(X^{(2)}_t)|^2 \di \pi_0, \quad I_2(t) := \int_{(\br^3 \times \br^3)^2} |\bar E_{1,t}(X^{(2)}_t) - \bar E_{2,t}(X^{(2)}_t)|^2 \di \pi_0; \\
&I_3(t):= \int_{(\br^3 \times \br^3)^2} |\widehat E_{1,t}(X^{(1)}_t) - \widehat E_{1,t}(X^{(2)}_t)|^2 \di \pi_0, \quad I_4(t):= \int_{(\br^3 \times \br^3)^2} |\widehat E_{1,t}(X^{(2)}_t) - \widehat E_{2,t}(X^{(2)}_t)|^2 \di \pi_0.
\end{split}
\end{align}
We estimate the above terms in Lemmas~\ref{lem:I1}-\ref{lem:I4} below. Altogether we obtain
\be
\dot D \leq \begin{cases} C  D \lvert \log(D) \rvert & \text{ if } D < 1/2\\
C   D & \text{ if } D \geq 1/2 .
\end{cases}
\ee
Therefore
\be
D(t) \leq \exp{\left[ \log( D(0)) e^{- C  t} \right]} 
\ee
as long as $D(t) \leq 1/2$, while once $D(t)$ reaches $1/2$ (say at some time $\bar t\geq 0$) then we have the alternative bound
$$
D(t) \leq \frac12 e^{C (t-\bar t)} .
$$
From these bounds, the stability follows. 
\end{proof}

In the remainder of this section, we prove Lemmas~\ref{lem:I1}-\ref{lem:I4}.
We shall need the regularity estimates on $\bar E$ provided by the boundedness of the density.
It will be convenient to state them in a rather unusual but compact form, for later use in Lemmas \ref{lem:I1} and \ref{lem:I3}.

\begin{lem}\label{lem:elliptic}
Let $\bar U:=G\ast \rho$, where $G=-\frac{1}{4\pi|x|}$ is the Green function,
and assume that $\|\rho\|_{L^1(\br^3)}+\|\rho\|_{L^\infty(\br^3)}\leq M$ for some $M \geq 1$.
Let $H:\br^+\to \br^+$ denote the function defined as
$$
H(s) := \begin{cases}
s \left (\log s \right )^2 &\text{ if } s \leq e^{-2} \\
4e^{-2}& \text{ if } s > e^{-2}.
\end{cases}
$$
Then there exists a universal constant $C$ such that
$$
|\nabla \bar U(x)-\nabla \bar U(y)|^2\leq C\,M^2\, H(|x-y|^2)\qquad \text{for all }x,y \in \br^3.
$$
\end{lem}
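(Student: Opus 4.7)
The plan is to exploit the explicit representation
\[
\nabla \bar U(x) = \int_{\RR^3} \nabla G(x-z)\,\rho(z)\,dz, \qquad \nabla G(x) = -\frac{x}{4\pi|x|^3},
\]
and bound the increment by splitting the integrand into a near field and a far field around the points $x,y$, balancing the $L^\infty$ bound against the $L^1$ bound on $\rho$. This is the classical log--Lipschitz estimate for the electric field in Vlasov--Poisson, e.g. as in Loeper, adapted to record the dependence on $M$.

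First I would dispose of the ``large $|x-y|$'' case. A standard interpolation, splitting the convolution kernel $\nabla G$ at an optimal radius $R=(\|\rho\|_{L^1}/\|\rho\|_{L^\infty})^{1/3}$, yields the $L^\infty$ bound
\[
\|\nabla \bar U\|_{L^\infty(\br^3)} \le C\,\|\rho\|_{L^\infty}^{2/3}\|\rho\|_{L^1}^{1/3} \le CM,
\]
so that $|\nabla \bar U(x)-\nabla \bar U(y)|^2\le CM^2$; since $H(s)\ge 4e^{-2}$ for $s\ge e^{-2}$, this takes care of the case $|x-y|^2\ge e^{-2}$ up to adjusting the constant.

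For the main case $r:=|x-y|<e^{-1}$, I would write
\[
\nabla \bar U(x)-\nabla \bar U(y) = \int_{\br^3} \bigl[\nabla G(x-z)-\nabla G(y-z)\bigr]\rho(z)\,dz
\]
and split the integration in three pieces according to the distance from $x$ (or equivalently $y$):
\begin{enumerate}
\item[(a)] the near region $\{|x-z|\le 2r\}\cup\{|y-z|\le 2r\}$, on which I bound each term $|\nabla G|$ separately by $C/|x-z|^2$ and $C/|y-z|^2$, and use $\|\rho\|_{L^\infty}$ to get a contribution $\le CM r$;
\item[(b)] the intermediate region $\{2r\le |x-z|\le 1\}$, on which the mean value theorem gives $|\nabla G(x-z)-\nabla G(y-z)|\le C r/|x-z|^3$; integrating against $\|\rho\|_{L^\infty}$ gives $\le CMr\,|\log r|$;
\item[(c)] the far region $\{|x-z|\ge 1\}$, on which the same MVT bound integrated against $\|\rho\|_{L^1}$ gives $\le CMr$.
\end{enumerate}
Summing these three contributions produces $|\nabla \bar U(x)-\nabla \bar U(y)|\le CMr(1+|\log r|)$, and for $r^2<e^{-2}$ this is dominated by $CMr\,|\log r|$. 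Squaring and recalling $\log r^2 = 2\log r$ yields exactly $|\nabla \bar U(x)-\nabla \bar U(y)|^2\le CM^2\,r^2(\log r^2)^2 = CM^2 H(r^2)$, as required.

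The only delicate point is the intermediate region, since $r$ is small and the logarithmic divergence has to come out with the correct exponent; the cutoff at radius $1$ is chosen precisely so that the $L^\infty$ bound on $\rho$ and the integrability at infinity via the $L^1$ bound are both usable. Once the three pieces are handled and the two regimes (small and large $|x-y|$) are combined, the definition of $H$ accommodates the trivial constant bound in the non-small regime, completing the proof.
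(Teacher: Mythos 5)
Your proof is correct, but it takes a different route from the paper. The paper's argument is a two-line reduction: it normalises $\bar U_M:=\frac1M\bar U$, $\rho_M:=\frac1M\rho$, so that $\|\rho_M\|_{L^1}+\|\rho_M\|_{L^\infty}\leq 1$, invokes Loeper's Lemma 3.1 as a black box to get $\|\nabla\bar U_M\|_{L^\infty}\leq C$ and $|\nabla\bar U_M(x)-\nabla\bar U_M(y)|\leq C|x-y|\bigl|\log|x-y|\bigr|$ for $|x-y|\leq e^{-1}$, and then recovers the factor $M^2$ by linearity of the convolution. You instead reprove the underlying log-Lipschitz estimate from scratch via the explicit kernel $\nabla G$, splitting into a near region (bounded by $\|\rho\|_{L^\infty}$, contribution $CMr$), an intermediate annulus $2r\leq|x-z|\leq 1$ (mean value theorem plus $\|\rho\|_{L^\infty}$, contribution $CMr|\log r|$), and a far region (mean value theorem plus $\|\rho\|_{L^1}$, contribution $CMr$), and you dispose of the regime $|x-y|\geq e^{-1}$ with the interpolated $L^\infty$ bound $\|\nabla\bar U\|_{L^\infty}\leq C\|\rho\|_{L^\infty}^{2/3}\|\rho\|_{L^1}^{1/3}\leq CM$, which matches the constant branch of $H$. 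All the estimates check out (in the near region one should note that $\{|y-z|\leq 2r\}\subset\{|x-z|\leq 3r\}$ so both kernel terms are integrable there, and the intermediate-region MVT bound uses $|y-z|\geq|x-z|/2$, both of which are implicit in your sketch and harmless). What your approach buys is a self-contained proof with the $M$-dependence tracked explicitly rather than deduced by rescaling; what the paper's approach buys is brevity, at the cost of relying on the cited lemma, whose proof is essentially the kernel-splitting argument you wrote out.
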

\begin{proof}
Let $\bar U_M:=\frac1{M}\bar U$ and $\rho_M:=\frac1M \rho$, and note that 
$\bar U_M:=G\ast \rho_M$ with $\|\rho_M\|_{L^1(\br^3)}+\|\rho_M\|_{L^\infty(\br^3)}\leq 1$.

Hence, applying  \cite[Lemma 3.1]{Loeper} to the function $\bar U_M$ we deduce that 
\be
\| \nabla \bar U_M\|_{L^\infty(\br^3)}\leq C, \quad |\nabla \bar U_M(x)-\nabla \bar U_M(y)|\leq C|x-y|\bigl|\log|x-y|\bigr|\quad \text{for all }x,y \in \br^3\text{ with }|x-y| \leq e^{-1}.
\ee
This estimate implies that
$$
|\nabla \bar U_M(x)-\nabla \bar U_M(y)|^2\leq C\,H(|x-y|^2)\qquad \text{for all }x,y \in \br^3,
$$
and recalling that $\bar U_M=\frac1{M}\bar U$, this concludes the proof.
\end{proof}

In all the following lemmas, $D(t)$ is defined as in \eqref{def:D}.
\begin{lem}[Control of $I_1$] \label{lem:I1}
Let $I_1$ be defined as in \eqref{def:Ii}. Then
\be
I_1(t) \leq C H(D(t)),
\ee
where $H$ is defined in Lemma \ref{lem:elliptic}.
\end{lem}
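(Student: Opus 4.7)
The plan is to reduce the bound on $I_1$ to a pointwise estimate coming from Lemma \ref{lem:elliptic}, followed by Jensen's inequality applied to the concave function $H$.

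First I would invoke Lemma \ref{lem:elliptic} applied to $\bar U_1 = G \ast \rho_{f_1}$. Since $\rho_{f_1}$ is bounded in $L^1 \cap L^\infty$ uniformly in $t \in [0,T]$ (with the bound playing the role of $M$), this gives the pointwise estimate
\be
|\bar E_{1,t}(x) - \bar E_{1,t}(y)|^2 \leq C M^2\, H(|x-y|^2) \qquad \text{for all } x,y \in \br^3.
\ee
Applying this with $x = X^{(1)}_t$ and $y = X^{(2)}_t$ and integrating against $\pi_0$ yields
\be
I_1(t) \leq C M^2 \int_{(\br^3 \times \br^3)^2} H\bigl(|X^{(1)}_t - X^{(2)}_t|^2\bigr) \di \pi_0 .
\ee

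Next I would verify that $H$ is concave and non-decreasing on $[0, +\infty)$. On the interval $(0, e^{-2}]$, a direct computation gives $H''(s) = \frac{2}{s}(\log s + 1) \leq 0$, so $H$ is concave there; on $(e^{-2}, +\infty)$ it is constant, and the left- and right-derivatives match at $s = e^{-2}$ (both equal to zero), so $H$ is concave on all of $[0,+\infty)$. Similarly $H'(s) = \log s (\log s + 2) \geq 0$ on $(0, e^{-2}]$, so $H$ is non-decreasing. Since $\pi_0$ is a probability measure on $(\br^3 \times \br^3)^2$, Jensen's inequality gives
\be
\int H\bigl(|X^{(1)}_t - X^{(2)}_t|^2\bigr) \di \pi_0 \leq H\!\left( \int |X^{(1)}_t - X^{(2)}_t|^2 \di \pi_0 \right) \leq H(D(t)),
\ee
where the last inequality uses that $\int |X^{(1)}_t - X^{(2)}_t|^2 \di \pi_0 \leq D(t)$ together with the monotonicity of $H$. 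Combining this with the pointwise bound gives $I_1(t) \leq C H(D(t))$, where $C$ now absorbs the factor $M^2$ which depends only on $\sup_{t \in [0,T]}\bigl(\|\rho_{f_1}(t)\|_{L^1} + \|\rho_{f_1}(t)\|_{L^\infty}\bigr)$, as in the statement of Theorem \ref{thm:uniq}.

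The computation is essentially routine given Lemma \ref{lem:elliptic}. The only point that requires some care is the verification of the concavity of $H$ across the gluing point $s = e^{-2}$, which is needed to apply Jensen's inequality globally rather than just on the set where $|X^{(1)}_t - X^{(2)}_t|^2$ is small; once this is established, the rest of the argument is straightforward.
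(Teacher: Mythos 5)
Your proof is correct and follows essentially the same route as the paper: apply the log-Lipschitz bound of Lemma \ref{lem:elliptic} to $\bar E_{1,t}$ (using the uniform $L^1\cap L^\infty$ bound on $\rho_{f_1}$), integrate against $\pi_0$, and then use concavity and monotonicity of $H$ together with Jensen's inequality to pass to $H(D(t))$. Your explicit verification of the concavity of $H$ across the gluing point $s=e^{-2}$ is a welcome detail that the paper leaves to the reader.
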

\begin{proof}
Since the density associated to $\rho_{f_1}$ is uniformly bounded, we can apply Lemma \ref{lem:elliptic} to bound
\begin{align}
I_1(t) &\leq   C \int_{(\br^3 \times \br^3)^2} H\left(|X^{(1)}_t - X^{(2)}_t|^2\right) \di \pi_0 .
\end{align}
Also, one can check that the function $H$ is concave on $\mathbb R^+$.
Thus, since $\pi_0$ is a probability measure, we may apply Jensen's inequality to deduce that
\be
I_1(t)  \leq  C  \, H\left( \int_{(\br^3 \times \br^3)^2} |X^{(1)}_t - X^{(2)}_t|^2 \di \pi_0 \right) \leq C\, H(D(t)),
\ee
where the last inequality follows from the fact that $H$ is non-decreasing.
\end{proof}

\begin{lem}[Control of $I_2$] \label{lem:I2}
Let $I_2$ be defined as in \eqref{def:Ii}. Then
\be
I_2(t) \leq  C\, D(t) .
\ee
\end{lem}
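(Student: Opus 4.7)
The plan is to rewrite the integral defining $I_2$ as an integral over $\br^3$ against the density $\rho_{f_2}(t)$, and then apply the Loeper-type $L^2$ stability estimate for the singular part $\bar E$.

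First I would observe that, since $f_2$ is transported by its characteristic flow $(X^{(2)}_t, V^{(2)}_t)$ and $\pi_0$ has $f_2(0)$ as its second marginal, the push-forward of $\pi_0$ under the map $(x,v,x',v') \mapsto X^{(2)}_t$ is exactly $\rho_{f_2}(t)$. The integrand of $I_2$ depends only on $X^{(2)}_t$, so a change of variables gives
\be
I_2(t) = \int_{\br^3} |\bar E_{1,t}(x) - \bar E_{2,t}(x)|^2 \, \rho_{f_2}(t,x) \di x .
\ee

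Next I would bound this by $\| \rho_{f_2}(t) \|_{L^\infty(\br^3)} \, \| \bar E_{1,t} - \bar E_{2,t} \|_{L^2(\br^3)}^2$, and apply the stability estimate for the singular part of the potential (the Loeper lemma quoted just before Lemma~\ref{lem:I1}) to obtain
\be
\| \bar E_{1,t} - \bar E_{2,t} \|_{L^2(\br^3)}^2 \leq \max_{i=1,2} \| \rho_{f_i}(t) \|_{L^\infty(\br^3)} \, W_2^2(\rho_{f_1}(t), \rho_{f_2}(t)) .
\ee
Finally, invoking \eqref{D-ctrl-W} gives $W_2^2(\rho_{f_1}(t), \rho_{f_2}(t)) \leq D(t)$, and combining these estimates yields $I_2(t) \leq C D(t)$ with $C$ depending only on $\sup_{t \in [0,T]} \max_i \| \rho_{f_i}(t) \|_{L^\infty(\br^3)}^2$, as desired.

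There is essentially no obstacle here: the whole argument is a direct composition of (i) the transport identity for characteristics (which is available because the log-Lipschitz regularity of the electric field was established earlier from $\rho_{f_i} \in L^\infty$), (ii) Loeper's $W_2$ stability for the Newtonian potential, and (iii) the elementary comparison $W_2(\rho_1,\rho_2) \leq W_2(f_1,f_2)$. The only minor point worth checking is that the change of variables applies even though $f_2(0)$ need not have finite second moment; this is fine because we only need that $X^{(2)}_t$ pushes the marginal of $\pi_0$ on the second factor $(x',v')$ forward to $\rho_{f_2}(t)$, which holds for any transport by a locally well-defined flow.
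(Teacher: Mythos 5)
Your proposal is correct and follows essentially the same route as the paper: the push-forward identity along the characteristics of $f_2$ rewrites $I_2$ as $\int_{\br^3} |\bar E_{1,t}-\bar E_{2,t}|^2 \rho_{f_2}(t)\,\di x$, which is bounded by $\|\rho_{f_2}(t)\|_{L^\infty(\br^3)}\,\|\bar E_{1,t}-\bar E_{2,t}\|_{L^2(\br^3)}^2$, and then Loeper's stability estimate for the singular part together with \eqref{D-ctrl-W} gives $I_2(t)\leq C\,D(t)$. This is exactly the paper's argument (which invokes Propositions~\ref{prop:stable variable}--\ref{prop:stable fixed}, whose $\bar U$ part is the Loeper lemma you cite), so there is nothing to add.
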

\begin{proof}
One can note that, for any test function $\phi$,
\be
\int_{(\br^3 \times \br^3)^2} \phi(X^{(i)}_t) \di \pi_0 = \int_{\br^3 \times \br^3} \phi(x) f_i (t, x,v) \di x \di v \label{pushforward2}= \int_{\br^3} \phi(x) \rho_{f_i} (t, x) \di x .
\ee
Thus
\begin{align}
I_2(t) & = \int_{\br^3} |\bar E_{1,t}(x) - \bar E_{2,t}(x)|^2 \rho_{f_2} (t,x) \di x \leq \lVert \rho_{f_2}(t)  \rVert_{L^{\infty}(\br^3)} \lVert \bar E_{1,t} - \bar E_{2,t}\rVert_{L^2(\br^3)}^2,
\end{align}
and we conclude using Propositions~\ref{prop:stable variable}-\ref{prop:stable fixed} (depending on the model under consideration) and \eqref{D-ctrl-W}.
\end{proof}

{

\begin{lem}
[Control of $I_3$] \label{lem:I3}
Let $I_3$ be defined as in \eqref{def:Ii}. Then
\be
I_3(t) \leq C H(D(t)),
\ee
where the constant $C > 0$ depends only on $\mc{E}[f_1(0)]$.
\end{lem}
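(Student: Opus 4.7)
The plan is to mirror the proof of Lemma~\ref{lem:I1}: establish a log-Lipschitz bound for $\widehat E_1 = -\nabla \widehat U_1$, then push this bound through Jensen's inequality against the coupling $\pi_0$. The key point is that $\widehat U_1$ solves a Poisson equation whose source is uniformly bounded in $L^1 \cap L^\infty$, so Lemma~\ref{lem:elliptic} is directly applicable.

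First I would verify the uniform source bound. Because $f_1$ is a solution of \eqref{eq:vpme-var} (resp.\ \eqref{eq:vpme-fixed}) with conserved energy, Lemma~\ref{energy-moment} yields $\|\rho_{f_1}(t)\|_{L^{5/3}(\br^3)} \leq C(\mc{E}[f_1(0)])$ for all $t \in [0,T]$. Lemma~\ref{lem:Ubar} then gives a time-uniform bound on $\|\bar U_1(t)\|_{L^\infty(\br^3)}$, depending only on $\mc{E}[f_1(0)]$. Combining this with $\widehat U_1 \leq 0$ (from the Green's function representation, see \eqref{eq:hatU-conv-G}) and, in the fixed-charge case, with the lower bound \eqref{eq:g} for $\int g e^{\bar U_1 + \widehat U_1} \di x$ provided by Lemma~\ref{lem:Bouchut-Marc}, I deduce that the source $h_1$ of the equation for $\widehat U_1$ (namely $g e^{\bar U_1 + \widehat U_1}$ or its normalisation) satisfies
$$
\| h_1(t) \|_{L^1(\br^3)} + \| h_1(t) \|_{L^\infty(\br^3)} \leq M,
$$
with $M$ depending only on $\mc{E}[f_1(0)]$ (and on $g$, $\|g\|_{L^\infty}$).

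Second, by Liouville's theorem $\widehat U_1 = -G \ast h_1$, so applying Lemma~\ref{lem:elliptic} (the sign of $G$ is irrelevant for the modulus of continuity of the gradient) yields the pointwise estimate
$$
|\widehat E_1(t,x) - \widehat E_1(t,y)|^2 = |\nabla \widehat U_1(t,x) - \nabla \widehat U_1(t,y)|^2 \leq C M^2 \, H\!\left(|x-y|^2\right),
$$
valid for all $x, y \in \br^3$ and all $t \in [0,T]$, with $H$ the concave non-decreasing function from Lemma~\ref{lem:elliptic}.

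Third, substituting into the definition of $I_3$ in \eqref{def:Ii},
$$
I_3(t) \leq C M^2 \int_{(\br^3 \times \br^3)^2} H\!\left(|X^{(1)}_t - X^{(2)}_t|^2\right) \di \pi_0.
$$
Since $H$ is concave on $\br^+$ and $\pi_0$ is a probability measure, Jensen's inequality combined with the monotonicity of $H$ gives
$$
I_3(t) \leq C M^2 \, H\!\left(\int_{(\br^3 \times \br^3)^2} |X^{(1)}_t - X^{(2)}_t|^2 \di \pi_0 \right) \leq C \, H(D(t)),
$$
which is the claimed bound. The only real subtlety is tracking the constants through Step~1 so that the final $C$ genuinely depends only on $\mc{E}[f_1(0)]$; this is immediate because energy conservation propagates the $L^{5/3}$ bound on $\rho_{f_1}$ and every subsequent estimate is deduced from it together with the fixed data $g$. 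All other steps are direct transplants from the proof of Lemma~\ref{lem:I1}.
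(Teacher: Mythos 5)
Your proposal is correct and follows essentially the same route as the paper: bound the source of the equation for $\widehat U_1$ uniformly in $L^1\cap L^\infty$ (via energy conservation, the $L^\infty$ bound on $\bar U_1$ from Lemma~\ref{lem:Ubar}, the sign $\widehat U_1\leq 0$, and Lemma~\ref{lem:Bouchut-Marc} in the fixed-charge case), then apply Lemma~\ref{lem:elliptic} to get the log-Lipschitz estimate on $\nabla\widehat U_1$ and conclude by Jensen's inequality exactly as in Lemma~\ref{lem:I1}. No gaps; the constant-tracking you describe matches the paper's argument.
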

\begin{proof}
Note that
\be
\label{eq:hat U C2a}
\Delta \widehat U_{1,t} =g e^{\widehat U_{1,t} + \bar U_{1,t}}
\qquad \biggl(\text{resp. }\Delta \widehat U_{1,t} = \frac{g e^{\widehat U_{1,t} + \bar U_{1,t}}}{\int_{\br^3} g e^{\widehat U_{1,t} + \bar U_{1,t}} \di x }\biggr).
\ee
We can thus deduce a log-Lipschitz estimate on $\widehat U$ by using Lemma~\ref{lem:elliptic}. To do this we therefore need $L^1$ and $L^\infty$ estimates on $\Delta \widehat U$.

By \eqref{est:hatU-var-linfty} and \eqref{est:hatU-fixed-linfty} 
\be 
\| \Delta \widehat U_{1,t} \|_{L^\infty(\RR^3)} \leq C_g e^{\| \bar U_{1,t} \|_{L^\infty(\br^3)}} .
\ee
Then, using the $L^\infty$ estimate on $\bar U$ from Lemma~\ref{lem:Ubar},
\be
\| \Delta \widehat U_{1,t} \|_{L^\infty(\RR^3)} \leq C_g \exp \left [C \|\rho_{f_1} (t)\|^{\frac{5}{6}}_{L^{\frac{5}{3}}(\br^3)} \| \rho_{f_1}(t) \|^{\frac{1}{6}}_{L^1(\br^3)} \right ] \leq C,
\ee
where $C$ depends only on the initial datum $f_1(0)$. 

For the $L^1$ estimates, in the fixed charge case we always have
\be
\| \Delta \widehat U_{1,t} \|_{L^\infty(\RR^3)} = 1 .
\ee
In the variable charge case, by \eqref{est:hatU-var-linfty} we have
\be
\| \Delta \widehat U_{1,t} \|_{L^1(\RR^3)} \leq  \| g \|_{L^1(\RR^3)} e^{\| \bar U_{1,t} \|_{L^\infty(\br^3)}} \leq \| g \|_{L^1(\RR^3)} \exp \left [C \|\rho_{f_1}(t) \|^{\frac{5}{6}}_{L^{\frac{5}{3}}(\br^3)} \| \rho_{f_1}(t) \|^{\frac{1}{6}}_{L^1(\br^3)} \right ] \leq C ,
\ee
where $C$ depends only on the initial datum $f_1(0)$.

Therefore, by Lemma~\ref{lem:elliptic},
\be
|\nabla \widehat U_{1,t}(x)-\nabla \widehat U_{1,t}(y)|^2\leq C H(|x-y|^2)\qquad \text{for all }x,y \in \br^3,
\ee
for some $C$ depending only on $f_1(0)$.

We then argue as in Lemma~\ref{lem:I1}: using the above regularity estimate on $\nabla \widehat U_{1,t}$, we have
\be
I_3(t) \leq   C \int_{(\br^3 \times \br^3)^2} H\left(|X^{(1)}_t - X^{(2)}_t|^2\right) \di \pi_0 .
\ee
Since $H$ is concave and non-decreasing,
\be
I_3(t) \leq C  \, H\left( \int_{(\br^3 \times \br^3)^2} |X^{(1)}_t - X^{(2)}_t|^2 \di \pi_0 \right) \leq C\, H(D(t)) .
\ee
which concludes the proof.

\end{proof}
}

\begin{lem}[Control of $I_4$] \label{lem:I4}
Let $I_4$ be defined as in \eqref{def:Ii}. Then
\be
I_4(t)  \leq C\, D(t),
\ee
where $D$ is defined as in \eqref{def:D} and $C_{M,d}$ depends on $M$ and $d$.
\end{lem}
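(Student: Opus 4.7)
The plan is to handle $I_4$ by exactly the same pushforward-and-stability scheme used in Lemma~\ref{lem:I2}, but now applied to the smooth part of the field $\widehat E$ rather than to $\bar E$. The crucial replacement is to invoke Propositions~\ref{prop:stable variable} or \ref{prop:stable fixed} in place of the Loeper-type estimate for $\bar U$.

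First I would use the fact that $f_2(t)$ is transported by the characteristic flow $(X^{(2)}_t, V^{(2)}_t)$, so that for any measurable $\phi$,
\be
\int_{(\br^3 \times \br^3)^2} \phi(X^{(2)}_t) \di \pi_0 = \int_{\br^3 \times \br^3} \phi(x) f_2(t,x,v) \di x \di v = \int_{\br^3} \phi(x) \rho_{f_2}(t,x) \di x.
\ee
Applied to $\phi(x) = |\widehat E_{1,t}(x) - \widehat E_{2,t}(x)|^2$, this gives
\be
I_4(t) = \int_{\br^3} |\widehat E_{1,t}(x) - \widehat E_{2,t}(x)|^2 \rho_{f_2}(t,x) \di x \leq \| \rho_{f_2}(t) \|_{L^\infty(\br^3)} \| \widehat E_{1,t} - \widehat E_{2,t} \|_{L^2(\br^3)}^2.
\ee

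Next, I would apply Proposition~\ref{prop:stable variable} in the case of \eqref{eq:vpme-var}, or Proposition~\ref{prop:stable fixed} in the case of \eqref{eq:vpme-fixed}, to estimate
\be
\| \widehat E_{1,t} - \widehat E_{2,t} \|_{L^2(\br^3)}^2 = \| \nabla \widehat U_{1,t} - \nabla \widehat U_{2,t} \|_{L^2(\br^3)}^2 \leq C \max_{i=1,2} \| \rho_{f_i}(t) \|_{L^\infty(\br^3)} W_2^2\bigl(\rho_{f_1}(t), \rho_{f_2}(t)\bigr).
\ee
To apply these propositions one must check that the constant $C$ is uniform in $t \in [0,T]$. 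The assumption $\rho_{f_i} \in L^\infty([0,T]; L^\infty(\br^3))$ combined with mass conservation and energy control gives $\rho_{f_i} \in L^\infty([0,T]; L^1 \cap L^\infty(\br^3))$, whence Lemma~\ref{lem:Ubar} yields a uniform bound on $\| \bar U_{i,t} \|_{L^\infty}$, and Propositions~\ref{prop:variable-hatU-reg}--\ref{prop:fixed-hatU-reg} yield a uniform bound on $\| \widehat U_{i,t} \|_{L^\infty}$. Hence the constant in the stability estimate depends only on $g$ and on $\sup_{t\in[0,T]} (\|\rho_{f_i}\|_{L^1} + \|\rho_{f_i}\|_{L^\infty})$.

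Finally I would close the estimate by combining the two displays above with \eqref{D-ctrl-W}, which gives $W_2^2(\rho_{f_1}(t), \rho_{f_2}(t)) \leq D(t)$. Altogether this yields $I_4(t) \leq C\, D(t)$, as required. The proof is essentially immediate once the stability propositions from Section~\ref{sec:electric-stability} are in hand, so the only minor obstacle is confirming that the constants in those propositions remain uniform in time under the standing hypotheses, which is why the preparatory regularity estimates for $\bar U$ and $\widehat U$ were established in Section~\ref{sec:electric}.
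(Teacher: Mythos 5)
Your argument is correct and follows the paper's proof exactly: push forward via the characteristics to rewrite $I_4$ as an integral against $\rho_{f_2}(t)$, bound it by $\|\rho_{f_2}(t)\|_{L^\infty}\,\|\widehat E_{1,t}-\widehat E_{2,t}\|_{L^2}^2$, and conclude with Propositions~\ref{prop:stable variable}--\ref{prop:stable fixed} together with \eqref{D-ctrl-W}. Your additional remarks on the time-uniformity of the constants (via Lemma~\ref{lem:Ubar} and Propositions~\ref{prop:variable-hatU-reg}--\ref{prop:fixed-hatU-reg}) are a welcome, if implicit in the paper, clarification.
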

\begin{proof}
Using \eqref{pushforward2}, we deduce that
$$
I_4 (t) = \int_{\br^3} |\widehat E_{1,t}(x) - \widehat E_{2,t}(x)|^2 \rho _{f_2}(t,x) \di x  \leq \lVert \rho _{f_2}(t) \rVert_{L^{\infty}(\br^3)} \lVert \widehat E_{1,t} - \widehat E _{2,t} \rVert^2_{L^2(\br^3)},
$$
and we conclude by Propositions~\ref{prop:stable variable}-\ref{prop:stable fixed} and \eqref{D-ctrl-W}.
\end{proof}

\section{Moment Estimates} \label{sec:moments}

In this section, we turn to the existence of strong solutions.
We adapt the method of construction of solutions developed by Lions and Perthame \cite{Lions-Perthame} for the Vlasov-Poisson system for electrons. 
The key step is to prove an a priori estimate on the velocity moments of a solution. This is the content of the following proposition.

\begin{prop} \label{prop:moment-control}
Let $f_0 \in L^{\infty}(\br^3 \times \br^3)$, $f_0 \geq 0$, $\| f_0 \|_{L^1(\br^3 \times \br^3)} = 1$. Assume that $f_0$ also satisfies, for some $m_0 > 3$,
\be
M_{m_0}(0) : = \int_{\br^3 \times \br^3}|v|^{m_0} f_0(x,v) \di x \di v < + \infty .
\ee
Let $f$ be a solution of \eqref{eq:vpme-var} (resp. \eqref{eq:vpme-fixed})
such that for all $t$,
\be
\mc{E}_V[ f] (t) \leq C \qquad \text{(resp.  $\mc{E}_F[ f] (t) \leq C$)},
\ee
and satisfying
\be
\| f(t, \cdot ,\cdot) \|_{L^\infty(\br^3 \times \br^3)} \leq \| f_0 \|_{L^\infty(\br^3 \times \br^3)}, \qquad \int_{\br^3 \times \br^3} f(t,x,v) \di x \di v = \int_{\br^3 \times \br^3} f_0(x,v) \di x \di v = 1 .
\ee
Then, for all $k < m_0$,
\be
\int_{\br^3 \times \br^3} |v|^k f(t,x,v) \di x \di v \leq \exp{\left [C  \bigl ( 1 + \log(1+{M_k(0)}) \bigr ) \exp{(Ct)}\right ]},
\ee
for some constant $C$ depending only on $ \| f_0 \|_{L^\infty(\br^3 \times \br^3)}$ and $\mc{E}_V[ f_0]$ (resp. $\mc{E}_F[ f_0] $).
\end{prop}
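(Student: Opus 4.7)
The plan is to adapt the moment propagation argument of Lions and Perthame \cite{Lions-Perthame} to the ionic setting, using the decomposition $E = \bar E + \widehat E$ to isolate the singular contribution, which behaves as in the classical electron case, from the smoother correction $\widehat E$, which by Propositions~\ref{prop:variable-hatU-reg}--\ref{prop:fixed-hatU-reg} is uniformly bounded in $L^\infty(\br^3)$ by a constant depending only on the energy and $g$. The first step is to differentiate along the Vlasov flow to obtain
\be
\frac{d}{dt} M_k(t) = k \int_{\br^3 \times \br^3} |v|^{k-2}\, v \cdot E(t,x)\, f(t,x,v) \di x \di v,
\ee
and to split this integral as $I_{\bar E} + I_{\widehat E}$ according to the decomposition of $E$.

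For the smooth contribution $I_{\widehat E}$, I would invoke Lemma~\ref{energy-moment} to get $\rho_f \in L^\infty([0,T]; L^{5/3}(\br^3))$ from the energy control, then apply Proposition~\ref{prop:variable-hatU-reg} (resp.~\ref{prop:fixed-hatU-reg}) to produce a uniform bound $\|\widehat E(t)\|_{L^\infty(\br^3)} \leq C$. This yields
\be
|I_{\widehat E}| \leq C k \int |v|^{k-1} f \di x \di v \leq C (1 + M_k(t)),
\ee
which is harmless for the Gronwall argument.

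For the singular contribution $I_{\bar E}$, I would reproduce the Lions-Perthame estimate verbatim. The key inputs are: (i) the standard interpolation inequality $\|\rho_f\|_{L^{(k+3)/3}(\br^3)} \leq C \|f_0\|_{L^\infty}^{k/(k+3)} M_k(t)^{3/(k+3)}$, obtained by splitting the $v$-integral on $\{|v| \leq R\}$ and $\{|v| > R\}$ and optimising in $R$; (ii) the elliptic bounds of Lemma~\ref{lem:Ubar} converting $L^p$ control of $\rho_f$ into control of $\bar E$; and (iii) the Lions-Perthame characteristic argument, which expresses $\int |v|^{k-1}|\bar E(t,x)| f \di x \di v$ along the flow, uses the conservation of mass and energy, and yields after iteration a bound of the form
\be
|I_{\bar E}(t)| \leq C \bigl(1 + M_k(t)\bigr) \log\bigl(e + M_k(t)\bigr) + \text{lower order}.
\ee
The lower-order terms are integrable in $t$ thanks to the conservation of total energy and mass, as in Lions-Perthame's original argument.

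Combining the two estimates produces the differential inequality
\be
\frac{d}{dt}\bigl(1 + M_k(t)\bigr) \leq C \bigl(1 + M_k(t)\bigr) \log\bigl(e + M_k(t)\bigr),
\ee
from which the double exponential bound in the statement follows by a straightforward Gronwall-type integration on $\log\log$. The main obstacle is faithfully transcribing the Lions-Perthame iteration in the whole-space setting with the additional $\widehat E$ term present; the advantage of the decomposition is precisely that $\widehat E$ enters only through the elementary Step~2 estimate, so the delicate part of the argument is formally identical to \cite{Lions-Perthame} once the uniform $L^\infty$ bound on $\widehat E$ from Section~\ref{sec:electric} is available.
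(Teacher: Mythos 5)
Your overall strategy is the same as the paper's: decompose $E=\bar E+\widehat E$, use the uniform estimates on $\widehat E$ from Section~\ref{sec:electric} so that the smooth part contributes a harmless term, run the Lions--Perthame machinery on the singular part, and close with a logarithmic Gronwall inequality giving the double-exponential bound. However, two points in your write-up are genuine gaps rather than routine omissions. First, the claim that ``$\widehat E$ enters only through the elementary Step~2 estimate'' is not correct. The Lions--Perthame iteration controls $\|\bar E_t\|_{L^{k+3}(\br^3)}$ through the Duhamel representation $\rho_f(t,x)=-\div_x\int_0^t s\int_{\br^3} Ef(t-s,x-vs,v)\,\di v\,\di s+\int_{\br^3} f_0(x-vt,v)\,\di v$, and the field in the source term is the \emph{full} field $E=\bar E+\widehat E$, since it is the full field that drives the characteristics. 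So $\widehat E$ also enters the delicate part of the argument; what makes it harmless there is not the $L^\infty$ bound alone but the uniform $L^{\frac{3}{2},\infty}(\br^3)$ bound of Propositions~\ref{prop:variable-hatU-reg}--\ref{prop:fixed-hatU-reg}, which is what the long-time piece of the estimate uses (via the Lorentz--H\"older inequality, Lemma~\ref{lem:Lorentz-ineq}), together with $L^{\frac{3}{2},\infty}\cap L^\infty\subset L^r$ interpolation for the short-time piece.

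Second, the intermediate bound $|I_{\bar E}(t)|\le C\,(1+M_k(t))\log(e+M_k(t))$ is precisely the non-trivial content of the proof, and it does not follow from the three inputs you list. Combining the interpolation estimate of Lemma~\ref{lem:rho-interpolation} with the elliptic estimate of Lemma~\ref{lem:Ebar} only gives $\|\bar E_t\|_{L^{k+3}(\br^3)}\le C\,M_n(t)^{\frac{3}{n+3}}$ with $n$ determined by $k$, and inserting this into \eqref{Moment-Gronwall-1} produces a differential inequality with a superlinear power of $M_k$, which blows up in finite time and cannot yield the stated bound. The almost-linear estimate $\|E_t\|_{L^{k+3}(\br^3)}\le C\,(1+\log(1+M_k(t)))(1+M_k(t))^{\frac{1}{k+3}}$ is obtained only after splitting the Duhamel time integral at a time $t_0$ chosen as a negative power of $1+M_k(t)$, bounding the long-time piece by $C\,|\log t_0|\,M_k(t)^{\frac{1}{k+3}}$ and the short-time piece using \emph{initial} moments of order $m\in(k,m_0)$ --- this is where the strict hypothesis $k<m_0$ enters, which your sketch never uses. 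Deferring all of this to ``reproducing Lions--Perthame verbatim'' cannot be literal here because of the extra field in the source term, so a complete proof must at least specify which bounds on $\widehat E$ (namely $L^{\frac{3}{2},\infty}$ and $L^\infty$, uniformly in time) make the long-time and short-time steps go through, as the paper does in Appendix~A.
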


The proof of this proposition follows by the classical argument by Lions and Perthame \cite{Lions-Perthame}, simplified with the use of Lorentz spaces. Indeed, since our electric field can be split as the sum of the classical Vlasov-Poisson field plus a term uniformly bounded in $L^{\frac{3}{2},\infty}(\br^3)\cap L^\infty(\br^3),$ the proof in \cite{Lions-Perthame} can be adapted with some minor modifications.
For convenience of the interested reader, we detail the argument in Appendix A.

\appendix
\section{Proof of Proposition \ref{prop:moment-control}.}
The aim is to control the velocity moments $M_k$ by use of a Gronwall estimate, where
\be
M_k(t) : = \sup_{0 \leq s \leq t} \int_{\br^3 \times \br^3} |v|^k f(s,x,v) \di x \di v .
\ee
From \cite{Lions-Perthame}, by using the equation we can deduce the estimate
\be \label{Moment-Gronwall-1}
\frac{\di }{\di t} M_k(t) \leq C \| E_t \|_{L^{k+3}(\br^3)} M_k(t)^{\frac{k+2}{k+3}} .
\ee
It therefore remains to control $\| E_t \|_{L^{k+3}(\br^3)}$. We assume from now on, without loss of generality, that $k > 3$.

First, we note that the conservation of energy gives us uniform in time bounds on $\rho_f$ and therefore $E$. By Lemma~\ref{energy-moment} and conservation of mass, we have the uniform bounds
\be
\| \rho_f (t, \cdot) \|_{L^1(\br^3)} \equiv 1 , \qquad \sup_{t} \| \rho_f (t, \cdot) \|_{\frac{5}{3}(\br^3)} \leq C .
\ee
From the regularity estimates above, we deduce that we have uniform bounds on the electric field:
\begin{align}
\sup_t \| \bar E_t \|_{L^{\frac{3}{2}, \infty}(\br^3)} \leq C, & \qquad \sup_t \| \bar E_t \|_{L^{\frac{15}{4}}(\br^3)} \leq C \\
\sup_t \| \widehat E_t \|_{L^{\frac{3}{2}, \infty}(\br^3)} \leq C, & \qquad \sup_t \| \widehat E_t \|_{L^{\infty}(\br^3)} \leq C .
\end{align}

If $k+3 > \frac{15}{4}$, we require further estimates on $\| \bar E_t \|_{L^{k+3}(\br^3)}$. To do this, we will follow the strategy of \cite{Lions-Perthame}. 
We first note some preliminary estimates relating the $L^p(\br^3)$ norms of $\rho_f$ and $\bar E$ and similar quantities to moments of $f$.

\begin{lem} \label{lem:rho-interpolation}
For any $s,t \geq 0$ and $k \geq 0$,
\be
\left \| \int_{\br^3} f(s,\cdot-vt, v) \di v  \right \|_{L^{\frac{k+3}{3}}(\br^3)} \leq M_k(s)^{\frac{3}{k+3}} .
\ee
\end{lem}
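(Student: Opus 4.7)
The plan is to use the classical Lions-Perthame layer-cake argument in the velocity variable, exploiting that translation in $x$ by $vt$ at fixed $v$ leaves the $L^1_{x,v}$ norm of $f(s,\cdot,\cdot)$ invariant.

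Denote $\sigma(x) := \int_{\br^3} f(s, x - vt, v) \di v$ and $A(x) := \int_{\br^3} |v|^k f(s, x - vt, v) \di v$. For any $R > 0$, I would split the $v$-integration at $|v| = R$ and bound
\be
\sigma(x) \leq \|f_0\|_{L^\infty(\br^3\times\br^3)} \, \tfrac{4\pi}{3} R^3 + R^{-k} A(x),
\ee
using the a priori bound $\|f(s,\cdot,\cdot)\|_{L^\infty} \leq \|f_0\|_{L^\infty}$ on the low-velocity part and $|v|^k \geq R^k$ on the high-velocity part. Optimising pointwise in $R$, i.e. choosing $R^{k+3} \sim A(x)/\|f_0\|_{L^\infty}$, yields
\be
\sigma(x) \leq C \, \|f_0\|_{L^\infty(\br^3\times\br^3)}^{\frac{k}{k+3}} \, A(x)^{\frac{3}{k+3}}.
\ee

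Next, I would raise this pointwise inequality to the power $(k+3)/3$ and integrate in $x$. Using Fubini together with the change of variables $y = x - vt$ at each fixed $v$ (which has unit Jacobian), I obtain
\be
\int_{\br^3} A(x) \di x = \int_{\br^3\times\br^3} |v|^k f(s, y, v) \di y \di v \leq M_k(s),
\ee
where the last inequality is by the definition of $M_k$ as a supremum of moments. Combining the two estimates gives the stated bound
\be
\|\sigma\|_{L^{\frac{k+3}{3}}(\br^3)} \leq C \, \|f_0\|_{L^\infty(\br^3\times\br^3)}^{\frac{k}{k+3}} \, M_k(s)^{\frac{3}{k+3}},
\ee
which matches the conclusion of the lemma (with the implicit $\|f_0\|_{L^\infty}$-dependent constant absorbed, as $\|f_0\|_{L^\infty}$ is fixed throughout the moment propagation argument).

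There is no serious obstacle here; the only point that requires attention is the translation invariance under $y = x - vt$, which is what forces the exponent $(k+3)/3$ to come out correctly independent of $t$. The structure is exactly the same as in \cite{Lions-Perthame}, and the estimate will be used subsequently to control $\|\rho_f(s)\|_{L^{(k+3)/3}}$ at later times by moments at time $s$, which is the mechanism that closes the Gronwall inequality \eqref{Moment-Gronwall-1}.
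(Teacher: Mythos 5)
Your argument is correct and follows essentially the same route as the paper: split the velocity integral at $|v|=R$, bound the low-velocity part by $\|f\|_{L^\infty}R^3$ and the high-velocity part by $R^{-k}$ times the $k$-th moment integrand, optimise pointwise in $R$, then raise to the power $\frac{k+3}{3}$ and integrate in $x$ using the unit-Jacobian translation $y=x-vt$. Your explicit tracking of the $\|f_0\|_{L^\infty}^{k/(k+3)}$ constant (which the paper's statement silently absorbs, using $\|f(s)\|_{L^\infty}\leq\|f_0\|_{L^\infty}$) is the only cosmetic difference.
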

\begin{proof}
This is a standard interpolation argument. For any $R > 0$,
\be
\int_{\br^3} f(s, x-vt, v) \di v \leq R^{-k} \int_{|v| > R} |v|^k f(s, x-vt, v) \di v + \| f(s, \cdot, \cdot ) \|_{L^\infty(\br^3\times\br^3)} R^3 .
\ee
Optimising over $R$ gives
\be
\int_{\br^3} f(s, x-vt, v) \di v \leq C \left ( \int_{|v| > R} |v|^k f(s, x-vt, v) \di v \right )^{\frac{3}{k+3}} .
\ee

Then 
\be
\left \| \int_{\br^3} f(s,\cdot-vt, v) \di v  \right \|_{L^{\frac{k+3}{3}}(\br^3)} \leq \left ( \int_{\br^3} |v|^k f(s, x-vt, v)\di x \di v  \right )^{\frac{3}{k+3}} \leq M_k(s)^{\frac{3}{k+3}}.
\ee
\end{proof}

Using Lemma~\ref{lem:Ubar}, we deduce that control of moments implies integrability of $\bar E$.
\begin{lem} \label{lem:Ebar}
Let $n \in (0,6)$ and $q \in (\frac{3}{2}, + \infty)$ satisfy
\be
q = \frac{3}{6-n} \cdot (n+3) .
\ee
Then there exists a constant $C_q > 0$ such that, for all $t\ge 0,$
\be
\| \bar E_t \|_{L^q(\br^3)} \leq C_q M_n^{\frac{3}{n+3}}(t) .
\ee
\end{lem}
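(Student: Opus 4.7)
The plan is to combine two earlier results: the interpolation bound for $\rho_f$ from Lemma~\ref{lem:rho-interpolation} (specialised at $t=0$) and the elliptic regularity estimates on $\bar U$ from Lemma~\ref{lem:Ubar}. The first gives control of an $L^p$ norm of the density in terms of a velocity moment, and the second transfers that control to the gradient of the potential via the Newtonian convolution.

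More precisely, I would first observe that setting $t=0$ in the proof of Lemma~\ref{lem:rho-interpolation} (i.e., performing the standard interpolation between $\|f\|_{L^\infty}$ and the $k$-th velocity moment of $f$) yields
\be
\| \rho_f(s, \cdot) \|_{L^{(n+3)/3}(\br^3)} \leq C \, M_n(s)^{\frac{3}{n+3}},
\ee
where $C$ depends only on $\|f_0\|_{L^\infty(\br^3\times\br^3)}$ (using the $L^\infty$ bound on $f$ propagated by transport). Since $n \in (0,6)$, the exponent $p := \tfrac{n+3}{3}$ lies in the range $(1,3)$, which is exactly the hypothesis required to apply the second part of Lemma~\ref{lem:Ubar}.

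Next I would apply Lemma~\ref{lem:Ubar} with this value of $p$. The Sobolev-type relation there reads $\tfrac{1}{q} = \tfrac{1}{p} - \tfrac{1}{3}$, and a direct computation gives
\be
\frac{1}{q} = \frac{3}{n+3} - \frac{1}{3} = \frac{6-n}{3(n+3)}, \qquad \text{i.e.\ } q = \frac{3(n+3)}{6-n},
\ee
which coincides with the exponent in the statement and, as $n$ ranges over $(0,6)$, covers exactly $q \in (\tfrac32, +\infty)$. Lemma~\ref{lem:Ubar} then yields
\be
\| \bar E_t \|_{L^q(\br^3)} \leq C_q \, \| \rho_f(t) \|_{L^p(\br^3)} \leq C_q \, M_n(t)^{\frac{3}{n+3}},
\ee
which is the claimed bound.

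There is no real obstacle here: the only mild point of care is making sure the interpolation bound for $\rho_f$ is stated with a constant independent of $t$ (which it is, since $\|f(t,\cdot,\cdot)\|_{L^\infty}\leq \|f_0\|_{L^\infty}$ along the transport) and that the endpoint cases $n\to 0$ (giving $q\to 3/2$) and $n\to 6$ (giving $q\to \infty$) are excluded so that Lemma~\ref{lem:Ubar} applies in the strong-type form rather than the weak-type one. Both are guaranteed by the hypothesis $n\in(0,6)$.
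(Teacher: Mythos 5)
Your proposal is correct and follows the same route the paper intends: the paper presents Lemma~\ref{lem:Ebar} as an immediate consequence of the interpolation bound $\|\rho_f(t)\|_{L^{(n+3)/3}} \leq C M_n(t)^{3/(n+3)}$ (Lemma~\ref{lem:rho-interpolation} with zero shift) combined with the $\bar E$ estimate of Lemma~\ref{lem:Ubar} at $p=(n+3)/3 \in (1,3)$, which is exactly your computation of $q$. No gaps; the remarks about uniformity in $t$ via $\|f(t)\|_{L^\infty}\le\|f_0\|_{L^\infty}$ and the exclusion of the endpoint exponents match the paper's hypotheses.
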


The resulting estimate on $\| \bar E_t \|_{L^{k+3}}$ is not sufficient to allow us to obtain a long term estimate from the differential inequality \eqref{Moment-Gronwall-1}. The next step is to obtain an improved estimate on $\bar E$.
We start by obtaining a formula for $\rho_f$ by solving the equation along characteristics with $-E\cdot \na_v f$ as a source term. From a Duhamel representation of $f$, we deduce as in \cite{Lions-Perthame} that
\be
\rho_f(t,x) = - \div_x \int_0^t s \int_{\br^3} \left [ Ef(t-s, x-vs, v) \right ] \di v \di s + \int_{\br^3} f_0(x-vt,v) \di v .
\ee
Since $\bar E = \nabla \Delta^{-1} \rho_f$, by using Sobolev inequality and Calderon-Zygmund theory we deduce that
\be
\| \bar E_t \|_{L^{k+3}(\br^3)} \leq \left \| \int_0^t s \int_{\br^3} \left [ Ef(t-s, \cdot-vs, v) \right ] \di v \di s \right \|_{L^{k+3}(\br^3)} + \left \| \int_{\br^3} f_0(\cdot-vt,v) \di v \right \|_{L^{\frac{3(k+3)}{k+6}}(\br^3)}.
\ee

To estimate the term involving $f_0$, we use Lemma~\ref{lem:rho-interpolation} to deduce that
\be
\left \| \int_{\br^3} f_0(\cdot-vt,v) \di v \right \|_{L^{\frac{3(k+3)}{k+6}}(\br^3)} \leq C M_l(0)^{\frac{3}{l+3}},
\ee
where $l$ is chosen such that
\be
\frac{l+3}{3}  = \frac{3(k+3)}{k+6} = \frac{9}{k + 6} \cdot \frac{k+3}{3} .
\ee
Since we have assumed that $k > 3$, then $l < k$ and so $M_l(0)$ is controlled by $M_k(0)$.

To estimate the term involving $Ef$, we proceed as in \cite{Lions-Perthame}, and we split the time integral into a short time and a long time part:
\begin{multline}
\left \| \int_0^t s \int_{\br^3} \left [ Ef(t-s, \cdot-vs, v) \right ] \di v \di s \right \|_{L^{k+3}(\br^3)} \leq \left \| \int_0^{t_0} s \int_{\br^3} \left [ Ef(t-s, \cdot-vs, v) \right ] \di v \di s \right \|_{L^{k+3}(\br^3)} \\
+ \left \| \int_{t_0}^t s \int_{\br^3} \left [ Ef(t-s, \cdot-vs, v) \right ] \di v \di s \right \|_{L^{k+3}(\br^3)}.
\end{multline}
We complete the estimates on these terms in the following two subsections.

\subsection{Long Time Estimate}

In this subsection, we prove that
\be \label{Est:long-time}
\left \| \int_{t_0}^t s \int_{\br^3} \left [ Ef(t-s, \cdot-vs, v) \right ] \di v \di s \right \|_{L^{k+3}(\br^3)}  \leq C |\log t_0| \, M_k(t)^{\frac{1}{k+3}} .
\ee

We will require the following H\"{o}lder-like inequalities for the Lorentz spaces $L^{p,q}$ - see O'Neil \cite{ONeil} and Tartar \cite{Tartar}.
\begin{lem} \label{lem:Lorentz-ineq}
\begin{enumerate}[(i)]
\item Let $0 < p_1, p_2, p < \infty$, $0 < q_1, q_2, q \leq \infty$ satisfy $p^{-1} = p^{-1}_1 + p^{-1}_2$, $q^{-1} = q^{-1}_1 + q^{-1}_2$.
Then
\be
\| fg\|_{L^{p,q}} \leq C_{p_1, p_2, q_1, q_2} \, \| f\|_{L^{p_1,q_1}}  \| g\|_{L^{p_2,q_2}} ,
\ee
whenever the right hand side is finite.

\item Let $f \in L^1 \cap L^\infty$. Then, for any $p \in [1,\infty)$,
\be
\| f\|_{L^{p,1}} \leq C_p \| f \|^{\frac{1}{p}}_{L^1} \| f \|_{L^\infty}^{1 - \frac{1}{p}} .
\ee
\end{enumerate}

\noindent Consequently, we have the following estimate.
Let $p \in (1, \infty]$. Let $f \in L^{p, \infty}$ and $g \in L^1 \cap L^\infty$. Then
\be
\| fg \|_{L^1} \leq C_{p} \, \| f \|_{L^{p, \infty}} \| g \|_{L^1}^{1 - \frac{1}{p}} \| g \|_{L^\infty}^{\frac{1}{p}} .
\ee
\end{lem}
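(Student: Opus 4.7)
The two parts (i) and (ii) are the standard Hölder and interpolation inequalities for Lorentz spaces, due respectively to O'Neil \cite{ONeil} and Tartar \cite{Tartar}, so I would invoke them directly rather than reprove them. The only substantive content is the ``consequently'' estimate, which I would derive as a short combination of (i) and (ii).

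The plan is as follows. Let $p \in (1, \infty]$ and let $p' \in [1, \infty)$ be its conjugate exponent, so that $p^{-1} + (p')^{-1} = 1$. Since $L^{1,1}(\br^3) = L^1(\br^3)$ with equivalent norms, it suffices to bound $\|fg\|_{L^{1,1}}$. I would apply part (i) with the choice $p_1 = p$, $q_1 = \infty$, $p_2 = p'$, $q_2 = 1$ (so that $p^{-1} + (p')^{-1} = 1$ and $\infty^{-1} + 1^{-1} = 1$), which yields
\be
\| fg\|_{L^1(\br^3)} \leq C_p \, \|f\|_{L^{p,\infty}(\br^3)} \, \|g\|_{L^{p',1}(\br^3)} .
\ee
Since $g \in L^1 \cap L^\infty$, I would then apply part (ii) with exponent $p' \in [1,\infty)$ to obtain
\be
\|g\|_{L^{p',1}(\br^3)} \leq C_{p'} \, \|g\|_{L^1(\br^3)}^{1/p'} \, \|g\|_{L^\infty(\br^3)}^{1 - 1/p'} = C_{p'} \, \|g\|_{L^1(\br^3)}^{1 - 1/p} \, \|g\|_{L^\infty(\br^3)}^{1/p} .
\ee
Chaining these two bounds gives the claimed estimate. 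A minor care point is the endpoint $p = \infty$, where $p' = 1$ and $L^{p',1} = L^{1,1} = L^1$, so the interpolation step becomes trivial and the estimate reduces to the usual $L^\infty$--$L^1$ Hölder inequality; this should be handled as a separate remark.

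There is no real obstacle here: the content is entirely classical, and the only thing to watch is the bookkeeping of exponents (that the Lorentz indices $q_1 = \infty$ and $q_2 = 1$ combine correctly to $q = 1$, and that $1/p' = 1 - 1/p$). No additional structural assumption on $f$ or $g$ beyond those stated is used.
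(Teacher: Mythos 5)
Your derivation is correct and is exactly the intended argument: the paper states (i) and (ii) as imported facts from O'Neil and Tartar and obtains the ``consequently'' estimate precisely by the duality-type application of (i) with $(p_1,q_1)=(p,\infty)$, $(p_2,q_2)=(p',1)$ followed by (ii) applied to $g$ with exponent $p'$. Your separate remark for the endpoint $p=\infty$ (where the estimate reduces to the trivial $L^\infty$--$L^1$ H\"older bound) is a reasonable bit of care and does not change the argument.
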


Using this, we prove \eqref{Est:long-time}.
By Minkowski's inequality,
\be \label{time-out}
\left \| \int_{t_0}^t s \int_{\br^3} \left [ Ef(t-s, \cdot-vs, v) \right ] \di v \di s \right \|_{L^{k+3}(\br^3)} \leq \int_{t_0}^t s \left \| \int_{\br^3} \left [ Ef(t-s, \cdot-vs, v) \right ] \di v \right \|_{L^{k+3}(\br^3)} \di s.
\ee
By Lemma~\ref{lem:Lorentz-ineq}, we have the following estimate:
\be \label{Holder-Lorentz}
\int_{\br^3} \left [ Ef(t-s, x-vs, v) \right ] \di v \leq \| E(t-s, x-s \cdot) \|_{L^{\frac{3}{2}, \infty}(\br^3)} \, \|f\|_{L^\infty(\br^3)}^{\frac{2}{3}} \left | \int_{\br^3} f(t-s, x-vs, v) \di v \right |^{\frac{1}{3}} .
\ee
By Lemma \ref{lem:Ubar}, and Propositions \ref{prop:variable-hatU-reg}, \ref{prop:fixed-hatU-reg}, $E$ is bounded in $L^{\frac{3}{2}, \infty}$, uniformly in time.
Thus
\be
\left \| \int_{t_0}^t s \int_{\br^3} \left [ Ef(t-s, \cdot-vs, v) \right ] \di v \di s \right \|_{L^{k+3}(\br^3)} \leq C \int_{t_0}^t s^{-1} \left ( \int_{\br^3 \times \br^3}  \left |  f(t-s, x-vs, v) \right |^{\frac{k+3}{3}} \di x \di v \right )^{\frac{1}{k+3}} \di s ,
\ee
where $C>0$ depends only on the initial datum.

By Lemma~\ref{lem:rho-interpolation},
\be
\left ( \int_{\br^3 \times \br^3}  \left |  f(t-s, x-vs, v) \right |^{\frac{k+3}{3}} \di x \di v \right )^{\frac{1}{k+3}} \leq M_k(t-s)^{\frac{1}{k+3}}\leq M_k(t)^{\frac{1}{k+3}},
\ee
since $s>0.$ Therefore \eqref{Est:long-time} follows.

\subsection{Short Time Estimate}

In this subsection we show that
\be
\left \| \int_0^{t_0} s \int_{\br^3} \left [ Ef(t-s, x-vs, v) \right ] \di v \di s \right \|_{L^{k+3}} \leq C t_0^{2-\frac{3}{r}} \left [ M_m(0)^{\frac{1}{k+3}} + \left ( 1 + M_k(t)^{\delta}\right ) \right ] ,
\ee
where
\be
\delta = \frac{3(m+3)}{(k+3)^2} .
\ee

By Minkowski's inequality,
\be \label{time-out-2}
\left \| \int_{0}^{t_0} s \int_{\br^3} \left [ Ef(t-s, \cdot-vs, v) \right ] \di v \di s \right \|_{L^{k+3}(\br^3)} \leq \int_0^{t_0} s \left \| \int_{\br^3} \left [ Ef(t-s, \cdot-vs, v) \right ] \di v \right \|_{L^{k+3}(\br^3)} \di s.
\ee
By H{\"{o}}lder's inequality, for any $r > \frac{3}{2}$ we obtain
\begin{align}
\int_{\br^3} \left [ Ef(t-s, x-vs, v) \right ] \di v & \leq \left (\int_{\br^3} |E(t-s, x-vs)|^r  \di v \right )^{1/r} \, \|f\|_{L^\infty(\br^3)}^{\frac{1}{r}} \left | \int_{\br^3} f(t-s, x-vs, v) \di v \right |^{1 - \frac{1}{r}} \\
& \leq s^{-\frac{3}{r}} \| E_{t-s}\|_{L^r(\br^3)} \, \|f\|_{L^\infty(\br^3)}^{\frac{1}{r}} \left | \int_{\br^3} f(t-s, x-vs, v) \di v \right |^{1 - \frac{1}{r}} . \label{Holder}
\end{align}
Thanks to Lemma \ref{lem:Ubar}, and Propositions \ref{prop:variable-hatU-reg}, \ref{prop:fixed-hatU-reg} we have
\be
\bar E_t \in L^{\frac{3}{2}, \infty} \cap L^{\frac{15}{4}}(\br^3), \qquad \widehat E_t \in L^{\frac{3}{2}, \infty} \cap L^\infty(\br^3),
\ee
with uniform in time estimates depending only on $M_2(0)$. We therefore choose $r \in ( \frac{3}{2}, \frac{15}{4})$ and obtain
\be
\left \| \int_0^{t_0} s \int_{\br^3} \left [  Ef(t-s, \cdot-vs, v) \right ] \di v \di s \right \|_{L^{k+3}(\br^3)}  \leq C \int_0^{t_0} s^{1 - \frac{3}{r}} \left \| \int_{\br^3} f(t-s, \cdot - vs, v) \di v \right \|^{\frac{1}{r'}}_{L^{\frac{k+3}{r'}}(\br^3)} \di s ,
\ee
where $r'$ satisfies $1/r + 1/r' = 1$, and the constant $C>0$ depends only on the initial datum.

To control the density term, we use Lemma~\ref{lem:rho-interpolation} with a moment of higher order than $k$. Choose $m \in (k, m_0)$ such that
\be
\frac{m+3}{3} = \frac{k+3}{r'} .
\ee
Then
\be
\int_0^{t_0} s^{1 - \frac{3}{r}} \left \| \int_{\br^3} f(t-s, \cdot - vs, v) \di v \right \|^{\frac{1}{r'}}_{L^{\frac{k+3}{r'}}(\br^3)} \di s  \leq C \int_0^{t_0} s^{1 - \frac{3}{r}} \di s \, M_m(t_0)^{\frac{1}{k+3}} \leq C t_0^{2-\frac{3}{r}} \, M_m(t_0)^{\frac{1}{k+3}}.
\ee

We control $M_m$ by using \eqref{Moment-Gronwall-1}, which implies that for all $t \geq 0$,
\be
M_m(t) \leq C \left ( M_m(0) + \left ( t \sup_{s \leq t} \| E_s \|_{L^{m+3}(\br^3)} \right )^{m+3} \right ).
\ee
$\| \widehat E_s\|_{L^{m+3}(\br^3)}$ is uniformly bounded by Lemma~\ref{lem:hatU-reg}.
For $\bar E_s$, we use Lemma~\ref{lem:Ebar} to obtain
\be
\| \bar E_s \|_{L^{m+3}(\br^3)} \leq M_n^{\frac{3}{n+3}}(s) ,
\ee
where $n=n_m \in (0,6)$ is related to $m$ via the formula
\be
m+3 = \frac{3}{6-n} \cdot (n+3) .
\ee

We now aim to control $M_n$ by $M_k$.
Note that if $n > 3$ then $n < m$. Recall that $m$ depends on $r$ and $k$, and that $m \searrow k$ as $r \searrow 3/2$.
As $m \searrow k > 3$ by assumption, $n_m\searrow n_k < k$. Therefore, by choosing $r$ sufficiently close to $3/2$, we can ensure that $n_m \leq k < m$. Then, since $M_n\leq M_k^{\frac{n}{k}}$ (by H\"older inequality), for $s \leq t$ we have
\be
\| \bar E_s \|_{L^{m+3}(\br^3)}^{m+3} \leq M_n(t)^{\frac{3(m+3)}{n+3}}\leq M_k(t)^{\frac{3n(m+3)}{k(n+3)}}
\leq (1+M_k(t))^{\frac{3n(m+3)}{k(n+3)}}\leq (1+M_k(t))^{\frac{3(m+3)}{k+3}}.
\ee
Thus
\be
M_m(t_0) \leq C \left [ M_m(0) + \left [ t_0 \left ( 1 + M_k(t)^{\frac{3}{k+3}}\right ) \right ]^{m+3} \right ].
\ee

Then, for $t_0\leq1$,
\be
\left \| \int_0^{t_0} s \int_{\br^3} \left [ Ef(t-s, x-vs, v) \right ] \di v \di s \right \|_{L^{k+3}} \leq C t_0^{2-\frac{3}{r}} \left [ M_m(0)^{\frac{1}{k+3}} + \left ( 1 + M_k(t)^{\delta}\right ) \right ] ,
\ee
where
\be
\delta = \frac{3(m+3)}{(k+3)^2} .
\ee

\subsection{Full Estimate}

Closing the estimate is identical to \cite{Lions-Perthame}. Choosing $t_0 = (1+M_k(t))^{-\frac{\delta r}{2r-3}}$, and combining all the previous estimates, gives a bound of the form
\be
 \| E_t \|_{L^{k+3}(\br^3)} \leq C ( 1 + \log{(1+M_k(t))}) (1+M_k(t))^{\frac{1}{k+3}}.
\ee
Thus, recalling \eqref{Moment-Gronwall-1}, one obtains
\be
\frac{\di}{\di t} M_k(t) \leq  C \bigl( 1 + |\log{1+M_k(t)}|\bigr) \, (1+M_k(t)) ,
\ee
which completes the proof of Proposition~\ref{prop:moment-control}.

\end{document}